\newtheorem{defi}{Definition}[section]
\newtheorem{cor}{Corollary}[section]
\newtheorem{lemma}[cor]{Lemma}
\newtheorem{teo}[cor]{Theorem}
\newtheorem{rem}[cor]{Remark}
\newtheorem{ass}[cor]{Assumption}
\newcommand{\R}{\mathbb{R}}
\newcommand{\N}{\mathbb{N}}
\newcommand{\abVal}[1]{|#1|}
\newcommand{\norm}[1]{\|#1\|}
\newcommand{\im}{\mathrm{im}}
\newcommand{\DZweiTauNu}[1]{D^2\varphi^{\circ}(\nu_{\ast}^{#1}(1))\tau_{\ast}^{#1}(1)\cdot\tau_{\ast}^{#1}(1)}
\newcommand{\varphiDZweiNuTauTau}[2]{\varphi^{\circ}(\nu^{#1}_{\ast})(D^2\varphi^{\circ}(\nu_{\ast}^{#1})\tau_{\ast}^{#1}\cdot\tau_{\ast}^{#2})}
\newcommand{\DZweiNuTauTau}[2]{(D^2\varphi^{\circ}(\nu_{\ast}^{#1})\tau_{\ast}^{#1}\cdot\tau_{\ast}^{#2})}
\newcommand{\Id}{\mathrm{Id}}
\pgfplotsset{compat=newest}
\numberwithin{equation}{section}
\title{Stability analysis for the anisotropic curve shortening flow of planar networks}
\author{
Michael G\"o\ss{}wein\thanks{Fakult\"at f\"ur Mathematik, Universit\"at Duisburg-Essen,
Thea-Leymann-Stra\ss e 9,
45127 Essen, Germany, \url{michael.goesswein@uni-due.de}},
Matteo Novaga\thanks{Dipartimento di Mathematica, Universit\`{a} di Pisa, Largo Bruno Pontecorvo 5, 56127 Pisa, Italy, \url{matteo.novaga@unipi.it}}~ and 
Paola Pozzi\thanks{Fakult\"at f\"ur Mathematik, Universit\"at Duisburg-Essen,
Thea-Leymann-Stra\ss e 9,
45127 Essen, Germany, \url{paola.pozzi@uni-due.de}}
}
\begin{document}
 \setlength{\parindent}{0em} 
\maketitle

\begin{abstract}
In this article we study the anisotropic curve shortening flow for a planar network of three curves with fixed endpoints and which meet in a triple junction. We show that the anisotropic curvature energy fulfills a \L ojasiewicz-Simon gradient inequality and use this knowledge to derive stability results for the flow. Precisely, in our main theorem we show that for any initial data, which are $C^{2+\alpha}$-close to a (local) energy minimizer, the flow exists globally and converges to a possibly different energy minimum.
\end{abstract}

\bigskip

\noindent \textbf{Keywords:} Anisotropic curve shortening flow; planar networks; \L ojasiewicz-Simon gradient inequality; geometric evolution equation.\\
\bigskip
\noindent \textbf{MSC(2020):} 53E10, 53A04, 35A01, 46N20. 
 
 
\section{Introduction}
For a regular, immersed curve $\Gamma\subset\R^2$ the length of $\Gamma$ is given by
\begin{align}
	E(\Gamma)=\int_{\Gamma}1\,ds,
\end{align}
where $s$ is the arc length parameter. The corresponding $L^2$-gradient flow is the so called curve shortening flow, which is also known as mean curvature flow in the case of higher dimensional surfaces. Originally the flow was suggested by Mullins \cite{MullinsIdealizedGrainBoundaries} to model the evolution of grain boundaries of heated polycrystalls. Afterwards the flow received a lot of interest both by mathematicians and physicists. Now, in material sciences it is very natural that the surface energy also depends on anisotropic effects. Then the energy above transforms to
\begin{align}\label{EquationIntroductionAnisoEnergy}
	E(\Gamma)=\int_{\Gamma}\varphi^{\circ}(\nu)ds,
\end{align}
where $\varphi^{\circ}: \R^2\to [0,\infty)$ is an anisotropy and $\nu$ the unit normal of $\Gamma$. The corresponding $L^2$-gradient flow induces an anisotropic curve shortening  
flow and in the last thirty years a lot of research on it was done, see for example \cite{TaylorMotionCurvesCrystallineCurvature, GageEvolvingPlaneCurvesByCurvatureInRelativeGeometries, BellettiniPaoliniAnisotropicMotionMC, ZhuAsymptoticBehaviorAnisoCurveFlows, CasellesChambolleAnisoCurvatureDrivenFlowConvexSets, PozziAnisoCurveShortFlowHigherCodim, PozziOnGradientFlowAnisoAreaFunctional, MercierNovagaPozziAnisotropicCurvatureFlowImmersedCurves, DeckelnickNuernbergUnconditonallyStableFiniteElementScheme} and references therein. Note that, from a mathematical point of view, the anisotropic curve shortening flow is also  a natural generalization of the curve shortening flow when considering Finsler spaces, see e.g. \cite{BellettiniPaoliniAnisotropicMotionMC}.

In this article we consider this energy on a network of three regular, immersed curves $\Gamma^1, \Gamma^2, \Gamma^3$ with fixed distinct endpoints $P^1, P^2, P^3$, and  meeting in a common triple junction $\Sigma$. This geometry is pictured in Figure \ref{FigureGeometricSituation}. 
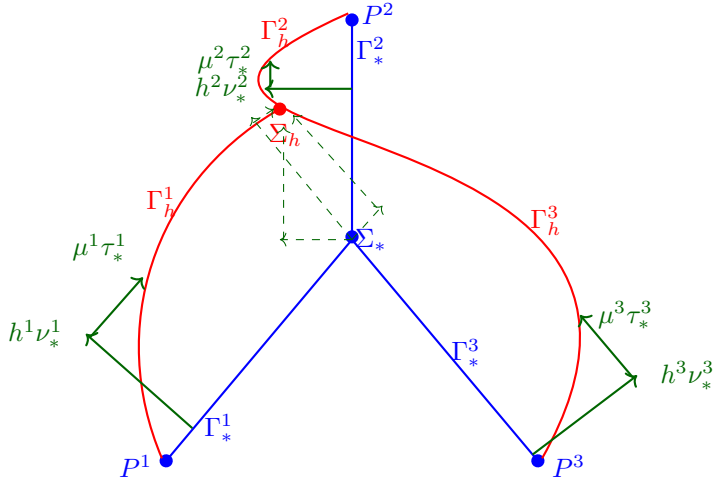
\begin{figure}\label{FigureGeometricSituation}
	\centering
	\begin{tikzpicture}[scale=0.5]
		\node[color=blue] at (0.5, 0.0) {$\Sigma_\ast$};
		\draw[color=blue, Circle-Circle, thick] (0, 6) node [right] {$P^2$} -- (0, -0.1);
		\node[color=blue] at (0.5, 5) {$\Gamma^2_\ast$};
		\draw[color=blue, Circle-, thick] (-5, -6) node [left] {$P^1$} -- (0, 0);
		\node[color=blue] at (3, -3) {$\Gamma^3_\ast$};
		\draw[color=blue, Circle-, thick] (5, -6) node [right] {$P^3$} -- (0, 0);
		\node[color=blue] at (-3.5, -5) {$\Gamma^1_\ast$};
		
		\draw[color=red, thick] (5, -5.8) .. controls (10.5, 4) and (-9, 2) .. (-0.1, 6);		
		\node[color=red] at (5.1, 0.5) {$\Gamma_h^3$};
		\node[color=red] at (-2, 5.5) {$\Gamma_h^2$};
		\draw[color=red, thick, -Circle] (-5, -5.8) to [bend left=42] (-1.75, 3.55) node [below=3pt] {$\Sigma_h$};
		\node[color=red] at (-5, 1) {$\Gamma_h^1$};

		\draw[color=green!40!black, thick, ->] (-4.2, -5) -- (-7, -2.5) node [left=5pt] {$h^1 \nu^1_\ast$};
		\draw[color=green!40!black, thick, ->] (-6.9, -2.6) -- (-5.5, -1) node [above left=3pt] {$\mu^1 \tau^1_\ast$};
		\draw[color=green!40!black, thick, ->] (4.75, -5.7) -- (7.5, -3.6) node [right=5pt] {$h^3 \nu^3_\ast$};
		\draw[color=green!40!black, thick, ->] (7.4, -3.67) -- (6, -2) node [right=3pt] {$\mu^3 \tau^3_\ast$};
		\draw[color=green!40!black, thick, ->] (0, 4) -- (-2.3, 4) node [left=2pt] {$h^2 \nu^2_\ast$};
		\draw[color=green!40!black, thick, ->] (-2.15, 4.04) -- (-2.15, 4.75) node [left=3pt] {$\mu^2 \tau^2_\ast$};
		

		\draw[color=green!40!black, opacity=.5, dashed, ->] (0, 0) -- (-1.9, 0) node [below=2pt] {}; 
		\draw[color=green!40!black, opacity=.5, dashed, ->] (-1.8, 0) -- (-1.8, 3); 
		\draw[color=green!40!black, opacity=.5, dashed, ->] (0, 0) -- (0.8, 0.9) node [right] {}; 
		\draw[color=green!40!black, opacity=.5, dashed, ->] (0.65, 0.85) -- (-1.55, 3.3); 
		\draw[color=green!40!black, opacity=.5, dashed, ->] (0, 0) -- (-2.65, 3.25) node [left] {}; 
		\draw[color=green!40!black, opacity=.5, dashed, ->] (-2.6, 3.1) -- (-2.05, 3.47); 
	\end{tikzpicture} 
	\caption{This graphic illustrates the geometric situation we consider in this article. Both the blue and the red lines have the form of the considered network consisting of three immersed curves. Hereby, the blue configuration illustrates the shape of an energy minimum of the anisotropic length energy and the red configuration the moving geometry itself. We will track the evolution of the latter by writing it as a kind of graph over the minimum configuration. This leads then to the green arrows. We will explain this in details in Section \ref{SubsectionParametrization} and the whole notation in Section \ref{SectionNotation}. }
\end{figure}

The study of the geometric evolution of networks received a lot of attention in the last years as it can be used as a model for many applications (see for instance \cite{MaNoPlSc16}, \cite{Kagaya} and references given in there). The network we consider is one of the typical minimal configurations, which appears often as subnetwork in more complicated situations. We denote the whole network by $\Gamma$ and compute its anisotropic length by

\begin{align}\label{EquationAnisoEnergyOnNetworks}
	E(\Gamma)=\sum_{i=1}^3\int_{\Gamma^i}\varphi^{\circ}(\nu^{i})ds^i,
\end{align}

where $s^i$ denotes the arc length parameter of the $i$-th curve. Now, let $\gamma^i, i=1,2,3,$ be regular parametrizations of $\Gamma^i$ such that $\gamma^i(0)=P^i, \gamma^i(1)=\Sigma, i=1,2,3$. Furthermore, let $\zeta^i: [0,1]\to\R^2, i=1,2,3,$ be smooth functions with $\zeta^i(0)=0, i=1,2,3,$ and $\zeta^i(1)=\zeta^j(1), i,j\in\{1,2,3\}$. For variations of type $\Gamma^i_{\varepsilon}=\im(\gamma^i+\varepsilon\zeta^i), i=1,2,3,$ we obtain that

\begin{align}\label{EquationVariationAnisotropicLength}
\frac{d}{d\varepsilon}\Big|_{\varepsilon=0} E(\Gamma_{\varepsilon})=-\sum_{i=1}^3\int_0^1(D^2\varphi^{\circ}(\nu^i)\tau^i\cdot\tau^i)\kappa^i\nu^i\cdot\zeta^i ds^i-\sum_{i=1}^3D\varphi^{\circ}(\nu^i(1))\cdot (\zeta^i(1))^{\bot},
\end{align}

where $\bot$ denotes the anticlockwise rotation by 90 degree. From \eqref{EquationVariationAnisotropicLength} one can derive a $L^2$-gradient flow with natural boundary conditions, which we will give in detail in Section \ref{SectionGeometricFlow} and which we will study more in detail in this work. Note that typically a factor simulating the mobility of the curves 
is included in the system, see e.g. \cite{AngenentGurtinMultiphaseThermomechanicsII}.

This article is a continuation of the work presented in \cite{KroenerNovagaPozziAnisoCurvFloImmeredNetworks}, 
 where both short time existence for motion by anisotropic curvature of the network $\Gamma$ and the behavior at the maximal existence time are studied: in particular there the authors show that if the maximal time of existence of the flow is finite, then either the length of one of the curves goes to zero or the $L^{2}$-norm of the anisotropic curvature blows up. In this paper we show stability of local energy minima of \eqref{EquationAnisoEnergyOnNetworks}. Precisely, we show that the flow of any network, that is $C^{2+\alpha}$-close to a local energy minimum, exists globally in time and converges to a possible different local energy minimum with the same energy. Our main result is given in detail in Theorem \ref{TheoremStabilityAnMCF}. 
In Remark~\ref{rem5.3} we point out that due to our elementary network setting and exploiting the assumptions on the anisotropy map $\varphi^{\circ}$,  the energy landscape for our functional is quite simple, in the sense that if a local minimum for $E$ is non-degenerate (i.e. it has a triple junction point that is distinct from $P_{i}$, $i=1,2,3$)  then it is in fact the unique minimum for $E$,  to which  the flow converges  for  sufficiently close initial data.
 Analogous versions of our main result restricted to  the isotropic setting are presented in \cite{KL2001} and \cite{PludaPozzetaLojasiewiczsimon}.

Our analysis relies mainly on the application of the so called \L ojasiewicz-Simon gradient inequality. Chill \cite{Chill2003OnTheLojasiewiczSimon} established a very general framework to prove such an inequality. To make the application easier we rely on a simplified framework from \cite{FeehannMaridakisLojasiewiczSimon}. This technique has been applied successfully to many other geometric flows, e.g. 
\cite{ChillFasangovaSchaetzleWillmoreBlowupsNeverCompact, AcquaPozziSpenerLSIOpenElasticCurve, GarckeGoessweinNonlinearStabilityDoubleBubbleSDF, RuppSpenerExistenceElasticFlowClampedCurves, PludaPozzetaLojasiewiczsimon}.  
Indeed, our result and strategy is very similar to \cite{PludaPozzetaLojasiewiczsimon}. In \cite{PludaPozzetaLojasiewiczsimon} the authors restrict to the (isotropic) curve shortening  flow while our article generalizes to the anisotropic  setting. In particular the verification of the prerequisites of the \L ojasiewicz-Simon gradient inequality is more technical due to the anisotropic angle conditions. On the other hand \cite{PludaPozzetaLojasiewiczsimon} study general network situations. Concerning the proofs we want to note that we use a different and elegant approach in the construction of a suitable graph parametrizations of the evolving geometry. The proof of Lemma \ref{LemmaExistenceRefFramGraphPara} works directly on the function spaces while the construction in \cite[Proposition 3.4]{PludaPozzetaLojasiewiczsimon} is done pointwise. 

The paper is organized as follows. In Section \ref{SectionPrelim} we will first clarify the notation, then give some facts about anisotropies and finally sum up the needed results from \cite{KroenerNovagaPozziAnisoCurvFloImmeredNetworks}. In Section \ref{SectionParaAndLoja} we will first introduce a way to track the evolution of our network as graph over a reference configuration and show that it indeed is possible to parametrize all networks that are close enough in a certain sense (Section \ref{SubsectionParametrization}). Then we will calculate the first and second variation of the anisotropic length energy (Section \ref{SectionVariations}) and use these results to prove a \L ojasiewicz-Simon gradient inequality (Section \ref{SectionLoja}).
In Section \ref{SectionAnalysisH} we verify an existence and smoothing result if our flow is written in the graph form from Section \ref{SectionParaAndLoja}.
 Finally, we will carry out the stability analysis and prove our main result Theorem~\ref{TheoremStabilityAnMCF} in Section~\ref{SectionStabAna}.

\section{Preliminaries}\label{SectionPrelim}
\subsection{Notation}\label{SectionNotation}
We will consider three curves $\Gamma^1(t),\Gamma^2(t)$ and $\Gamma^3(t)$ in $\R^2$ meeting in a common triple junction $\Sigma(t)$ and having fixed outer boundary points. The time evolving parametrizations of these curves over the interval $[0,1]$ will be denoted by $\gamma^i$. We will use $x$ as space and $t$ as time parameter. The parametrizations are chosen such that $\gamma^i(t,1)=\Sigma(t)$ for $i=1,2,3, t\ge 0$. In general, the upper index will always refer to the specific curve we are on. We will omit the upper index, if it is either clear on which curve we are or to refer to the whole geometry. Furthermore, we use the typical geometric quantities,  such as  the unit tangential vectors $\tau^i$, the unit normal vectors $\nu^i$, the curvature $\kappa^i$ and the arc length parameter $s^i$. Hereby, the unit normal vectors arise from the unit tangential vectors by anticlockwise $90^\circ$-rotation. Such rotation will be denoted by upper index $\bot$. For $(i,j,k)=(1,2,3), (2,3,1), (3,1,2)$ we will denote by $\theta^k(t)$ the angle between $\nu^i(t,1)$ and $\nu^j(t,1)$ at the triple junction. 
Note that these angles might be not constant in time. In case $\theta^k(t) \in (0, \pi)$, $k=1,2,3$, they are related by $\theta^{1}(t)+ \theta^{2}(t)+ \theta^{3}(t) =2\pi$ and suitable $\tilde{\alpha}^i(t)\in\R^+, i=1,2,3,$ fulfilling,
\begin{align}\label{EquationYoungsModulus}
	\frac{\sin\theta^1(t,1)}{\tilde{\alpha}^1(t)}=\frac{\sin\theta^2(t,1)}{\tilde{\alpha}^2(t)}=\frac{\sin\theta^3(t,1)}{\tilde{\alpha}^3(t)},
\end{align}
which can be shown to be equivalent to
\begin{align}\label{EquationForceBalanceTripleJunction}
	\sum_{i=1}^3\tilde{\alpha}^i(t)\nu^i(t,1)=0.
\end{align}
A proof of the equivalence of \eqref{EquationYoungsModulus} and \eqref{EquationForceBalanceTripleJunction} can be found in Lemma \ref{LemmaEquivalenceForceBalanceYoungModulus}.
 Note that \eqref{EquationYoungsModulus}
 typically appears from Young's modulus, which is a force balance at the triple junction, see, e.g., \cite{BronsardReitichOnThreePhaseBoundaryMotion} for a mathematical motivation and \cite{SMITHCyrilStanleyGrainsPhasesInterfacesInterpretationofMicrostructures} for a physical modelling. There, the $\tilde{\alpha}^i$ are related to the different energy densities of the surfaces. Furthermore, note that our assumption on the $\theta^i$ implies that all $\tilde{\alpha}^i$ have the same sign. So we may indeed choose them all positive.

 For the definition of the anisotropic flow we will denote the Wulff-shape inducing anisotropy by $\varphi$ and the corresponding polar norm by $\varphi^{\circ}$. Then $E(\Gamma)$ is the corresponding anisotropic curvature energy given by
\begin{align*}
	E(\Gamma)=\sum_{i=1}^3 \int_0^1 \varphi^{\circ}(\nu^i)ds^i.
\end{align*} 
In the main part of the paper we will track the evolution of the considered curves as graph over a given energy minimum $\Gamma_{\ast}$. Hereby, $h$ will be referring to the normal and $\mu$ to the tangential part of this parametrization. Additionally, lower index $h$ will refer to the evaluation of a specific quantity for the resulting curve due to this choice of $h$. For example, $\Gamma_h^i$ will be the $i$-th curve parameterized using $h$ as normal part and $\kappa_h^i$ will refer to the curvature on $\Gamma_h^i$. With some abuse of notation $E(h)$ will denote $E(\Gamma_h)$. Finally, a lower index $\ast$ will refer to a quantity in the reference geometry. For example, $\kappa_{\ast}$ will denote the curvature operator of $\Gamma_{\ast}$, $ds_{\ast}$ the length element on $\Gamma_{\ast}$, and so on.

\subsection{The geometric flow and known results}\label{SectionGeometricFlow}
In this section we will give a precise definition of the anisotropic curve shortening flow we consider. Additionally, for the reader's convenience, we will state known results from \cite{KroenerNovagaPozziAnisoCurvFloImmeredNetworks} concerning anisotropies and our flow. Note that we are being rather concise here. A more detailed introduction can be found in \cite[Section 2]{KroenerNovagaPozziAnisoCurvFloImmeredNetworks}.

We will begin with some basic definitions and properties of anisotropies, see for example \cite{BellettiniPaoliniAnisotropicMotionMC}.
\begin{defi}[Anisotropy]\
	\begin{itemize}
		\item[i.)] An anisotropy is a norm $\varphi:\R^2\to [0,\infty)$. We say that $\varphi$ is smooth if $\varphi\in C^{\infty}(\R^2\backslash\{0\})$ and $\varphi$ is elliptic if $\varphi^2$ is uniformly convex, i.e., there exists $C>0$ such that 
		\begin{align}\label{EquationEllipticAniso}
		D^2(\varphi^2)\ge C\, \Id	
		\end{align}
	in the distributional sense.
		\item[ii.)] The set $W_{\varphi}:=\{\varphi\le 0\}$ is called Wulff shape. We say that $\varphi$ is crystalline if $W_{\varphi}$ is a polygon.
		\item[iii.)] For an anisotropy $\varphi$ we introduce the polar norm $\varphi^{\circ}$ relative to $\varphi$ by
		\begin{align}\label{EquationPolarNorm}
			\varphi^{\circ}(x)=\sup\{\zeta\cdot x|\varphi(\zeta)\le 1\}.
		\end{align}
	\end{itemize}
\end{defi}
Note that $\varphi$ is smooth and elliptic if and only if $\varphi^{\circ}$ is smooth and elliptic.
The following formulas will be essential in a lot of computations we do in this article.
\begin{lemma}[Properties of anisotropies]\ \label{lemma2.1} \\
	Let $\varphi$ be a sufficiently smooth  elliptic anisotropy with ellipticity constant $C$. Furthermore, let $\nu,\tau\in \R^2$ be unit vectors with $\nu\cdot\tau=0$ and $p\in\R^2$ with $p\neq 0$. Then we have the following:
	\begin{itemize}
		\item[i.)] $D^2\varphi(\nu)\tau\cdot\tau\ge \tilde{C}$ with $\tilde{C}:=C(2\max\{\varphi(\tilde{\nu})|\tilde{\nu}\in S^1\})^{-1}$.
		\item[ii.)] $D\varphi(p)\cdot p = \varphi(p)$.
		\item[iii.)] $D^2\varphi(p)p=0$.
	\end{itemize}
\end{lemma}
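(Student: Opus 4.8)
The plan is to prove the three items in the order ii.), iii.), i.), since the first two are consequences of homogeneity and the third uses them together with ellipticity. Recall that an anisotropy $\varphi$ is a norm, hence positively $1$-homogeneous: $\varphi(\lambda p)=\lambda\varphi(p)$ for all $\lambda>0$ and $p\neq 0$.

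For ii.), I would differentiate the homogeneity identity $\varphi(\lambda p)=\lambda\varphi(p)$ with respect to $\lambda$ and evaluate at $\lambda=1$: the chain rule gives $D\varphi(\lambda p)\cdot p=\varphi(p)$, and setting $\lambda=1$ yields $D\varphi(p)\cdot p=\varphi(p)$. This is just Euler's identity for homogeneous functions. For iii.), I would instead differentiate the identity $D\varphi(\lambda p)=D\varphi(p)$ — which expresses that $D\varphi$ is positively $0$-homogeneous, itself obtained by differentiating $\varphi(\lambda p)=\lambda\varphi(p)$ in the $p$ variable — with respect to $\lambda$, obtaining $D^2\varphi(\lambda p)\,p=0$; evaluating at $\lambda=1$ gives $D^2\varphi(p)p=0$. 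Alternatively one can differentiate the identity from ii.) in $p$, which gives $D^2\varphi(p)p+D\varphi(p)=D\varphi(p)$, i.e. the same conclusion.

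For i.), the point is to transfer the ellipticity bound \eqref{EquationEllipticAniso} on $\varphi^2$ to a bound on $D^2\varphi$ in the tangential direction. I would compute $D^2(\varphi^2)=2\varphi\, D^2\varphi+2\,D\varphi\otimes D\varphi$ (valid away from the origin where $\varphi$ is smooth). Applying this bilinear form to the unit vector $\tau$ and using $D\varphi(\nu)\cdot\tau=0$ — which follows from iii.) applied at $p=\nu$, since $D^2\varphi(\nu)\nu=0$ means... more directly, since $D\varphi$ is $0$-homogeneous, $D\varphi(\nu)=D\varphi(|\nu|\,\nu)$ and by ii.) $D\varphi(\nu)\cdot\nu=\varphi(\nu)$; the relevant orthogonality is that the cross term $(D\varphi(\nu)\cdot\tau)^2$ need not vanish, so I should be careful here. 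The cleaner route: evaluate $D^2(\varphi^2)(\nu)\,\tau\cdot\tau=2\varphi(\nu)\,D^2\varphi(\nu)\tau\cdot\tau+2(D\varphi(\nu)\cdot\tau)^2\ge C$ by \eqref{EquationEllipticAniso}. Dropping the nonnegative square term gives $2\varphi(\nu)\,D^2\varphi(\nu)\tau\cdot\tau\ge C$, hence $D^2\varphi(\nu)\tau\cdot\tau\ge \frac{C}{2\varphi(\nu)}\ge \frac{C}{2\max\{\varphi(\tilde\nu):\tilde\nu\in S^1\}}=\tilde C$, using that $\nu\in S^1$ and $\varphi$ is continuous so the maximum is attained and positive. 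The main (and only real) obstacle is making sure the distributional inequality \eqref{EquationEllipticAniso} may be evaluated pointwise in the direction $\tau$ at the point $\nu$; this is justified because $\varphi$ is assumed sufficiently smooth on $\R^2\setminus\{0\}$ and $\nu\neq 0$, so $D^2(\varphi^2)$ is continuous there and the distributional bound becomes a classical one. Everything else is elementary homogeneity bookkeeping.
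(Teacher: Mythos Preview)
Your arguments for ii.) and iii.) are correct and are exactly what the paper has in mind when it says these ``follow directly from the homogeneity property of a norm.''

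For i.), however, there is a genuine logical error. From
\[
2\varphi(\nu)\,D^2\varphi(\nu)\tau\cdot\tau \;+\; 2\bigl(D\varphi(\nu)\cdot\tau\bigr)^2 \;\ge\; C
\]
you cannot ``drop the nonnegative square term'' to conclude $2\varphi(\nu)\,D^2\varphi(\nu)\tau\cdot\tau\ge C$: if $A+B\ge C$ with $B\ge 0$, one only gets $A\ge C-B$, which may be strictly smaller than $C$. And the cross term $D\varphi(\nu)\cdot\tau$ does \emph{not} vanish for a general anisotropy (only for the Euclidean norm is $D\varphi(\nu)$ parallel to $\nu$); for instance, for $\varphi^2(p)=p_1^2+p_1p_2+p_2^2$ at $\nu=(1,0)$ one has $D\varphi(\nu)\cdot\tau=\tfrac12$.

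The fix is to use the full matrix inequality $D^2(\varphi^2)(\nu)-C\,\Id\ge 0$, not merely its $[\tau,\tau]$ entry. Writing this matrix in the orthonormal basis $(\nu,\tau)$ and using $D^2\varphi(\nu)\nu=0$, $D\varphi(\nu)\cdot\nu=\varphi(\nu)$ (your ii.) and iii.)), one finds
\[
D^2(\varphi^2)(\nu)-C\,\Id=\begin{pmatrix} 2\varphi(\nu)^2-C & 2\varphi(\nu)\bigl(D\varphi(\nu)\cdot\tau\bigr)\\[2pt] 2\varphi(\nu)\bigl(D\varphi(\nu)\cdot\tau\bigr) & 2\varphi(\nu)\,D^2\varphi(\nu)\tau\cdot\tau+2\bigl(D\varphi(\nu)\cdot\tau\bigr)^2-C\end{pmatrix}.
\]
Positive semi-definiteness forces the determinant to be nonnegative, which after a short computation gives
\[
\bigl(2\varphi(\nu)^2-C\bigr)\bigl(2\varphi(\nu)\,D^2\varphi(\nu)\tau\cdot\tau-C\bigr)\;\ge\;2C\bigl(D\varphi(\nu)\cdot\tau\bigr)^2\;\ge\;0.
\]
Since the $[\nu,\nu]$ entry gives $2\varphi(\nu)^2-C\ge 0$, this yields $D^2\varphi(\nu)\tau\cdot\tau\ge C/(2\varphi(\nu))\ge\tilde C$. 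The paper itself does not spell this out and simply refers to \cite[Remark~1]{MercierNovagaPozziAnisotropicCurvatureFlowImmersedCurves}; your overall strategy (reduce to $D^2(\varphi^2)$) is the right one, but the missing ingredient is exploiting the off-diagonal information from ellipticity.
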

\begin{proof}
	The first fact is proven in \cite[Remark 1]{MercierNovagaPozziAnisotropicCurvatureFlowImmersedCurves}. The other two results follow directly from the homogeneity property of a norm.
\end{proof}
We will need the following quantity for the definition of our flow.
\begin{defi}[Anisotropic curvature]\ \\
	Let $\varphi$ be a sufficiently smooth, elliptic anisotropy and $\Gamma$ a curve in $\R^2$ with the usual notation of the geometric quantities. Additionally, denote by 
	\begin{align}\label{EquationCahnHoffmannVector}
		N:=D\varphi^{\circ}(\nu)
	\end{align}
the Cahn-Hoffmann vector. Then we define the (scalar) anisotropic curvature on $\Gamma$ by
	\begin{align}
		\kappa_{\varphi}:=-N_s\cdot\tau=D^2\varphi^{\circ}(\nu)\tau\cdot\tau\kappa.
	\end{align}
\end{defi}
Now we are able to define the anisotropic curve shortening flow for a triple junction network $\Gamma(t)$. As our solutions will be of class $C^{2+\alpha}$ for some H\"older coefficient $\alpha\in(0,1)$, the initial network $\Gamma_0$ needs to fulfil some compatibility conditions to guarantee the existence of a solution of such regularity. We sum up these compatibility conditions in the following definition before we actually define solutions to our flow. For the reader's convenience we include some basic facts about parabolic H\"older spaces in Appendix \ref{AppendixHoelder}.
\begin{defi}[Geometrically admissible networks]\ \label{def-gan} \\
	Let $P^i\in \R^2, i=1,2,3,$ and $\varphi^\circ$ an elliptic, smooth anisotropy. A network $\Gamma_0$ is called (geometrically) admissible if there exists regular parametrizations $u^i_0\in C^{2+\alpha}([0,1],\R^2), i=1,2,3$ such that $\Gamma^i_0=\im(u^i_0), i=1,2,3,$ and there holds
	\begin{align}\label{EquationCompatibilityConditions}
		\begin{cases}
			u^i_0(0)=P^i & i=1,2,3,\\
			u^1_0(1)=u^2_0(1)=u^3_0(1), & \\
			\sum_{i=1}^3D\varphi^{\circ}(\nu_0^i(1))=0,\\
			\kappa^i_{\varphi}(0)=0 & i=1,2,3,\\
			\kappa^i_{\varphi}(1)\varphi^{\circ}(\nu^i_0(1))\nu^i_0(1)+\lambda^i_0(1)\tau^i_0(1)=\kappa^j_{\varphi}(1)\varphi^{\circ}(\nu^j_0(1))\nu^j_0(1)+\lambda^j_0(1)\tau^j_0(1) & i,j\in \{1,2,3\}
.		\end{cases}
	\end{align}
Hereby, the $\lambda^i_0$ are geometrical, curvature dependent quantities on $\Gamma^i_0$ , which are given in detail in \cite[Section 2.0.4]{KroenerNovagaPozziAnisoCurvFloImmeredNetworks}.
\end{defi}
\begin{defi}[Geometric solution and special flow]\  \label{def-geomflow}\\
	Let $T>0, P^i\in \R^2, i=1,2,3,$ and $\varphi^\circ$ an elliptic, smooth anisotropy. Furthermore, let $\Gamma_0$ be an admissible network with respect to these data and the corresponding parametrizations of $\Gamma_0$ be given by $u_0^i, i=1,2,3$. Then we call $(u^i)_{i=1,2,3}$ with $u^i\in C^{\frac{2+\alpha}{2},2+\alpha}([0,T)\times[0,1],\R^2)$ a geometric solution of the anisotropic mean curvature flow, if
	\begin{align}\label{EquationGeometricProblem}
		\begin{cases}
			(u_t^i\cdot \nu^i)\nu^i=\varphi^{\circ}(\nu^i)(D^2\varphi^{\circ}(\nu^i)\tau^i\cdot\tau^i)\kappa^i\nu^i & \text{on }(0,T)\times(0,1), i=1,2,3,\\
			u^i(t,0)=P^i & \forall t\in(0,T), i=1,2,3,\\
			u^1(t,1)=u^2(t,1)=u^3(t,1) &\forall t\in(0,T),\\
			\sum_{i=1}^3D\varphi^{\circ}(\nu^i(t,1))=0 &\forall t\in(0,T),\\
			u^{i}(0,x)=u_0^{i}(x)  \text{ up to reparametrization} & \forall x\in [0,1], i=1,2,3.
		\end{cases}
	\end{align}
	Up to reparametrization means that $u^{i}(0,x)=u_{0}^{i}(\phi^{i}(x))$ for some orientation preserving diffeomorphism $\phi^{i} \in C^{2+\alpha} ([0,1], [0,1])$.
    A geometric solution, which additionally fulfills, that
   
    \begin{align}\label{EquationSpecialFLowCondition}
    	(u_t^i\cdot \tau^i)\tau^i&=\varphi^{\circ}(\nu^i)(D^2\varphi^{\circ}(\nu^i)\tau^i\cdot\tau^i)\frac{u_{xx}^{i}}{|u_x^{i}|^2}\cdot\tau^i  \tau^{i}& &\text{on }(0,T)\times(0,1), i=1,2,3,
    \end{align}
    
    is called a special flow.
\end{defi}
\begin{rem}[Motivation of the solution and the boundary conditions]\label{RemarkMotivationSolution}\
	\begin{itemize}
		\item[i.)]  The boundary conditions $\eqref{EquationGeometricProblem}_2-\eqref{EquationGeometricProblem}_4$ are natural for the geometric gradient flow. The first two just fix the motion of the boundary points of the curves. The third one follows by considering the variation of $E$, c.f. \eqref{EquationVariationAnisotropicLength}, if one wants to establish a natural $L^2$-gradient flow structure.
		\item[ii.)]Note that the fact that $\eqref{EquationGeometricProblem}_1$ only fixes the motion in normal direction is normal for a PDE system describing a geometric evolution. Tangential movements will not change the geometry and thus have no influence on the geometric evolution law. But as we track the geometric evolution using diffeomorphisms, we are not considering the geometric problem itself anymore but the motion of particles of the curves. Such  equations necessarily needs additional information for the tangential motion not to   be  degenerated. To fix this we have to make a choice for the tangential part. One - but not the only - possible choice is \eqref{EquationSpecialFLowCondition}, which gives us a well-defined system.\\
		To see this from the point of view of partial differential equations note that the special flow gives us that
	    \begin{align*}
	    	u^i_t=\varphi^{\circ}(\nu^i)(D^2\varphi^{\circ}(\nu^i)\tau^i\cdot\tau^i)\frac{u^i_{xx}}{|u_x|^2}.
    \end{align*}
Due to Lemma \ref{lemma2.1}i.) the factor before $u^i_{xx}$ has a positive sign and therefore the whole equation has the structure of quasilinear, parabolic equation. In the isotropic case this is very clear to see, as the special flow reduces to $u_{t}=\frac{u_{xx}}{|u_x|^2}$. 
		\item[iii.)] Observe that the positive factor $\varphi^{\circ}(\nu^i)$ in $\eqref{EquationGeometricProblem}_1$ does not come from the gradient flow structure with respect to the anisotropic surface energy $E$. Still such a factor is natural to be included as it models the mobility, see e.g. \cite{AngenentGurtinMultiphaseThermomechanicsII}. 
		\item[iv.)] Note that up to reparametrization all geometric solutions coincide with the special flow solution, cf. \cite[Lemma 4.1]{KroenerNovagaPozziAnisoCurvFloImmeredNetworks}.
		\item[v.)] The compatibility conditions \eqref{EquationCompatibilityConditions} are a direct consequence from the boundary conditions $\eqref{EquationGeometricProblem}_2-\eqref{EquationGeometricProblem}_4$ for solutions of class $C^{\frac{2+\alpha}{2},2+\alpha}$. The first three are just the boundary conditions postulated for the initial data. The others appear due to the fact that we can differentiate $\eqref{EquationGeometricProblem}_2$ and $\eqref{EquationGeometricProblem}_3$ in time. As the precise formula of the $\lambda^i$ will be not important for our analysis, we do not provide the formula in this work.
	\end{itemize}
\end{rem}

In \cite{KroenerNovagaPozziAnisoCurvFloImmeredNetworks} the following short time existence result was proven.
\begin{teo}[Short time existence for  the geometric flow]\label{TheoremSTE}\ \\
	Let $P^i\in\R^2, i=1,2,3, \alpha\in(0,1)$ and $\varphi$ (resp. $\varphi^{\circ}$) be a smooth, elliptic anisotropy. Furthermore, let $u^i_0\in C^{2+\alpha}([0,1],\R^2), i=1,2,3$ be regular maps fulfilling \eqref{EquationCompatibilityConditions}. Then there exists a $T_{STE}>0$ and regular maps $u^i\in C^{\frac{2+\alpha}{2},2+\alpha}([0,T_{STE}]\times[0,1],\R^2)  \, i=1,2,3,$ such that \eqref{EquationGeometricProblem} are fulfilled.
	 Additionally, we have that $u^i\in C^{\infty}((0,T_{STE}]\times[0,1],\R^2), i=1,2,3$.
\end{teo}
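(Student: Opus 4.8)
The plan is to reduce the short-time existence problem for the geometric flow \eqref{EquationGeometricProblem} to a quasilinear parabolic system with fully nonlinear boundary conditions, and then invoke the standard linearization/fixed-point machinery in parabolic Hölder spaces. First I would use Remark~\ref{RemarkMotivationSolution}ii.) and iv.): instead of solving the degenerate geometric system directly, it suffices to produce a special flow solution, since up to reparametrization every geometric solution coincides with one. For the special flow the tangential and normal parts combine to give
\begin{align*}
	u^i_t=\varphi^{\circ}(\nu^i)(D^2\varphi^{\circ}(\nu^i)\tau^i\cdot\tau^i)\frac{u^i_{xx}}{|u^i_x|^2}\qquad\text{on }(0,T)\times(0,1),\ i=1,2,3,
\end{align*}
which, by Lemma~\ref{lemma2.1}i.) applied to $\varphi^{\circ}$, has a strictly positive coefficient in front of $u^i_{xx}$ and is therefore uniformly parabolic along a regular curve; the diffusion coefficient depends smoothly on $u^i_x/|u^i_x|$, so the system is quasilinear with smooth coefficients as long as $|u^i_x|$ stays bounded away from zero, which holds on a short time interval by continuity since $u^i_0$ is regular.

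Next I would set up the boundary conditions. At $x=0$ we have the linear Dirichlet condition $u^i(t,0)=P^i$. At $x=1$ we have the coupling $u^1(t,1)=u^2(t,1)=u^3(t,1)$ together with the nonlinear concurrency/force-balance condition $\sum_{i=1}^3 D\varphi^{\circ}(\nu^i(t,1))=0$, where $\nu^i$ is the (anticlockwise-rotated, normalized) tangent $(u^i_x)^{\bot}/|u^i_x|$; this is a first-order, fully nonlinear boundary condition in the $u^i_x(t,1)$. The key structural point to check is that the full system — parabolic interior equations plus these boundary operators — satisfies the Lopatinskii--Shapiro (complementing) condition, so that the linearized problem around $u_0$ is well-posed in $C^{\frac{2+\alpha}{2},2+\alpha}$. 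Here the ellipticity of $\varphi^{\circ}$ (hence the positivity $D^2\varphi^{\circ}(\nu)\tau\cdot\tau\ge\tilde C>0$) and the fact that $D^2\varphi^{\circ}(\nu^i(1))$ is the relevant linearization of the force-balance condition in the normal directions are exactly what makes the boundary symbol non-degenerate; this parallels the isotropic verification but must be redone with the anisotropic matrices. Given the complementing condition, linear parabolic theory (e.g. Solonnikov-type estimates, as recalled in Appendix~\ref{AppendixHoelder}) supplies existence, uniqueness and Schauder estimates for the linearized problem, provided the compatibility conditions \eqref{EquationCompatibilityConditions} hold at $t=0$ — which is precisely why they are built into the hypothesis.

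The contraction step is then routine: I would write $u^i=\bar u^i+v^i$ where $\bar u^i$ is a fixed smooth extension of $u^i_0$ (or the solution of the frozen-coefficient linear problem), freeze the coefficients at $\bar u$, and define a map $\Phi$ sending $v$ to the solution of the resulting linear problem with the nonlinear remainder as data; on a small ball in $C^{\frac{2+\alpha}{2},2+\alpha}$ over a short time interval $[0,T_{STE}]$, the smoothness of $\varphi^{\circ}$ away from the origin and the regularity of $u^i_0$ make the nonlinear terms Lipschitz with small constant, so $\Phi$ is a contraction. Uniqueness follows the same way. Finally, the smoothing statement $u^i\in C^\infty((0,T_{STE}]\times[0,1])$ comes from a standard parabolic bootstrap: once a $C^{\frac{2+\alpha}{2},2+\alpha}$ solution exists, difference quotients in $t$ and $x$ solve linearized problems with increasingly regular coefficients (away from $t=0$ no further compatibility is needed), and iterating the Schauder estimates raises the regularity to any order. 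The main obstacle I anticipate is the careful verification of the Lopatinskii--Shapiro condition for the coupled triple-junction boundary system with the anisotropic operators $D^2\varphi^{\circ}(\nu^i(1))$ — this is the genuinely anisotropy-specific computation and the place where ellipticity of $\varphi^\circ$ is essential; everything else follows the well-trodden quasilinear parabolic scheme. (In our setting this was carried out in detail in \cite{KroenerNovagaPozziAnisoCurvFloImmeredNetworks}, to which we refer for the full argument.)
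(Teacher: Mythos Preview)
Your proposal is correct and takes essentially the same approach as the paper: reduce to the special flow, exploit the quasilinear parabolic structure via Lemma~\ref{lemma2.1}i.), verify the Lopatinskii--Shapiro condition at the triple junction, and close by a contraction argument together with parabolic bootstrap for the $C^\infty$-smoothing. The paper's own proof simply cites \cite{KroenerNovagaPozziAnisoCurvFloImmeredNetworks} (Theorems~3.1 and~4.1 and Corollary~3.1 there) for these steps, and your outline accurately describes the content of that reference---indeed the analogous argument for the height-function formulation is spelled out in the proof of Lemma~\ref{LemmaSTEForH} with exactly the ingredients you list.
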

\begin{proof}
The proof is given in Theorem~\cite[Theorem 4.1]{KroenerNovagaPozziAnisoCurvFloImmeredNetworks} and hinges on the short time existence of the special flow (recall \eqref{EquationSpecialFLowCondition}  and see \cite[Theorem 3.1]{KroenerNovagaPozziAnisoCurvFloImmeredNetworks} and \cite[Corollary 3.1]{KroenerNovagaPozziAnisoCurvFloImmeredNetworks}).
\end{proof}

\section{Parametrization and \L ojasiewicz-Simon inequality}\label{SectionParaAndLoja}
\subsection{Parametrization as graphs over reference frames}\label{SubsectionParametrization}
In order to be able to prove a \L ojasiewicz-Simon inequality (short LSI) we have to eliminate the typical tangential degeneracy of the flow. For this we use an idea introduced in \cite{depner2013linearized}. Precisely, we want to track the evolution of $\Gamma(t)$ as graph over a reference frame $\Gamma_{\ast}$, 
	for which we will fix some assumptions for the rest of this article. 
\begin{ass}[Assumptions for $\Gamma_{\ast}$]\label{RemarkAssumstionsGammaast}\ \\
	In the rest of this article $\Gamma_{\ast}$ will be a fixed local minimum of $E$. Hereby, we say that $\Gamma_{\ast}$ - parametrized by some $\gamma_{\ast}$ - is a local minimum, if there is a $\delta>0$ such that for all triple junction networks $\tilde{\Gamma}$ with the same fixed endpoints parametrized by some $\tilde{\gamma}$ with $\|\gamma_{\ast}-\tilde{\gamma}\|_{H^2}\le \delta$ we have that $E(\Gamma_{\ast})\le E(\tilde{\Gamma})$. In particular, $\Gamma_{\ast}$ consists of three curves $\Gamma^i_{\ast}$ with zero curvature, i.e., three straight lines. The three lines start in the points $P^{i}$ and then meet in a common triple junction where the normal vectors $\nu^i_{\ast}(1)$ fulfill  $\eqref{EquationGeometricProblem}_4$. As the normal and tangent vector are constant on each $\Gamma^i_{\ast}$, we will omit the space variable for them.\\
	 Additionally, we require the contact angles $\theta^i_{\ast}$ to fulfill
	\begin{align}\label{EquationAngleRestriction}
		\forall i=1,2,3: \theta^i_{\ast}\in(0,\pi),\quad \theta_{\ast}^1+\theta_{\ast}^2+\theta_{\ast}^3=2\pi.
	\end{align}
     Then, by \eqref{EquationYoungsModulus} we can find positive $\tilde{\alpha}^i_{\ast}\in\R^+, i=1,2,3,$ such that
     \begin{align}\label{EquationDefinitionOfAlphaTildeAst}
     	\sum_{i=1}^{3} \tilde{\alpha}^{i}_{*}\nu^{i}_{*}=0.
     \end{align}
     We fix a triplet $(\tilde{\alpha}_{\ast}^1, \tilde{\alpha}_{\ast}^2,\tilde{\alpha}_{\ast}^3)$. Finally, we fix for $i=1,2,3$ any regular parametrizations $\gamma_{\ast}^i$ of $\Gamma_{\ast}^i$ such that 
     \begin{align}\label{EquationConditionParaOfGammaAst}
     	(\gamma_{\ast}^i)'(1)=\tau_{\ast}^i.
     \end{align}
\end{ass}

\begin{rem}Note that by the assumptions on the anisotropy map equation \eqref{EquationAngleRestriction} is automatically realized. To see this first of all observe that the assumption that $\varphi^{\circ}$  is an elliptic smooth anisotropy yields that the unit balls $\partial B_{\varphi^{\circ}}$ and $\partial B_{\varphi}$ are strictly convex, i.e. they contain no straight segments (cf. for instance \cite[\S.~2]{PozziOnGradientFlowAnisoAreaFunctional}).
Now let $\nu^{i} \in \R^{2}$ be  Euclidean unit vectors $|\nu^{i}|=1$, such that  $\eqref{EquationGeometricProblem}_4$ is fulfilled, i.e.
$$ \sum_{i=1}^{3}D \varphi^{\circ}(\nu^{i}) =0$$
 and set $\xi^{i}=\frac{\nu^{i}}{\varphi^{\circ}(\nu^{i})}$. Then $\varphi^{\circ}(\xi^{i})=1$ for $i=1,2,3$. Moreover $D \varphi^{\circ}(\nu^{i})= D \varphi^{\circ}(\xi^{i})$ is normal to $\partial B_{\varphi^{\circ}}$ in the point $\xi^{i} \in \partial B_{\varphi^{\circ}}$.
Furthermore it holds $\varphi(D \varphi^{\circ}(\nu^{i}))=1$.
Due to the strict convexity of the balls the correspondence
$$  \partial B_{\varphi^{\circ}} \ni \xi^{i}  \to  D \varphi^{\circ}(\nu^{i})\in \partial B_{\varphi}$$
is bijective.\\
If the angle $\theta^{3}$ between the vectors $\nu^{1}$ and $\nu^{2}$ (or equivalently between $\xi^{1}$ and $\xi^{2}$) were equal to $\pi$, then, using the symmetry of the unit ball, we infer $\xi^{1}=-\xi^{2}$, and therefore also $D \varphi^{\circ}(\xi^{1})=-D \varphi^{\circ}(\xi^{2})$, that is $D \varphi^{\circ}(\nu^{1})=-D \varphi^{\circ}(\nu^{2})$.
But then we obtain a contradiction since
$$ 0= \sum_{i=1}^{3}D \varphi^{\circ}(\nu^{i}) = D \varphi^{\circ}(\nu^{3}) \neq 0.$$
If the angle $\theta^{3}$ between the vectors $\nu^{1}$ and $\nu^{2}$ (or equivalently between $\xi^{1}$ and $\xi^{2}$) were equal to $0$, then $\xi_{1}=\xi_{2}$ and hence $D \varphi^{\circ}(\nu^{1})=D \varphi^{\circ}(\nu^{2})$.
From $ 0= \sum_{i=1}^{3}D \varphi^{\circ}(\nu^{i})$ it follows that $2 D\varphi^{\circ}(\nu^{1})= - D\varphi^{\circ}(\nu^{3})$
 yielding again a contradiction since then
 $$2 = \varphi (2 D\varphi^{\circ}(\nu^{1})) = \varphi (- D\varphi^{\circ}(\nu^{3}))=1.$$
Finally, if the angle $\theta^{3}$ between the vectors $\nu^{1}$ and $\nu^{2}$ (or equivalently between $\xi^{1}$ and $\xi^{2}$)  lied in $(\pi, 2\pi)$, then using the strict convexity of $\partial B_{\varphi^{\circ}}$ we infer that the vectors $D\varphi^{\circ}(\nu^{i})$ (which are normal to
$\partial B_{\varphi^{\circ}}$ in $\xi^{i}$) point in the same half plane contradicting the fact that
$ 0= \sum_{i=1}^{3}D \varphi^{\circ}(\nu^{i})$.
\end{rem}

Note that this section and Section \ref{SectionAnalysisH} do not require $\Gamma_{\ast}$ to be a local minimum. But the proof of our main result Theorem \ref{TheoremStabilityAnMCF} uses the property of local decrease of the energy.

 Now, we can define curves $\Gamma^i_{h,\mu}$ by 
\begin{align}\label{EquationNormalGraphs}
	\Gamma^i_{h,\mu}:=\im(\gamma^i_{h,\mu}: [0,1]\to\R^2),\quad \gamma^i_{h,\mu}(x):&=\gamma^i_{\ast}(x)+h^i(x)\nu^i_{\ast}(x)+\mu^i(x)\tau_{\ast}^i(x)\\
	&=\gamma^i_{\ast}(x)+h^i(x)\nu^i_{\ast}+\mu^i(x)\tau_{\ast}^i.\notag
\end{align}
    
Hereby, $h$ and $\mu$ are scalar functions tracking the displacement 
in normal and tangential direction. Clearly, if we want to guarantee the preservation of the triple junction at $x=1$, we will need some boundary conditions at $x=1$. With this in mind we give 
the following result.
\begin{lemma}[Relation between normal and tangential part]\label{LemmaRelationNormalTangentialPart}\ \\
Let $\Gamma_{\ast}, \tilde{\alpha}^i_{\ast}$ be as in Assumption~\ref{RemarkAssumstionsGammaast}. Consider for $i=1,2,3$ regular curve parametrizations $\Phi^i:[0,1]\to\R^2$. Furthermore let $N^i,T^i\in \R$ for $i=1,2,3$ be such that
\begin{align}\label{EquationHelpEquationNormalGraphs}
	\Phi^i(1)-\Sigma_{\ast}=N^i\nu_{\ast}^i+T^i\tau_{\ast}^i.
\end{align}
Hereby, the unique existence of such $N^i,T^i$ is clear due to the fact that for all $i=1,2,3$ the pair $(\nu_{\ast}^i,\tau_{\ast}^i)$ forms a basis of $\R^2$. Then the triple junction condition
\begin{align}
	\Phi^1(1)=\Phi^2(1)=\Phi^3(1)
\end{align}
is equivalent to the conditions
\begin{align}\label{EqRelationNormalPart}
	0&=\sum_{i=1}^3 \tilde{\alpha}_{\ast}^iN^i, 
	\\ \label{EqRelationNormalTangentialPart}
	\begin{pmatrix}T^1 \\ T^2 \\ T^3\end{pmatrix}&=\mathcal{I}\begin{pmatrix}N^1 \\ N^2 \\ N^3\end{pmatrix},\quad \mathcal{I}=\begin{pmatrix}
		0 & \frac{c^2}{s^1} & -\frac{c^3}{s^1} \\
		-\frac{c^1}{s^2} & 0 & \frac{c^3}{s^2} \\
		\frac{c^1}{s^3} & -\frac{c^2}{s^3} & 0
	\end{pmatrix},
\end{align}
with $s^i=\sin\theta^i_{\ast}$ and $c^i=\cos\theta^i_{\ast}$. 
\end{lemma}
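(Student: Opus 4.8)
The plan is to reduce the triple junction condition $\Phi^1(1)=\Phi^2(1)=\Phi^3(1)$ to the pair of equivalent linear conditions \eqref{EqRelationNormalPart} and \eqref{EqRelationNormalTangentialPart} by purely linear-algebraic manipulations in $\R^2$, using the geometry of the reference frame $\Gamma_\ast$. The key structural facts are: (a) each pair $(\nu_\ast^i,\tau_\ast^i)$ is an orthonormal basis of $\R^2$ (so the decomposition \eqref{EquationHelpEquationNormalGraphs} is well-posed); (b) the angle relations between the three frames at the junction, namely that $\theta^k_\ast$ is the angle between $\nu^i_\ast$ and $\nu^j_\ast$ for cyclic $(i,j,k)$, which lets me express the basis $(\nu^j_\ast,\tau^j_\ast)$ in terms of $(\nu^i_\ast,\tau^i_\ast)$ via a rotation by (plus or minus) $\theta^k_\ast$; and (c) the force balance \eqref{EquationDefinitionOfAlphaTildeAst}, $\sum_i \tilde\alpha^i_\ast \nu^i_\ast = 0$, together with its equivalent sine-rule form \eqref{EquationYoungsModulus} from Lemma~\ref{LemmaEquivalenceForceBalanceYoungModulus}.

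First I would rewrite the condition $\Phi^1(1)=\Phi^2(1)=\Phi^3(1)$ as the two vector equations $\Phi^1(1)-\Phi^2(1)=0$ and $\Phi^2(1)-\Phi^3(1)=0$ (the third difference being dependent), and substitute \eqref{EquationHelpEquationNormalGraphs} to get, e.g., $N^1\nu^1_\ast + T^1\tau^1_\ast - N^2\nu^2_\ast - T^2\tau^2_\ast = 0$. Then I would project this identity onto two independent directions. The natural choices are the directions $\nu^k_\ast$ for the various $k$: projecting the difference equations against $\nu^1_\ast,\nu^2_\ast,\nu^3_\ast$ and using $\nu^i_\ast\cdot\nu^j_\ast = \cos\theta^k_\ast$, $\tau^i_\ast\cdot\nu^j_\ast = \pm\sin\theta^k_\ast$ (with signs fixed by the orientation convention, the anticlockwise $90^\circ$ rotation $\bot$), yields a linear system relating the $N^i$ and $T^i$ with coefficients $c^i=\cos\theta^i_\ast$, $s^i=\sin\theta^i_\ast$. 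Careful bookkeeping of which angle sits between which pair of normals, and of the signs coming from orientation, should produce exactly the matrix $\mathcal{I}$ in \eqref{EqRelationNormalTangentialPart} after solving for $(T^1,T^2,T^3)$; here I use that $s^i\neq 0$ because $\theta^i_\ast\in(0,\pi)$ by \eqref{EquationAngleRestriction}. Simultaneously, the components of the difference equations that do not involve the $T^i$ — obtained by pairing against a direction orthogonal to the relevant $\tau$'s, or equivalently by taking the right combination that kills the tangential terms — collapse (using \eqref{EquationDefinitionOfAlphaTildeAst} and the sine rule \eqref{EquationYoungsModulus}) to the single scalar relation $\sum_i \tilde\alpha^i_\ast N^i = 0$. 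I expect the cleanest route to \eqref{EqRelationNormalPart} is: take the $\R^2$-identity $\sum_i(\text{stuff})=0$ implied by summing the three cyclic differences after multiplying by suitable factors $\tilde\alpha^i_\ast$, and dot with a fixed vector, exploiting $\sum_i\tilde\alpha^i_\ast\nu^i_\ast=0$ to eliminate the normal terms and $\sum_i\tilde\alpha^i_\ast\tau^i_\ast=0$ (which also follows from rotating \eqref{EquationDefinitionOfAlphaTildeAst} by $90^\circ$) to eliminate the tangential ones, leaving the constraint among the coefficients.

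For the converse direction I would simply observe that the computation is reversible: given \eqref{EqRelationNormalPart} and \eqref{EqRelationNormalTangentialPart}, the two difference vectors $\Phi^i(1)-\Phi^j(1)$ reconstructed from the $N$'s and $T$'s vanish, since their components against an orthonormal basis all vanish by the derived identities; hence $\Phi^1(1)=\Phi^2(1)=\Phi^3(1)$. The main obstacle I anticipate is purely the sign and index bookkeeping: getting the orientation of the $\bot$-rotation consistent across all three frames and verifying that the off-diagonal entries land in the precise cyclic pattern $\mathcal{I}_{ij}$ shown (with the correct $\pm$ and the correct $s^k$ in the denominator) rather than a transposed or sign-flipped variant. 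A clean way to control this is to fix one frame, say $(\nu^1_\ast,\tau^1_\ast)$, as a reference, write $\nu^2_\ast,\nu^3_\ast,\tau^2_\ast,\tau^3_\ast$ explicitly as rotations of it by the appropriate signed angles determined by \eqref{EquationAngleRestriction}, and then everything reduces to elementary trigonometric identities; invoking \eqref{EquationYoungsModulus} in the sine-rule form makes the passage to the $\tilde\alpha^i_\ast$-weighted relation \eqref{EqRelationNormalPart} transparent. No deep analysis is needed — the content is entirely the linear algebra of three coherently oriented orthonormal frames meeting at angles summing to $2\pi$.
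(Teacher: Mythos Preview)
Your proposal is correct and follows essentially the same approach as the paper: the paper simply refers to \cite[Lemma 2.3]{depner2013linearized} and notes that the conditions \eqref{EqRelationNormalPart}--\eqref{EqRelationNormalTangentialPart} arise from forming a linear system out of the triple junction preservation condition $\Phi^i(1)=\Phi^j(1)$, which is exactly the projection-and-bookkeeping argument you outline. Your sketch is in fact more explicit than the paper's own proof, and your identification of the sign/orientation bookkeeping as the only real obstacle is spot on.
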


\begin{proof}
	The proof is carried out in \cite[Lemma 2.3]{depner2013linearized} for the case where the $\Phi^i$ are of the form as in \eqref{EquationNormalGraphs}. The proof there only relies on the geometric situation at the triple junction, i.e., what is the motion of $\Sigma_{\ast}$ in direction of $\nu_{\ast}^i(1)$ and $\tau_{\ast}^i(1)$ for i=1,2,3. Due to \eqref{EquationHelpEquationNormalGraphs} we retrieve this information from the $\varPhi^i$. Therefore, we can apply the same proof in the general situation.\\
	Note that the conditions \eqref{EqRelationNormalPart} and \eqref{EqRelationNormalTangentialPart} simply arise from forming a linear equation system out of the triple junction preservation condition
	$\varPhi^i(1)=\varPhi^j(1)$ 
	for $i,j\in\{1,2,3\}$.
\end{proof}
 \begin{rem}[On the Matrix $\mathcal{I}$ in \eqref{EqRelationNormalTangentialPart}]\label{RemarkOnMatrixI}\ \\ A straightforward calculation shows that $\det(\mathcal{I})=0$, so $\mathcal{I}$ has a kernel. This might be irritating at first as a triple junction preserving motion will need a tangential part, if the normal motion is unequal to zero. To see that this is indeed true, assume that $(N^1, N^2, N^3)^T\in\R^3$ fulfills  \eqref{EqRelationNormalPart}. Observe that vectors solving \eqref{EqRelationNormalPart} are in the hyperplane spanned by $v=(0, -(\tilde{\alpha}^2_{\ast})^{-1}, (\tilde{\alpha}^3_{\ast})^{-1})^T$ and $w=(-(\tilde{\alpha}^1_{\ast})^{-1}, 0, (\tilde{\alpha}^3_{\ast})^{-1})^T$. Then, we have
\begin{align*}
	\mathcal{I}v=\begin{pmatrix}-\frac{c^2}{s^1\tilde{\alpha}^2_{\ast}}-\frac{c^3}{s^1\tilde{\alpha}^3_{\ast}} \\ \frac{c^3}{s^2\tilde{\alpha}^3_{\ast}}\\ \frac{c^2}{s^3\tilde{\alpha}^2_{\ast}} \end{pmatrix}, \qquad \mathcal{I}w=\begin{pmatrix}-\frac{c^3}{s^1\tilde{\alpha}^3_{\ast}} \\\frac{c^1}{s^2\tilde{\alpha}^1_{\ast}}+\frac{c^3}{s^2\tilde{\alpha}^3_{\ast}}\\ -\frac{c^1}{s^3\tilde{\alpha}^1_{\ast}} \end{pmatrix}.
\end{align*}
We see directly that neither $\mathcal{I}v=0$ nor $\mathcal{I}w=0$ holds, as this would imply $c^2=c^3=0$ resp. $c^1=c^3=0$ and thus $\theta^2, \theta^3\in\{\frac{\pi}{2},\frac{3\pi}{2}\}$ resp. $\theta^1, \theta^3\in\{\frac{\pi}{2},\frac{3\pi}{2}\}$. But this would contradict \eqref{EquationAngleRestriction}. Now assume that there is a $d\neq 0$ with $d\mathcal{I}v=\mathcal{I}w$. 
The discussion of the system so obtained yields $c_{2}=c_{3}=0$. So again, we have that $\theta^2,\theta^3\in \{\frac{\pi}{2},\frac{3\pi}{2}\}$, which contradicts \eqref{EquationAngleRestriction}. In total, we see that for any normal part fulfilling \eqref{EqRelationNormalPart} the tangential part given by \eqref{EqRelationNormalTangentialPart} does not vanish. 

\end{rem}
 Lemma ~\ref{LemmaRelationNormalTangentialPart} tells us in particular that the preservation of the triple junction eliminates the tangential degree of freedom at $x=1$. Indeed, we want to eliminate the tangential degree of freedom also for all $x\in[0,1)$. The reason for this is that otherwise we will have problems in the verification of the \L ojasiewicz-Simon gradient inequality in Section \ref{SectionLoja}. Now the idea is to use \eqref{EqRelationNormalTangentialPart} for every $x\in[0,1]$ to get $\mu$ as function in $h$.  Precisely, we  proceed as in the following definition.
 \begin{defi}[Definition of tangential component $\mu=\mu(h)$]\label{defmufinal}\ \\
 For any $h\in (C([0,1],\R))^3$ fulfilling \eqref{EqRelationNormalPart} we define  $\mu=\mu(h)\in (C([0,1],\R))^3$ by
  	\begin{align}\label{EquationCOnstructionTangentialPart}
  		\forall x\in[0,1], i=1,2,3: \mu(h)(x)=\mathcal{I}\begin{pmatrix}
  			h^1(x) \\ h^2(x) \\ h^3(x)\end{pmatrix}
  	\end{align} 
	where $\mathcal{I}$ is as in \eqref{EqRelationNormalTangentialPart}.
	\end{defi} 
  Note that the fixed point boundary condition at $x=0$ is then equivalent to $h=0$ and $h=0$ implies $\mu(h)=0$. The curve resulting from a specific choice of $h$ together with $\mu(h)$ will be denoted by $\Gamma^i_{h}$ and, if not stated otherwise, $\mu$ will always be this specific choice depending on $h$. Now, we have to see that this kind of parametrization is universal in the sense that every network $\Gamma$, which is $C^{2+\alpha}$-close to $\Gamma_{\ast}$, can indeed (up to reparametrization) be written in the form \eqref{EquationNormalGraphs}. This result is stated in the following lemma.

\begin{lemma}[Existence of reference frame graph parametrization]\label{LemmaExistenceRefFramGraphPara}\ \\
	Let $\Gamma_{\ast}, \gamma^i_{\ast},\tilde{\alpha}^i_{\ast}, i=1,2,3$ be as in Assumption \ref{RemarkAssumstionsGammaast} and $\alpha\in(0,1)$. Then, there exists $\sigma(\Gamma_{\ast}, \alpha)>0$ such that any regular parametrizations $(\gamma^i)_{i=1,2,3}$ with 
	\begin{align}
		\gamma^i\in C^{2+\alpha}([0,1], \R^{2}),  \quad \gamma^i(0)=\gamma^i_{\ast}(0),\quad \gamma^1(1)=\gamma^2(1)=\gamma^3(1)
	\end{align}
	and
	\begin{align}
		\sum_{i=1}^3\|\gamma_{\ast}^i-\gamma^i\|_{C^{2+\alpha}([0,1],\R^2)}\le \sigma(\Gamma_{\ast}, \alpha),
	\end{align} 	
	there exists for $i=1,2,3$ functions $h^i,\mu^i\in C^{2+\alpha}([0,1], \R)$ fulfilling  \eqref{EquationCOnstructionTangentialPart} and reparametrizations $\varPhi^i: [0,1]\to[0,1]$ of class $C^{2+\alpha}$ such that 
	\begin{align}\label{EquationHReparametrizingGamma}
		\forall x\in[0,1], i=1,2,3: (\gamma^i\circ\varPhi^i)(x)=\gamma^i_{\ast}(x)+h^i(x)\nu^i_{\ast}+\mu^i(x)\tau_{\ast}^i.
	\end{align}
	Moreover, for any $\delta>0$ there is $\tilde{\sigma}\in(0,\sigma(\Gamma_{\ast}, \alpha))$ such that
	\begin{align}\label{EquationReparametrizationToNormalGraphSmallnesH}
		\sum_{i=1}^3\|\gamma_{\ast}^i-\gamma^i\|_{C^{2+\alpha}([0,1],\R^2)}\le\tilde{\sigma}\Rightarrow \sum_{i=1}^3\|h^i\|_{C^{2+\alpha}([0,1])}+\|\varPhi^i(x)-\mathrm{Id}_{[0,1]}\|_{C^{2+\alpha}([0,1])}\le \delta.
	\end{align}
\end{lemma}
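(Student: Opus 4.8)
The statement is essentially a quantitative inverse function theorem for the map that sends a pair $(h,\varPhi)$ to the reparametrized graph $\gamma_\ast+h\nu_\ast+\mu(h)\tau_\ast$ composed with the inverse of $\varPhi$. The plan is to first write, for each $i$, a candidate curve $\gamma^i$ that is $C^{2+\alpha}$-close to $\gamma^i_\ast$ and look for $\varPhi^i$ and $h^i$ (with $\mu^i$ then determined by \eqref{EquationCOnstructionTangentialPart}) so that \eqref{EquationHReparametrizingGamma} holds. Projecting \eqref{EquationHReparametrizingGamma} onto the basis $(\nu^i_\ast,\tau^i_\ast)$ turns the vector equation into two scalar equations: the $\nu^i_\ast$-component reads $h^i(x)=\big(\gamma^i(\varPhi^i(x))-\gamma^i_\ast(x)\big)\cdot\nu^i_\ast$, which determines $h^i$ explicitly once $\varPhi^i$ is known, while the $\tau^i_\ast$-component becomes the genuine equation to solve for $\varPhi^i$, namely
\begin{align}\label{EqPlanTangential}
	\big(\gamma^i(\varPhi^i(x))-\gamma^i_\ast(x)\big)\cdot\tau^i_\ast = \mu^i(h)(x) = \Big(\mathcal{I}\big(h^1,h^2,h^3\big)^{T}\Big)^i(x).
\end{align}
Because the right-hand side of \eqref{EqPlanTangential} couples the three curves through $\mathcal{I}$ and through $h^j=(\gamma^j\circ\varPhi^j-\gamma^j_\ast)\cdot\nu^j_\ast$, this is a coupled system of three scalar functional equations for the triple $(\varPhi^1,\varPhi^2,\varPhi^3)$.

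The heart of the argument is to set this up as a fixed point / implicit function problem on the Banach space $X=\{(\varPhi^i)_{i=1,2,3}\in (C^{2+\alpha}([0,1]))^3 : \varPhi^i(0)=0,\ \varPhi^i(1)=1\}$ (or rather on the affine subspace through $\mathrm{Id}$), and apply the implicit function theorem in Banach spaces to the map $F:(\gamma^i)_i\times(\varPhi^i)_i\mapsto$ (the defect in \eqref{EqPlanTangential}) at the point $\big((\gamma^i_\ast)_i,(\mathrm{Id})_i\big)$, where $F=0$ trivially holds with $h^i=0$, $\mu^i=0$. The key linearization computation is the partial differential of $F$ with respect to $(\varPhi^i)_i$ at that base point: differentiating the left-hand side of \eqref{EqPlanTangential} gives $\dot\varPhi^i(x)\,(\gamma^i_\ast)'(x)\cdot\tau^i_\ast = \dot\varPhi^i(x)\,|(\gamma^i_\ast)'(x)|$ (using $(\gamma^i_\ast)'(1)=\tau^i_\ast$ and that $\gamma^i_\ast$ is a straight line, so $(\gamma^i_\ast)'$ is a nonzero constant multiple of $\tau^i_\ast$ on all of $[0,1]$), while differentiating the right-hand side produces a bounded-order term in $\dot h^i$, hence a lower-order term in $\dot\varPhi^i$ since $\dot h^i(x)=\dot\varPhi^i(x)(\gamma^i_\ast)'(x)\cdot\nu^i_\ast=0$ at the base point. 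Therefore the linearized operator is multiplication by the nonvanishing function $|(\gamma^i_\ast)'|$ on each component, which is an isomorphism of the relevant $C^{2+\alpha}$ spaces respecting the boundary constraints $\varPhi^i(0)=0$, $\varPhi^i(1)=1$; note the endpoint constraints are consistent because at $x=0$ both sides of \eqref{EqPlanTangential} vanish ($\gamma^i(0)=\gamma^i_\ast(0)$, $h=0\Rightarrow\mu=0$) and at $x=1$ the triple-junction hypothesis together with Lemma \ref{LemmaRelationNormalTangentialPart} forces exactly the relation \eqref{EqPlanTangential} to hold for the unique admissible $\varPhi^i(1)=1$. The implicit function theorem then yields, for $(\gamma^i)_i$ in a $C^{2+\alpha}$-neighborhood of $(\gamma^i_\ast)_i$ of some radius $\sigma(\Gamma_\ast,\alpha)$, a unique $C^{2+\alpha}$ solution $(\varPhi^i)_i$ depending continuously (indeed smoothly) on $(\gamma^i)_i$, and the smallness statement \eqref{EquationReparametrizationToNormalGraphSmallnesH} is just the continuity of this solution map at $(\gamma^i_\ast)_i$, since at the base point $\varPhi^i=\mathrm{Id}$ and $h^i=0$.

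A few points need care and constitute the main obstacles. First, one must check that the constructed $\varPhi^i$ is genuinely a reparametrization, i.e. an orientation-preserving $C^{2+\alpha}$-diffeomorphism of $[0,1]$: this follows from $(\varPhi^i)'>0$, which holds near the base point where $(\varPhi^i)'$ is $C^{\alpha}$-close to $1$, combined with $\varPhi^i(0)=0$, $\varPhi^i(1)=1$; shrinking $\sigma$ guarantees this. Second, the composition map $(\gamma^i,\varPhi^i)\mapsto \gamma^i\circ\varPhi^i$ must be shown to be $C^1$ (or at least differentiable with the stated derivative) as a map between the relevant Hölder spaces — this is the standard but slightly delicate fact that composition is smooth on $C^{2+\alpha}\times\{C^{2+\alpha}\text{-diffeos}\}$, for which one loses no derivatives because $\varPhi^i$ is taken one notch smoother than strictly needed; I would either cite a standard reference or verify it directly using the mean value form of the remainder. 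Third, one should confirm that the $h^i$ produced automatically satisfy \eqref{EqRelationNormalPart} at $x=1$, so that $\mu(h)$ is well-defined via Definition \ref{defmufinal}; but this is immediate from Lemma \ref{LemmaRelationNormalTangentialPart} applied to $\Phi^i=\gamma^i\circ\varPhi^i$ at $x=1$, since the $\gamma^i$ satisfy the triple-junction condition by hypothesis. I expect the differentiability/regularity of the composition map together with the bookkeeping of the boundary constraints to be the most technical part; the linearization itself is clean precisely because $\Gamma_\ast$ consists of straight lines, which is what makes this ``elegant'' approach (working directly on function spaces rather than pointwise as in \cite[Proposition 3.4]{PludaPozzetaLojasiewiczsimon}) go through.
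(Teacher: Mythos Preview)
Your overall strategy---set up the implicit function theorem for the defect map $F$ that tests whether the tangential component equals $\mathcal I$ applied to the normal component, linearize at $(\gamma_\ast,\mathrm{Id})$, observe that the $\mathcal I$-term drops out because $(\gamma_\ast^i)'\cdot\nu_\ast^i=0$, and identify the derivative as multiplication by $|(\gamma_\ast^i)'|$---matches the paper exactly. However, there is a genuine gap in the regularity bookkeeping, and it is precisely the point you flag as ``slightly delicate''.

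You propose to work with $\varPhi^i\in C^{2+\alpha}$ and target space $C^{2+\alpha}$, asserting that composition $(\gamma,\varPhi)\mapsto\gamma\circ\varPhi$ is smooth on $C^{2+\alpha}\times C^{2+\alpha}$ with ``no loss of derivatives because $\varPhi^i$ is taken one notch smoother than strictly needed''. This is false: the Fr\'echet derivative with respect to $\varPhi$ is $\dot\varPhi\mapsto\dot\varPhi\cdot\gamma'(\varPhi)$, and for a general $\gamma\in C^{2+\alpha}$ one only has $\gamma'\in C^{1+\alpha}$, so the candidate derivative lands in $C^{1+\alpha}$, not $C^{2+\alpha}$. (It is the \emph{outer} function that must be one notch smoother, not the inner one.) At the base point $\gamma=\gamma_\ast$ the derivative $\gamma_\ast'$ happens to be smooth because $\Gamma_\ast$ is straight, but the implicit function theorem requires $\partial_\varPhi F$ to exist as a map into the target space on a full neighbourhood, where $\gamma$ is merely $C^{2+\alpha}$. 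Taking instead the target space $C^{1+\alpha}$ while keeping $Y=C^{2+\alpha}$ does not help either: then $\partial_\varPhi F(\gamma_\ast,\mathrm{Id}):C^{2+\alpha}\to C^{1+\alpha}$, $\dot\varPhi\mapsto\dot\varPhi\,|(\gamma_\ast)'|$, is injective but not surjective, so the bijectivity hypothesis fails.

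This loss of one derivative is exactly why the paper first runs the implicit function argument with $\varPhi_P\in C^{1+\alpha}$ and target $C^{1+\alpha}$ (where $\partial_\varPhi F$ is well-defined everywhere and the base-point derivative is an isomorphism), obtaining only $\varPhi\in C^{1+\alpha}$, and then recovers the missing derivative by differentiating the identity $F(\gamma_P,\varPhi_P(\gamma_P))=0$ in $x$ and solving the resulting pointwise linear system $A(\gamma_P)(1+\partial_x\varPhi_P)=(\partial_x\gamma_\ast^i\cdot\tau_\ast^i)_i$ for $\partial_x\varPhi_P$; since $A(\gamma_P)$ has $C^{1+\alpha}$ entries and is invertible for small $\gamma_P$, this yields $\partial_x\varPhi_P\in C^{1+\alpha}$, hence $\varPhi_P\in C^{2+\alpha}$. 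Your proposal is missing this two-step structure; once you insert it, the argument coincides with the paper's.
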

\begin{proof}
	The proof is very similar to the proof of \cite[Lemma 4.1]{AcquaPozziSpenerLSIOpenElasticCurve}. The main difference is the triple junction geometry and - as a consequence - that we have a tangential part in the sought reparametrization. Nevertheless, the main idea is to construct the reparametrization using the implicit function theorem (cf. Theorem~\ref{TheoremImplicitFunction}). \\
	Before we start with the proof itself we want to give a short motivation for it. For technical reasons we will loose one order of differentiability in the construction of $\varPhi$ and $h$. Afterwards we can regain it by an implicit differentiation. To that end it is convenient that $\varPhi$ and $h$ do not appear as free independent parameters. Therefore, we have to find a formulation which only needs $\varPhi$ as a independent parameter. By rearranging \eqref{EquationHReparametrizingGamma} we see that for its solutions we have that
	\begin{align}\label{EquationRelationVarphiH}
		\left((\gamma^i\circ\varPhi^i)(x)-\gamma^i(x)\right)\cdot \nu_{\ast}^i=h^i, \quad \left((\gamma^i\circ\varPhi^i)(x)-\gamma^i(x)\right)\cdot \tau_{\ast}^i=\mu^i.
	\end{align} 
    Additionally, we want the relationship \eqref{EquationCOnstructionTangentialPart} to be fulfilled. Plugging these two facts together allows us to to find a suitable functional analytic setting. We just choose a functional, which tests if \eqref{EquationCOnstructionTangentialPart} is fulfilled for $h$ and $\mu$ given by \eqref{EquationRelationVarphiH}.\\
	To begin with we have to introduce some functions spaces. We set 
	\begin{align*}
		X&:=\{(\gamma^i_P)_{i=1,2,3}\in C^{2+\alpha}([0,1],\R^2)|(\forall i=1,2,3: \gamma^i_P(0)=0)\wedge \gamma^1_P(1)=\gamma^2_P(1)=\gamma^3_P(1)\},\\
		Y&:=\{(\varPhi^i_P)_{i=1,2,3}\in(C^{1+\alpha}([0,1],\R))^3| \forall i=1,2,3, x=0,1: \varPhi^i_P(x)=0\},\\
		D&:=\{g\in C^{1+\alpha}([0,1],\R^3)|g(0)=0\wedge g(1)=0\}.
	\end{align*}

	Before we  can actually define the function to use the implicit function theorem on, we have to choose suitable subsets in the spaces above. This is to ensure that the whole construction will be well-defined.
	For this let $0\in  U\subset X$ be an open neighborhood of zero such that $\gamma_{\ast}^i+\gamma^i_P$ is immersed and regular for all $i=1,2,3$ and $(\gamma_P^i)_{i=1,2,3}\in U$. 
	Furthermore, let $0\in V\subset Y$ be an open neighborhood such that \begin{align}\label{EquationPropertionPerturbationParametrization}\|(\varPhi_P^i)'\|_{\infty}<\frac{1}{2}\end{align}
	for all $i=1,2,3$ and $(\varPhi_P^i)_{i=1,2,3}\in V$.  The reason we want to have \eqref{EquationPropertionPerturbationParametrization} is, that it guarantees that for all $(\varPhi_P^i)_{i=1,2,3}\in V$ we have that $\mathrm{Id}_{[0,1]}+\varPhi_P^i$ is a $C^1$-diffeomorphism  for all $i=1,2,3$. To see this note that with \eqref{EquationPropertionPerturbationParametrization} we have pointwise that $|(\mathrm{Id}_{[0,1]}+\varPhi_P^i)'|\ge\frac{1}{2}>0$. Thus, $\Id_{[0,1]}+\varPhi_P^i$ is strictly increasing on $[0,1]$ and with $(\Id_{[0,1]}+\varPhi_P^i)(0)=0$ and $(\Id_{[0,1]}+\varPhi_P^i)(1)=1$, which hold due to the boundary conditions in $Y$, it follows bijectivity. Additionally, the inverse function theorem yields the fact that we have a $C^1$-diffeomorphism.\\
	Now we define the map
	\begin{align*}
		F: U\times V&\to D,\\
		(\gamma_P, \varPhi_P)&\mapsto  \left([(\gamma_{\ast}^i+\gamma_P^i)\circ(\Id_{[0,1]}+\varPhi_P^i)-\gamma_{\ast}^i]\cdot\tau_{\ast}^i\right)_{i=1,2,3}\\
		&-\mathcal{I}\left([(\gamma_{\ast}^i+\gamma_P^i)\circ(\Id_{[0,1]}+\varPhi_P^i)-\gamma_{\ast}^i]\cdot\nu_{\ast}^i\right)_{i=1,2,3},
	\end{align*}
    where $\mathcal{I}$ is the matrix from \eqref{EqRelationNormalTangentialPart}.
	As we already saw before, the first summand is just the tangential part $\mu$ and the vector after $\mathcal{I}$ is the normal part $h$,  which we would regain from the choice $\Id_{[0,1]}+\varPhi_P$ as reparametrization. Thus, $F$ checks if the pair $(h,\mu)$ resulting from the reparametrization $\Id_{[0,1]}+\varPhi_P$ fulfills \eqref{EquationCOnstructionTangentialPart}. 
	Note that the right-hand side is indeed of class $C^{1+\alpha}$. To see this observe that the composition of $C^{k+\alpha}$-functions is again in $C^{k+\alpha}$ for $k\ge 1$, cf. \cite[Remark B.4]{Dall'AcquaAnnaLinPozziElasticFlowShortTimeExistence}.
	 Thus, we obtain that $(\gamma_{\ast}^i+\gamma_P)\circ(\Id_{[0,1]}+\varPhi_P)\in C^{1+\alpha}$. 
	Additionally, the right-hand side fulfills the boundary conditions at $x=0$ and $x=1$ included in the definition of the space $D$. For this note that in both points we have due to the boundary values of $\varPhi_P$ that 
	\begin{align*}
		F(\gamma_{P}, \varPhi_P)=\left(\gamma^i_P\cdot\tau_{\ast}^i\right)_{i=1,2,3}-\mathcal{I}\left(\gamma^i_P\cdot\nu_{\ast}^i\right)_{i=1,2,3}.
	\end{align*}
    Now, $F(\gamma_{P},\varPhi_P)(0)=0$ is clear due to $\gamma_P(0)=0$. On the other hand, $F(\gamma_{P},\varPhi_P)(1)=0$ follows from Lemma \ref{LemmaRelationNormalTangentialPart}. In total, we see that $F$ is well-defined.
	Furthermore, $F$ is clearly continuous. We now have to study the Fr\'echet-derivative with respect to the second component, which we will denote in the following by $\partial_{2}$. This reduces to the Fr\'echet-derivative of $(\gamma_{\ast}^i+\gamma_P^i)\circ(\Id_{[0,1]}+\varPhi_P^i)$. For this we observe for $(\gamma_P,\varPhi_P)\in U\times V, \tilde{\varPhi}_P\in Y$ small enough and $i=1,2,3$ that we have pointwise
	\begin{align*}
		&(\gamma_{\ast}^i+\gamma^i_P)\circ(\Id_{[0,1]}+\varPhi_P^i+\tilde{\varPhi}_P^i)-(\gamma_{\ast}^i+\gamma^i_P)\circ(\Id_{[0,1]}+\varPhi_P^i)-\tilde{\varPhi}_P^i(\partial_x(\gamma_{\ast}^i+\gamma^i_P))\circ(\Id_{[0,1]}+\varPhi_P^i)\\
		&=\int_0^1 \tilde{\varPhi}_P^i[\partial_x(\gamma_{\ast}^i+\gamma_P^i)\circ(\Id_{[0,1]}+\varPhi_P^i+t\tilde{\varPhi}_P^i)-\partial_x(\gamma_{\ast}^i+\gamma_P^i)\circ(\Id_{[0,1]}+\varPhi_P^i)]dt.
	\end{align*}
    Using the monotonicity of the integral we conclude that
    \begin{align*}
    &\frac{\|(\gamma_{\ast}^i+\gamma^i_P)\circ(\Id_{[0,1]}+\varPhi_P^i+\tilde{\varPhi}_P^i)-(\gamma_{\ast}^i+\gamma^i_P)\circ(\Id_{[0,1]}+\varPhi_P^i)-\tilde{\varPhi}_P^i(\partial_x(\gamma_{\ast}^i+\gamma^i_P))\circ(\Id_{[0,1]}+\varPhi_P^i)\|_{C^{1+\alpha}([0,1],\R)}}{\|\tilde{\varPhi}_P^i\|_{C^{1+\alpha}([0,1],\R)}}\\
    &\le\int_0^1\|\partial_x(\gamma_{\ast}^i+\gamma_P^i)\circ(\Id_{[0,1]}+\varPhi_P+t\tilde{\varPhi}_P)-\partial_x(\gamma_{\ast}^i+\gamma_P^i)\circ(\Id_{[0,1]}+\varPhi_P)\|_{C^{1+\alpha}([0,1],\R)}dt.
    \end{align*} 
    Using again \cite[Remark B4]{Dall'AcquaAnnaLinPozziElasticFlowShortTimeExistence} we see that the integrand tends to zero uniformly in $t$ for $\|\varPhi^i_P\|_{C^{1+\alpha}([0,1],\R)}\to 0$.
	This shows that $\partial_{2}F$ exists on $U\times V$ and is given by
	\begin{align*}
		&\partial_{2}F(\gamma_P, \varPhi_P): Y \to D,\\
		&\tilde{\varPhi}_P\mapsto \left([\tilde{\varPhi}_P^i\partial_x(\gamma_{\ast}^i+\gamma_P^i)\circ(\Id_{[0,1]}+\varPhi_P^i)]\cdot\tau_{\ast}^i\right)_{i=1,2,3}-\mathcal{I}\left([\tilde{\varPhi}_P^i\partial_x(\gamma_{\ast}^i+\gamma_P^i)\circ(\Id_{[0,1]}+\varPhi_P^i)]\cdot\nu_{\ast}^i\right)_{i=1,2,3}.
	\end{align*}
	Clearly, $\partial_{2}F$ is continuous in $(0,0)$ and we have that
	\begin{align*}
		\partial_{2}F(0, 0)\tilde{\varPhi}_P &=  \left([\tilde{\varPhi}_P^i\partial_x\gamma_{\ast}^i\circ\Id_{[0,1]}]\cdot\tau_{\ast}^i\right)_{i=1,2,3}-\mathcal{I}\left([\tilde{\varPhi}_P^i\partial_x\gamma_{\ast}^i\circ\Id_{[0,1]}]\cdot\nu_{\ast}^i\right)_{i=1,2,3}\\
		&=\left(\tilde{\varPhi}_P^i(\partial_x\gamma_{\ast}^i\cdot\tau_{\ast}^i)\right)_{i=1,2,3}=\mathrm{diag}(\partial_x\gamma_{\ast}^1\cdot\tau_{\ast}^1, \partial_x\gamma_{\ast}^2\cdot\tau_{\ast}^2, \partial_x\gamma_{\ast}^3\cdot\tau_{\ast}^3)\tilde{\varPhi}_P.
	\end{align*}
    Due to Assumption \ref{RemarkAssumstionsGammaast} we have that $\mathrm{diag}(\partial_x\gamma_{\ast}^1\cdot\tau_{\ast}^1, \partial_x\gamma_{\ast}^2\cdot\tau_{\ast}^2, \partial_x\gamma_{\ast}^3\cdot\tau_{\ast}^3)^{-1}$ exists and is smooth. Therefore, we see immediately that $\partial_{2}F(0, 0)$ is bijective. \\
	Finally, we see that 
	$$F(0,0)=\left( (\gamma^i_{\ast}-\gamma^i_{\ast})\cdot\tau_{\ast}^i\right)_{i=1,2,3}-\mathcal{I}\left( (\gamma^i_{\ast}-\gamma^i_{\ast})\cdot\nu_{\ast}^i\right)_{i=1,2,3}=0.$$ In total we have shown that all prerequisites to apply the implicit function theorem (see Theorem~\ref{TheoremImplicitFunction}) are valid. This gives us the existence of $\sigma>0$ and $r>0$ such that $B_{\sigma}(0)\times B_{r}(0)\subset U\times V $ and that for any $\gamma_P\in U$ with $\|\gamma\|_X\le \sigma$ we have exactly one $\varPhi_P(\gamma)\in V$ with $\varPhi_P\in B_{r}(0)$ such that $F(\gamma_P,\varPhi_P(\gamma))=0$. Consequently, we can write any family of curves of the form $\gamma_{\ast}+\gamma_P$ with $\gamma\in B_{\sigma}(0)$ in the form \eqref{EquationNormalGraphs} by choosing $\Id_{[0,1]}+\varPhi_P(\gamma)$ as reparametrization and
	\begin{align}\label{EquationDerivingH}
		h=[(\gamma_{\ast}+\gamma_P)\circ(\Id_{[0,1]}+\varPhi_P)-\gamma_{\ast}]\cdot\nu_{\ast}.
	\end{align} 
    Now we have to verify the $C^{2+\alpha}$-regularity of $\varPhi_P$, which then directly implies the $C^{2+\alpha}$-regularity of $h$. This is done by implicit differentiation, i.e., differentiating the identity $F(\gamma_P, \varPhi_P(\gamma_P))=0$ with respect to the space variable. This yields that
    \begin{align*}
    	0&=\left([(1+\partial_x\varPhi_P^i(\gamma_P))\partial_x(\gamma_{\ast}^i+\gamma_P^i)\circ(\Id_{[0,1]}+\varPhi_P^i(\gamma_P))-\partial_x\gamma_{\ast}^i]\cdot\tau_{\ast}^i\right)_{i=1,2,3}\\
    	&-\mathcal{I}\left([(1+\partial_x\varPhi_P^i(\gamma_P))\partial_x(\gamma_{\ast}^i+\gamma_P^i)\circ(\Id_{[0,1]}+\varPhi_P^i(\gamma_P))-\partial_x\gamma_{\ast}^i]\cdot\nu_{\ast}^i\right)_{i=1,2,3}.
    \end{align*}
This can be rearranged to
\begin{align}\label{EquationImpliciteDifferentiationResult}
	A(\gamma_P) (1+\partial_x\varPhi_P^i(\gamma_P))_{i=1,2,3}=(\partial_x\gamma_{\ast}^i\cdot\tau_{\ast}^i)_{i=1,2,3}-\mathcal{I}\left(\partial_x\gamma^i_{\ast}\cdot\nu_{\ast}^i\right)_{i=1,2,3}=(\partial_x\gamma_{\ast}^i\cdot\tau_{\ast}^i)_{i=1,2,3},
\end{align}
with
\begin{align*}
	A(\gamma_P)&=\mathrm{diag}(\left([\partial_x(\gamma_{\ast}^i+\gamma_P^i)\circ(\Id_{[0,1]}+\varPhi^i_P(\gamma_P))]\cdot\tau_{\ast}^i\right)_{i=1,2,3})\\
	&-\mathcal{I}\mathrm{diag}(\left([\partial_x(\gamma_{\ast}^i+\gamma_P^i)\circ(\Id_{[0,1]}+\varPhi^i_P(\gamma_P))]\cdot\nu_{\ast}^i\right)_{i=1,2,3}).
\end{align*}
Note that 
\begin{align*}
	A(0)=\mathrm{diag}(\left(\partial_x\gamma^i_{\ast}\cdot\tau_{\ast}^i\right)_{i=1,2,3}),
\end{align*}
which is invertible due to Assumption \ref{RemarkAssumstionsGammaast}. Thus, for $\gamma^i_P$ and $\varPhi^i_P(\gamma_P)$ close enough to $0$ in the $C^{1+\alpha}$-norm we see that $A(\gamma_P)^{-1}$ exists and has entries in $C^{1+\alpha}$ using Cramer's rule and the general properties of H\"older continuous functions . Then \eqref{EquationImpliciteDifferentiationResult} shows that $\partial_x\varPhi_P$ is in $C^{1+\alpha}$ and thus $\varPhi_P$ is in $C^{2+\alpha}$.\\ 
    The second part of the claim follows by the fact that due to the regularity of $F$ the mapping $\gamma\to \phi(\gamma_P)$ is continuous, \eqref{EquationImpliciteDifferentiationResult} and \eqref{EquationDerivingH}. The precise calculations are similar to \cite[Appendix B.5]{AcquaPozziSpenerLSIOpenElasticCurve}.
\end{proof}

\subsection{Variational formulas}\label{SectionVariations}
In this section we want to study the anisotropic energy $E(\cdot)$ as a functional on function spaces suitable for the LSI approach we want to use later in the stability analysis. Precisely, we consider the functional
\begin{align*}
	E: h\mapsto E(\Gamma_{h,\mu}),
\end{align*}
 where $\Gamma_{h,\mu}$  is the network given by \eqref{EquationNormalGraphs} (with  $\Gamma_{\ast}$ a reference geometry fulfilling Assumption \ref{RemarkAssumstionsGammaast}), $h=(h_i)_{i=1,2,3}$ is a triplet of sufficiently smooth  height functions and $\mu=\mu(h)$ is the tangential part according to Definition~\ref{defmufinal}.
 
Note that as already mentioned in Section \ref{SectionNotation} we will usually denote $\Gamma_{h,\mu}$ by $\Gamma_h$. Also, we will denote by a lower index $h$ a geometric quantity of the curve $\Gamma_h$. For example, $\nu_h$ will denote the unit normal of $\Gamma_{h}$. 

We will calculate the first and second variation of $E$, which are essential for the application \cite[Theorem 2]{FeehannMaridakisLojasiewiczSimon} (and also the original work \cite{Chill2003OnTheLojasiewiczSimon}, which is the foundation to most works on this topic). For the reader's convenience we state \cite[Theorem 2]{FeehannMaridakisLojasiewiczSimon} in the Appendix as Theorem \ref{TheoremFeehanMaridakisTheo2}. Note that we changed the notation from the original article in the way we will use it in this article. 
We set
\begin{align}
	V&:=\left\{h\in H^2([0,1])^3|\sum_{i=1}^3 \tilde{\alpha}^i_{\ast}h^i(1)=0, \forall i=1,2,3: h^i(0)=0\right\},\label{EquationLSISettingV} \\
	W&:=L^2([0,1])^3\times \R^2,\label{EquationLSISettingW}\\
	V&\subset W: ((h^i)_{i=1,2,3})\mapsto  ((h^i)_{i=1,2,3}, 0, 0),\label{EquationLSISettingVToW} \\
	W&\subset V': ((u^i)_{i=1,2,3}, a^1, a^2)\mapsto \left( (v^i)_{i=1,2,3}\mapsto\sum_{i=1}^3\int_0^1 u^iv^i dx+a^1v^1(1)+a^2v^2(1) \right).\label{EquationLSISettingWToVPrime}
\end{align}
Before we move on, we want to give some more explanations on the four lines above. The real Banach space $V$ gives us the domain for the height functions (and thus the considered curves $\Gamma_h$). As the application of Theorem \ref{TheoremFeehanMaridakisTheo2} basically involves solving an elliptic PDE of second order on $\Gamma_{\ast}$, we will need boundary conditions, which fit to the boundary conditions of our flow. Therefore, we encode $\eqref{EquationGeometricProblem}_2-\eqref{EquationGeometricProblem}_3$ in the definition of the space $V$ for the height function $h$ (see the results from Lemma~\ref{LemmaRelationNormalTangentialPart} and Remark~\ref{RemarkOnMatrixI}). The condition $\eqref{EquationGeometricProblem}_4$ cannot be encoded directly into $V$ and will therefore appear as a part of the gradient of $E$.   This is the reason why we have the factor $\R^2$ in the space $W$. 
Now, the embedding $W\subset V'$ defines us in which way we consider the gradient of the energy functional $E$ (see for this the definition of gradient maps in Definition \ref{DefinitionGradientMap}) and the first factor of $W$ determines how the gradient is measured. As we saw in the introduction that our flow is a $L^2$-gradient flow of the anisotropic length functional, these are reasonable choices. To fully understand the choices one has to go through the calculations in Section \ref{SectionStabAna}, especially \eqref{EquationStabAnaMainCalcu}. Finally, we want to mention that we choose the above spaces in order to apply Theorem~\ref{TheoremFeehanMaridakisTheo2} easily. To work with the precise gradient structure of our flow we will need a more geometric version of Theorem \ref{TheoremLSIanalytic}. We will prove this version in the beginning of Section \ref{SectionStabAna}.

Now with the setting  having been introduced we can calculate the variations of $E: V\to \R$ and write them as elements of $W$, i.e., as $\mathcal{M}$,  whereby $\mathcal{M}$ is a gradient map as in Definition \ref{DefinitionGradientMap}.  To clarify the notation we note that when we write the first variation of $E$, we have that
\begin{align}\label{EquationFirstVariationNormalDirection}
	E'(h_0)h_1=\frac{d}{d\varepsilon}\Big|_{\varepsilon=0}E(\gamma_{h_0}+\varepsilon h_1\nu_{\ast}+\varepsilon\mu_1\tau_{\ast}),
\end{align}
with $\gamma_{h_0}=\gamma_{\ast}+h_0\nu_{\ast}+\mu_0\tau_{\ast}$, and  $\mu_{1}=\mu(h_{1})$ resp. $\mu_{0}=\mu(h_{0})$ according to Definition~\ref{defmufinal}. 
\begin{lemma}[First Variation of $E$]\label{LemmaFirstVariE}\ \\
	There is an open neighborhood $U\subset V$ of $0$ such that for any $h_0, h_1\in U$ we have that
	\begin{align}\label{EquFirstVariationE}
		E'(h_0)h_1=&-\sum_{i=1}^3\int_0^1(D^2\varphi^{\circ}(\nu_{h_0}^i)\tau_{h_0}^i\cdot\tau_{h_0}^i)\kappa_{h_0}^i\nu_{h_0}^i\cdot (h_1^i\nu_{\ast}^i+\mu^i(h_1)\tau_{\ast}^i)\,ds_{h_0}^{i}\\
		&-\sum_{i=1}^3D\varphi^{\circ}(\nu_{h_0}^i(1))\cdot (h_1^i(1)\nu_{\ast}^i(1)+\mu^i(h_1)(1)\tau_{\ast}^i(1))^{\bot}.\notag
	\end{align}
	For the corresponding gradient $\mathcal{M}$ with respect to the embedding $W\subset V'$ 
	yielding $$ E'(h_0)h_1=\langle h_{1},\mathcal{M}(h_0)  \rangle_{V \times V'}=
	\sum_{i=1}^{3} \int_{0}^{1} h^{i}_{1} u^{i} ds_{\ast}^{i} + a^{1} h^{1}_{1}(1) + a^{2}h^{2}_{1}(1)$$
	we have that \begin{align}\label{EquationGradientOfE}
		\mathcal{M}(h_0)=((u^i)_{i=1,2,3}, a^1,a^2) 
	\end{align}
	with
	\begin{align*}
		u^i&=\big[-(D^2\varphi^{\circ}(\nu_{h_0}^i)\tau_{h_0}^i\cdot\tau_{h_0}^i)\kappa_{h_0}^i(\nu_{h_0}^i\cdot\nu_{\ast}^i)+(D^2\varphi^{\circ}(\nu_{h_0}^{i+1})\tau_{h_0}^{i+1}\cdot\tau_{h_0}^{i+1})\kappa_{h_0}^{i+1}(\nu_{h_0}^{i+1}\cdot\tau_{\ast}^{i+1})\frac{c^i}{s^{i+1}}\\
		&-(D^2\varphi^{\circ}(\nu_{h_0}^{i+2})\tau_{h_0}^{i+2}\cdot\tau_{h_0}^{i+2})\kappa_{h_0}^{i+2}(\nu_{h_0}^{i+2}\cdot\tau_{\ast}^{i+2})\frac{c^i}{s^{i+2}}\big]J_h^i,\\
		a^1&= D\varphi^{\circ}(\nu^1_{h_0})\cdot\tau_{\ast}^1+\frac{c^1}{s^{2}}D\varphi^{\circ}(\nu_{h_0}^{2})\cdot\nu_{\ast}^{2}-\frac{c^1}{s^{3}}D\varphi^{\circ}(\nu_{h_0}^{3})\cdot\nu_{\ast}^{3}\\
		&-\frac{\tilde{\alpha}_{\ast}^1}{\tilde{\alpha}_{\ast}^3}\left(D\varphi^{\circ}(\nu^3_{h_0})\cdot\tau_{\ast}^3+\frac{c^3}{s^{1}}D\varphi^{\circ}(\nu_{h_0}^{1})\cdot\nu_{\ast}^{1}-\frac{c^3}{s^{2}}D\varphi^{\circ}(\nu_{h_0}^{2})\cdot\nu_{\ast}^{2}\right), \qquad \text{evaluated at $x=1$}\\
		a^2&=D\varphi^{\circ}(\nu^2_{h_0})\cdot\tau_{\ast}^2+\frac{c^2}{s^{3}}D\varphi^{\circ}(\nu_{h_0}^{3})\cdot\nu_{\ast}^{3}-\frac{c^2}{s^{1}}D\varphi^{\circ}(\nu_{h_0}^{1})\cdot\nu_{\ast}^{1} \\
		&-\frac{\tilde{\alpha}_{\ast}^2}{\tilde{\alpha}_{\ast}^3}\left(D\varphi^{\circ}(\nu^3_{h_0})\cdot\tau_{\ast}^3+\frac{c^3}{s^{1}}D\varphi^{\circ}(\nu_{h_0}^{1})\cdot\nu_{\ast}^{1}-\frac{c^3}{s^{2}}D\varphi^{\circ}(\nu_{h_0}^{2})\cdot\nu_{\ast}^{2}\right), \qquad \text{evaluated at $x=1$}.
	\end{align*}
	Hereby, all appearing upper indices are to be understood modulo $3$ plus $1$, i.e., $4$ equals $1$, $5$ equals $2$ and $6$ equals $3$. Also, we denote by $J_h^i$ the length element of $\Gamma^i_h$, i.e.,
	\begin{align*}
		ds^i_h=J_h^idx.
	\end{align*}

\end{lemma}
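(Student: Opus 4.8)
The plan is to compute the first variation directly from the general variational formula \eqref{EquationVariationAnisotropicLength}, which already gives the first variation of the anisotropic length for arbitrary variations $\Gamma^i_\varepsilon = \im(\gamma^i + \varepsilon \zeta^i)$ with $\zeta^i(0)=0$ and $\zeta^i(1)=\zeta^j(1)$. Here the base curve is $\gamma_{h_0}^i = \gamma_\ast^i + h_0^i \nu_\ast^i + \mu^i(h_0)\tau_\ast^i$, and the variation direction is $\zeta^i = h_1^i \nu_\ast^i + \mu^i(h_1)\tau_\ast^i$. First I would check that this $\zeta$ is admissible: the boundary condition $\zeta^i(0)=0$ holds because $h_1^i(0)=0$ (built into $V$) and hence $\mu^i(h_1)(0)=0$ by Definition~\ref{defmufinal}; the triple-junction compatibility $\zeta^i(1)=\zeta^j(1)$ holds precisely because $h_1\in V$ satisfies \eqref{EqRelationNormalPart} and $\mu(h_1)$ is defined via $\mathcal{I}$, so Lemma~\ref{LemmaRelationNormalTangentialPart} applies. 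This establishes that \eqref{EquationVariationAnisotropicLength} is legitimately applicable and yields \eqref{EquFirstVariationE} verbatim, once one observes that all the geometric quantities $\nu^i, \tau^i, \kappa^i, ds^i$ computed along $\gamma_{h_0}$ are exactly the quantities denoted with subscript $h_0$. The open neighborhood $U\subset V$ is just the set of $h_0$ for which $\gamma_{h_0}$ stays regular and immersed; its existence is standard smallness.

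Next I would rewrite \eqref{EquFirstVariationE} as a pairing $\langle h_1, \mathcal{M}(h_0)\rangle_{V\times V'}$ via the embedding $W\subset V'$ of \eqref{EquationLSISettingWToVPrime}, i.e.\ identify the $L^2$-density $(u^i)$ over $\Gamma_\ast$ and the two boundary scalars $a^1,a^2$. The bulk term: in \eqref{EquFirstVariationE} the integrand involves $\nu_{h_0}^i\cdot(h_1^i\nu_\ast^i + \mu^i(h_1)\tau_\ast^i) = h_1^i(\nu_{h_0}^i\cdot\nu_\ast^i) + \mu^i(h_1)(\nu_{h_0}^i\cdot\tau_\ast^i)$, integrated against $ds_{h_0}^i = J_{h_0}^i\,dx$. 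To express everything as $\sum_i \int_0^1 h_1^i u^i\,ds_\ast^i$ I would first convert $ds_{h_0}^i$ to $J_{h_0}^i\,dx$ and $ds_\ast^i$ to $J_\ast^i\,dx$, absorbing the Jacobian ratio; here the paper's notation $J_h^i$ for $ds_h^i = J_h^i\,dx$ suggests they keep the integral in $dx$-form (note $\Gamma_\ast$ being a straight line of fixed parametrization makes $ds_\ast^i$ a constant multiple of $dx$, which can be absorbed). Then substitute the explicit formula $\mu^i(h_1) = (\mathcal{I} h_1)^i$ from \eqref{EquationCOnstructionTangentialPart}, using the given entries of $\mathcal{I}$ with $s^i = \sin\theta^i_\ast$, $c^i=\cos\theta^i_\ast$, and collect the coefficient of each $h_1^i$. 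Row $i$ of $\mathcal{I}$ feeds $h_1^i$ into the $(i+1)$-th and $(i+2)$-th terms of the tangential sum, which is exactly how the $\frac{c^i}{s^{i+1}}$ and $\frac{c^i}{s^{i+2}}$ contributions with the cyclic index shift appear in the stated $u^i$. This bookkeeping, careful with the cyclic convention ``upper indices mod $3$ plus $1$,'' produces the displayed $u^i$.

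For the boundary term $-\sum_i D\varphi^\circ(\nu_{h_0}^i(1))\cdot(h_1^i(1)\nu_\ast^i(1) + \mu^i(h_1)(1)\tau_\ast^i(1))^\bot$, I would first compute $(\nu_\ast^i)^\bot = -\tau_\ast^i$ and $(\tau_\ast^i)^\bot = \nu_\ast^i$ (anticlockwise $90^\circ$ rotation), so the bracket becomes $-h_1^i(1)\,D\varphi^\circ(\nu_{h_0}^i(1))\cdot\tau_\ast^i + \mu^i(h_1)(1)\,D\varphi^\circ(\nu_{h_0}^i(1))\cdot\nu_\ast^i$, summed over $i$ with an overall minus. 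This is a linear functional of the three numbers $h_1^1(1),h_1^2(1),h_1^3(1)$ — but these are \emph{not} independent, being constrained by $\sum_i \tilde\alpha_\ast^i h_1^i(1)=0$. So I would eliminate $h_1^3(1) = -\frac{\tilde\alpha_\ast^1}{\tilde\alpha_\ast^3}h_1^1(1) - \frac{\tilde\alpha_\ast^2}{\tilde\alpha_\ast^3}h_1^2(1)$, substitute the explicit $\mathcal{I}$-entries for the $\mu^i(h_1)(1)$ (again in terms of $h_1^1(1),h_1^2(1),h_1^3(1)$ and then re-eliminate $h_1^3(1)$), and read off the coefficients of $h_1^1(1)$ and $h_1^2(1)$ as $a^1$ and $a^2$ respectively. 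The terms $-\frac{\tilde\alpha_\ast^1}{\tilde\alpha_\ast^3}(\dots)$ and $-\frac{\tilde\alpha_\ast^2}{\tilde\alpha_\ast^3}(\dots)$ in the stated $a^1,a^2$ are exactly the contributions coming from this elimination of $h_1^3(1)$, and the parenthesized expression is the coefficient that multiplied $h_1^3(1)$ before substitution — consistent with the structure of the displayed formulas.

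The main obstacle is purely organizational rather than conceptual: the linear-algebra bookkeeping of substituting $\mu(h_1)=\mathcal{I}h_1$ everywhere, tracking the cyclic index convention, and correctly eliminating the dependent variable $h_1^3(1)$ using the constraint, all while keeping the Jacobian factors $J_{h_0}^i$ straight. One should double-check that the coefficient multiplying $h_1^3(1)$ before elimination is itself symmetric in form to $a^1$ (up to index shift) so that the factored presentation in the statement is correct; this is the one place an error could creep in. I would verify the final expressions by a sanity check at $h_0=0$: there $\nu_{h_0}^i = \nu_\ast^i$, $\kappa_{h_0}^i=0$, so $u^i=0$, and $a^1,a^2$ should reduce to combinations of $D\varphi^\circ(\nu_\ast^i)\cdot\tau_\ast^i$ and $D\varphi^\circ(\nu_\ast^i)\cdot\nu_\ast^i$; using Lemma~\ref{lemma2.1}(ii) ($D\varphi^\circ(\nu_\ast^i)\cdot\nu_\ast^i = \varphi^\circ(\nu_\ast^i)$) and the force-balance $\sum_i \tilde\alpha_\ast^i\nu_\ast^i=0$ from \eqref{EquationDefinitionOfAlphaTildeAst} together with the Young relation \eqref{EquationYoungsModulus}, one should find $a^1=a^2=0$, reflecting that $\Gamma_\ast$ is a critical point — this is the natural consistency test for the whole computation.
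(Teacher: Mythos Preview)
Your proposal is correct and follows essentially the same approach as the paper: derive \eqref{EquFirstVariationE} by specializing the general first-variation formula \eqref{EquationVariationAnisotropicLength} to the variation direction $\zeta^i=h_1^i\nu_\ast^i+\mu^i(h_1)\tau_\ast^i$, then substitute $\mu(h_1)=\mathcal{I}h_1$ and collect coefficients of $h_1^i$ for the bulk terms, and for the boundary terms use $(\nu_\ast^i)^\bot=-\tau_\ast^i$, $(\tau_\ast^i)^\bot=\nu_\ast^i$ together with the elimination $h_1^3(1)=-\frac{\tilde\alpha_\ast^1}{\tilde\alpha_\ast^3}h_1^1(1)-\frac{\tilde\alpha_\ast^2}{\tilde\alpha_\ast^3}h_1^2(1)$. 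Your additional admissibility check for $\zeta$ and the sanity check at $h_0=0$ are not in the paper but are welcome elaborations.
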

\begin{proof}
	The first formula \eqref{EquFirstVariationE} is a direct consequence from \cite[P. 7]{KroenerNovagaPozziAnisoCurvFloImmeredNetworks} applied to the specific variation \eqref{EquationFirstVariationNormalDirection}. Now to rewrite this as a gradient with respect to $W\subset V'$, we have to write the $\mu^i(h_1)$ in terms of the $h_1^i$. Using \eqref{EqRelationNormalTangentialPart} we have that
	\begin{align*}
		\mu^1(h_1)\tau_{\ast}^1&=\left(\frac{c^2}{s^1}h_1^2-\frac{c^3}{s^1}h_1^3\right)\tau_{\ast}^1,\\
		\mu^2(h_1)\tau_{\ast}^2&=\left(\frac{c^3}{s^2}h_1^3-\frac{c^1}{s^2}h_1^1\right)\tau_{\ast}^2,\\
		\mu^3(h_1)\tau_{\ast}^3&=\left(\frac{c^1}{s^3}h_1^1-\frac{c^2}{s^3}h_1^2\right)\tau_{\ast}^3.
	\end{align*}
In other words
	\begin{align*}
	\mu^i(h_1)\tau_{\ast}^i&=\left(\frac{c^{i+1}}{s^i}h_1^{i+1}-\frac{c^{i+2}}{s^i}h_1^{i+2}\right) \tau_{\ast}^i \qquad  i=1,2,3.
	\end{align*}
	Then the result for the $u^i$ follows by sorting the terms from the $\mu^i(h_1)$ by $h_1^i$. For the boundary terms it is a bit more complicated as we also have to rewrite $h_1^3$ in terms of $h_1^1$ and $h_1^2$ as we can only have two components here. To begin with we observe - again using \eqref{EqRelationNormalTangentialPart} and the facts that $(\nu_{\ast}^i)^{\bot}=-\tau_{\ast}^i$ and $(\tau_{\ast}^i)^{\bot}=\nu_{\ast}^i$ - that the boundary terms at $x=1$ - so the second line in \eqref{EquFirstVariationE} - equals
	\begin{align*}
	&-\sum_{i=1}^3D\varphi^{\circ}(\nu_{h_0}^i(1))\cdot (h_1^i(1)\nu_{\ast}^i(1)+\mu^i(h_1)(1)\tau_{\ast}^i(1))^{\bot}\\
	&	=\sum_{i=1}^3h_1^i\left( D\varphi^{\circ}(\nu^i_{h_0})\cdot\tau_{\ast}^i+\frac{c^i}{s^{i+1}}D\varphi^{\circ}(\nu_{h_0}^{i+1})\cdot\nu_{\ast}^{i+1}-\frac{c^i}{s^{i+2}}D\varphi^{\circ}(\nu_{h_0}^{i+2})\cdot\nu_{\ast}^{i+2}\right)\Big|_{x=1}.
	\end{align*}
	To rewrite $h_1^3$ we observe that \begin{align}\label{EquRewrtingH3}
		h_1^3=-\frac{\tilde{\alpha}_{\ast}^1}{\tilde{\alpha}_{\ast}^3}h_1^1-\frac{\tilde{\alpha}_{\ast}^2}{\tilde{\alpha}_{\ast}^3}h_1^2.
	\end{align}
	Again resorting terms gives the sought formulas.
\end{proof}
Before we also give formulas for the second variation of $E$ in $0$ we recall that for $h_0,h_1\in V$ we have that
\begin{align*}
	E''(0)h_0h_1=\frac{d}{d\varepsilon}\frac{d}{d\eta}\Big|_{\varepsilon=0,\eta=0}E(\gamma_{\ast}+\varepsilon h_0\nu_{\ast}+\eta h_1\nu_{\ast}+\varepsilon\mu_0\tau_{\ast}+\eta\mu_1\tau_{\ast}).
\end{align*} 
again with  $\mu_{1}=\mu(h_{1})$ resp. $\mu_{0}=\mu(h_{0})$ according to Definition~\ref{defmufinal}.
Note that due to Definition~\ref{DefinitionGradientMap} we can use $E''(0)$ to calculate $\mathcal{M}'(0)$.
\begin{lemma}[Second Variation of $E$ in $0$]\label{LemmaSecondVariE}\ \\
	For any $h_0, h_1\in V$ we have that
		\begin{align}\label{EquSecondVariationE}
			E''(0)h_0h_1=-&\sum_{i=1}^3\int_0^1 (D^2\varphi^{\circ}(\nu_{\ast}^i))\tau_{\ast}^i\cdot\tau_{\ast}^i)\left(\frac{(h_0^i)''}{|(\gamma_{\ast}^i)'|^2}-\frac{(h_0^i)'\langle(\gamma_{\ast}^{i})'',\tau^{i}_{\ast}\rangle}{|(\gamma_{\ast}^i)'|^3}\right)h_1^ids_{\ast}^i\\
			+&\sum_{i=1}^3(h_0^i)'(1)h_1^i(1)\left(D^2\varphi^{\circ}(\nu_{\ast}^i(1))\tau_{\ast}^i(1)\cdot\tau_{\ast}^i(1)\right).\notag
	\end{align}
	As a consequence we obtain for  $\mathcal{M}'(0)$ 
	$$ E''(0)h_0h_1=\langle h_{1},\mathcal{M}'(0)(h_0)  \rangle_{V \times V'}=
	\sum_{i=1}^{3} \int_{0}^{1} h^{i}_{1} u^{i} ds_{\ast}^{i} + a^{1} h^{1}_{1}(1) + a^{2}h^{2}_{1}(1)$$
     that
	\begin{align}\label{EquationGradientDerivative}
		\mathcal{M}'(0)(h_0)=((u^i)_{i=1,2,3}, a^1,a^2)
	\end{align}
	with
		\begin{align*}
			u^i&=-(D^2\varphi^{\circ}(\nu_{\ast}^i))\tau_{\ast}^i\cdot\tau_{\ast}^i)\left(\frac{(h_0^i)''}{|(\gamma_{\ast}^i)'|^2}-\frac{(h_0^i)'\langle(\gamma_{\ast}^{i})'',\tau_{\ast}^{i}\rangle}{|(\gamma_{\ast}^i)'|^3}\right)J^i_{\ast},\\
			a^1&=(h_0^1)'(1)\DZweiTauNu{1}-\frac{\tilde{\alpha}_{\ast}^1}{\tilde{\alpha}_{\ast}^3}(h_0^3)'(1)\DZweiTauNu{3},\\
			a^2&=(h_0^2)'(1)\DZweiTauNu{2}-\frac{\tilde{\alpha}_{\ast}^2}{\tilde{\alpha}_{\ast}^3}(h_0^3)'(1)\DZweiTauNu{3}.
		\end{align*}
		Hereby, the $J^i_{\ast}$ denote the length element of $\Gamma^i_{\ast}$, i.e.,
		\begin{align*}
			ds^i_{\ast}=J^i_{\ast}dx.
	\end{align*} 
\end{lemma}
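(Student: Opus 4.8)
The plan is to compute $E''(0)h_0h_1$ by differentiating the first variation formula \eqref{EquFirstVariationE} from Lemma \ref{LemmaFirstVariE} along the curve $\varepsilon\mapsto h_0+\varepsilon h_1$ — more precisely, to write $E''(0)h_0h_1 = \frac{d}{d\varepsilon}\big|_{\varepsilon=0} E'(\varepsilon h_0)h_1$ — and then to evaluate all the geometric quantities at the reference configuration $\Gamma_\ast$, exploiting heavily that $\Gamma_\ast$ consists of three straight segments, so $\kappa_\ast^i \equiv 0$ and $\nu_\ast^i, \tau_\ast^i$ are constant along each curve. First I would record which terms in \eqref{EquFirstVariationE} survive the differentiation: the bulk integrand is a product of $\kappa_{h_0}^i$ with smooth functions of $\nu_{h_0}^i$; since $\kappa_\ast^i=0$, the only contribution at $\varepsilon=0$ comes from differentiating the curvature factor $\kappa_{h_0}^i$ itself, while $D^2\varphi^\circ(\nu_{h_0}^i)\tau_{h_0}^i\cdot\tau_{h_0}^i$, $\nu_{h_0}^i\cdot\nu_\ast^i$, $\nu_{h_0}^i\cdot\tau_\ast^i$ and the length element $J_h^i$ may all be frozen at their $\Gamma_\ast$-values (in particular $\nu_\ast^i\cdot\nu_\ast^i=1$, $\nu_\ast^i\cdot\tau_\ast^i=0$, so the tangential part $\mu(h_1)$ drops out of the bulk term entirely).

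The key computation is therefore the linearization of the scalar curvature $\kappa$ of the graph $\gamma_\ast^i + h^i\nu_\ast^i + \mu^i\tau_\ast^i$ at $h=0$. Using the standard formula $\kappa = \frac{u_{xx}\cdot\nu}{|u_x|^2}$ and differentiating in $h^i$ at $0$, with $(\gamma_\ast^i)'$ parallel to $\tau_\ast^i$ and $(\gamma_\ast^i)''$ its derivative, one gets exactly
$\frac{d}{d\varepsilon}\big|_0 \kappa_{\varepsilon h_0}^i = \frac{(h_0^i)''}{|(\gamma_\ast^i)'|^2} - \frac{(h_0^i)'\langle (\gamma_\ast^i)'',\tau_\ast^i\rangle}{|(\gamma_\ast^i)'|^3}$,
the second term accounting for a non–arclength parametrization of the straight segment. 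Substituting this into the bulk integral of \eqref{EquFirstVariationE} and keeping only $h_1^i\nu_\ast^i$ against $\nu_\ast^i$ gives the first line of \eqref{EquSecondVariationE}. For the boundary terms I would differentiate $-\sum_i D\varphi^\circ(\nu_{h_0}^i(1))\cdot(h_1^i(1)\nu_\ast^i(1)+\mu^i(h_1)(1)\tau_\ast^i(1))^\bot$ in $\varepsilon$: only $D\varphi^\circ(\nu_{h_0}^i(1))$ depends on $\varepsilon$, its derivative is $D^2\varphi^\circ(\nu_\ast^i)\,\dot\nu_{h_0}^i(1)$, and the linearization of the unit normal of the graph at the endpoint is $\dot\nu^i(1) = -\frac{(h_0^i)'(1)}{|(\gamma_\ast^i)'|}\tau_\ast^i$ up to the parametrization factor; pairing this against $(h_1^i(1)\nu_\ast^i(1))^\bot = -h_1^i(1)\tau_\ast^i$ and using Lemma \ref{lemma2.1}iii.\ ($D^2\varphi^\circ(\nu_\ast^i)\nu_\ast^i=0$, so the $\mu$–contribution vanishes) produces the term $(h_0^i)'(1)h_1^i(1)\,(D^2\varphi^\circ(\nu_\ast^i(1))\tau_\ast^i(1)\cdot\tau_\ast^i(1))$, i.e.\ the second line of \eqref{EquSecondVariationE}.

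Finally, to extract $\mathcal{M}'(0)(h_0)$ I would read off the bulk density $u^i$ directly from the integrand of \eqref{EquSecondVariationE} after writing $\int_0^1(\cdots)h_1^i\,ds_\ast^i = \int_0^1(\cdots)h_1^i J_\ast^i\,dx$, and then rewrite the boundary sum $\sum_{i=1}^3 (h_0^i)'(1)h_1^i(1)(D^2\varphi^\circ(\nu_\ast^i)\tau_\ast^i\cdot\tau_\ast^i)$ as a functional of $(h_1^1(1),h_1^2(1))$ only, using the constraint $h_1^3(1) = -\frac{\tilde\alpha_\ast^1}{\tilde\alpha_\ast^3}h_1^1(1) - \frac{\tilde\alpha_\ast^2}{\tilde\alpha_\ast^3}h_1^2(1)$ from \eqref{EqRelationNormalPart} (exactly as in the proof of Lemma \ref{LemmaFirstVariE}, cf.\ \eqref{EquRewrtingH3}); collecting the coefficients of $h_1^1(1)$ and $h_1^2(1)$ gives $a^1$ and $a^2$. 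I expect the main obstacle to be purely bookkeeping: correctly tracking the non–arclength parametrization factors $|(\gamma_\ast^i)'|$ and the term $\langle(\gamma_\ast^i)'',\tau_\ast^i\rangle$ through the linearization of the curvature and of the unit normal, and making sure the tangential variation $\mu(h_1)$ really decouples (which it does, thanks to Lemma \ref{lemma2.1}iii.\ and the orthogonality $\nu_\ast^i\perp\tau_\ast^i$). No genuinely hard analytic step is involved since $\Gamma_\ast$ is flat.
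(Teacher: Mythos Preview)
Your plan is correct and would lead to the same result, but it takes a different route from the paper. You propose to compute $E''(0)h_0h_1=\frac{d}{d\varepsilon}\big|_{\varepsilon=0}E'(\varepsilon h_0)h_1$ by differentiating the first-variation formula \eqref{EquFirstVariationE}, which forces you to linearize $\kappa^i_{\varepsilon h_0}$, $\nu^i_{\varepsilon h_0}$ and $J^i_{\varepsilon h_0}$ separately and then argue that most contributions vanish thanks to $\kappa_\ast^i\equiv 0$ and Lemma~\ref{lemma2.1}iii.). The paper instead goes back to the homogeneous form $E(\Gamma)=\sum_i\int_0^1\varphi^{\circ}((\gamma^i_x)^\bot)\,dx$ and differentiates \emph{twice} before integrating by parts; this gives immediately the symmetric expression $\sum_i\int_0^1|(\gamma_\ast^i)_x|^{-1}(D^2\varphi^{\circ}(\nu_\ast^i)\tau_\ast^i\cdot\tau_\ast^i)(h_0^i)'(h_1^i)'\,dx$, from which \eqref{EquSecondVariationE} follows by a single integration by parts. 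The advantage of the paper's route is that the tangential parts $\mu(h_0),\mu(h_1)$ drop out in one stroke via Lemma~\ref{lemma2.1}iii.), and no separate linearization of curvature or normal is needed; your route has the (minor) advantage of reusing \eqref{EquFirstVariationE} verbatim, but requires more bookkeeping --- in particular the signs in the boundary term are delicate (your sketch of pairing $D^2\varphi^{\circ}(\nu_\ast^i)\dot\nu^i$ against $(h_1^i\nu_\ast^i)^\bot$ must be tracked carefully to land on the $+\sum_i$ in \eqref{EquSecondVariationE}). The extraction of $a^1,a^2$ via \eqref{EquRewrtingH3} is identical in both approaches.
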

\begin{proof}
	Before we calculate $E''(0)$ we observe that due to the homogeneity of the norm we have for a network $\Gamma$ parametrized by $(\gamma^i)_{i=1,2,3}$ that
		\begin{align*}
			E(\Gamma)=\sum_{i=1}^3\int_{\Gamma^i}\varphi^{\circ}(\nu^i)ds^i=\sum_{i=1}^3\int_0^1\varphi^{\circ}((\gamma^i_x)^{\bot}|(\gamma^i)_x|^{-1})|\gamma^i_x|dx=\sum_{i=1}^3\int_0^1\varphi^{\circ}((\gamma^i_x)^{\bot})dx.
		\end{align*}
	Using this identity and the fact that $\tau_{\ast}^i$ and $\nu_{\ast}^i$ are constant on $\Gamma^i_{\ast}$ for $i=1,2,3$, we see directly that
	\begin{align*}
		E''(0)h_0h_1&=\sum_{i=1}^3\int_0^1D^2\varphi^{\circ}((\gamma_{\ast}^i)_x^{\bot})((h_0^i)'\nu^i_{\ast}+(\mu^i(h_0))'\tau_{\ast}^i)^{\bot}((h_1^i)'\nu^i_{\ast}+(\mu^i(h_1))'\tau^i_{\ast})^{\bot}dx\\
		&=\sum_{i=1}^3\int_0^1|(\gamma^i_{\ast})_x|^{-1}D^2\varphi^{\circ}(\nu^i_{\ast})((h_0^i)'\nu^i_{\ast}+(\mu^i(h_0))'\tau_{\ast}^i)^{\bot}((h_1^i)'\nu^i_{\ast}+(\mu^i(h_1))'\tau^i_{\ast})^{\bot}dx\\
		&=\sum_{i=1}^3\int_0^1|(\gamma^i_{\ast})_x|^{-1}(D^2\varphi^{\circ}(\nu^i_{\ast})\tau_{\ast}^i\cdot\tau_{\ast}^i)(h_0^i)'(h_1^i)'dx.
	\end{align*}
Here, we used in the second line the homogeneity of $\varphi^{\circ}$ and in the third line Lemma \ref{lemma2.1}iii.). Now using integration by parts we see that
\begin{align*}
		E''(0)h_0h_1=-&\sum_{i=1}^3\int_0^1 (D^2\varphi^{\circ}(\nu_{\ast}^i))\tau_{\ast}^i\cdot\tau_{\ast}^i)\left(\frac{(h_0^i)''}{|(\gamma_{\ast}^i)'|}-\frac{(h_0^i)'\langle(\gamma_{\ast}^{i})'',\tau_{\ast}^{i}\rangle}{|(\gamma_{\ast}^i)'|^2}\right)h_1^idx\\
	+&\sum_{i=1}^3\left[|(\gamma^i_{\ast})_x|^{-1}(h_0^i)'h_1^i\left(D^2\varphi^{\circ}(\nu_{\ast}^i)\tau_{\ast}^i\cdot\tau_{\ast}^i\right)\right]_0^1\\
	=-&\sum_{i=1}^3\int_0^1 (D^2\varphi^{\circ}(\nu_{\ast}^i))\tau_{\ast}^i\cdot\tau_{\ast}^i)\left(\frac{(h_0^i)''}{|(\gamma_{\ast}^i)'|^2}-\frac{(h_0^i)'\langle(\gamma_{\ast}^{i})'',\tau_{\ast}^{i}\rangle}{|(\gamma_{\ast}^i)'|^3}\right)h_1^ids^i_{\ast}\\
	+&\sum_{i=1}^3(h_0^i)'(1)h_1^i(1)\left(D^2\varphi^{\circ}(\nu_{\ast}^i(1))\tau_{\ast}^i(1)\cdot\tau_{\ast}^i(1)\right).
\end{align*}
Hereby, we used in the second step the boundary conditions for $h_1^i$ due to the definition of $V$ and \eqref{EquationConditionParaOfGammaAst}. In total we showed 
\eqref{EquSecondVariationE}.

	Concerning the second part of the claim, 
	the formula for the $u^i$ is trivial. For the boundary terms we will use the abbreviation
	\begin{align}\label{EquAbbrevD2}
		D^i:=\DZweiTauNu{i}.
	\end{align}
	Using \eqref{EquRewrtingH3} we can rewrite the boundary terms to
	\begin{align*}
		(h_0^1)'(1)h_1^1(1)D^1+(h_0^2)'(1)h_1^2(1)D^2+(h_0^3)'(-\frac{\tilde{\alpha}_{\ast}^1}{\tilde{\alpha}_{\ast}^3}h_1^1(1)-\frac{\tilde{\alpha}_{\ast}^2}{\tilde{\alpha}_{\ast}^3}h_1^2(1))D^3.
	\end{align*}
	Sorting the terms by $h_1^1(1)$ and $h_1^2(1)$ gives the wished result. 
\end{proof}

\subsection{\L ojasiewicz-Simon gradient inequality}\label{SectionLoja}
In this section we will show the prerequisites to apply Theorem \ref{TheoremFeehanMaridakisTheo2}. These are analyticity of $\mathcal{M}: U\to W$ for some neighborhood $0\in U\subset V$ and that $\mathcal{M}'(0): V\to W$ is a Fredholm operator of index $0$. We will do this in Lemma \ref{LemmaAnalyticityEPrime} and Lemma \ref{LemmaFredholmPropEPrimePrime}. Before we tackle the first result, we shortly recall the definition of analytic operators (see, e.g., \cite[Definition 8.8]{ZeidlerBook1}) and some basic properties. 
\begin{defi}[Analytic operator]\label{DefinitionAnalyticOperator}\ \\
	Let $X,Y$ be Banach spaces and $U\subset X$ open. A map $f: U\to Y$ is analytic in $x_0\in U$ if there is a sequence $(a_k)_{k\in\N_0}$, where $a_k: X^k\to Y$ is a $k$-linear, symmetric and continuous map for each $k\in\N_0$, such that on a neighborhood of $x_0$ we have that
	\begin{align*}
		\sum_{k=0}^{\infty}\|a_k\|\ \|x-x_0\|_X^k\text{ converges and } f(x)=\sum_{k=0}^{\infty}a_k(x-x_0)^k.
	\end{align*}
	Note that when we write $a_k(x-x_0)^k$ we mean $a_k(\underbrace{x-x_0,\cdots,x-x_0}_{k-\text{times}})$.
\end{defi}
\begin{lemma}[Properties of analytic function]\label{LemmaPropertiesAnalyticFunction}\ \\
	Let $X, Y, \tilde{Y}, Z$ Banach spaces, $U\subset X$ open, $f_1,f_2: U\to Y, g: U\to\tilde{Y}$ analytic in $x_0\in U$ and $\tilde{g}: \tilde{Y}\to Z$ analytic in $\im(g)$. Then we have the following properties.
	\begin{itemize}
		\item[i.)] Analyticity in one point implies analyticity in a neighborhood of that point. 
		\item[ii.)] $f_1+f_2: U\to Y$ is analytic in $x_0$.
		\item[iii.)] If $\cdot: Y\times\tilde{Y}\to Z$ is a bilinear, continuous mapping, then 
		$$f_1\cdot g: U\to Z, x\mapsto f_1(x)\cdot g(x),$$
		is analytic in $x_0$. 
		\item[iv.)] The map $\tilde{g}\circ g: U\to Z$ is analytic in $x_0$.
		\item[v.)] Any bounded affine map is analytic.  
	\end{itemize}
\end{lemma}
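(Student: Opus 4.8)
The plan is to treat the five items by the standard formal manipulations of power series in Banach spaces; all statements are classical (see e.g. \cite{ZeidlerBook1} and the references therein), the only point requiring care being the justification of the various rearrangements, which in each case follows from absolute convergence of a majorant series of nonnegative reals together with Fubini for unconditionally convergent series in a Banach space. Items v.) and ii.) are immediate. For v.), if $f(x)=Lx+b$ with $L$ bounded linear, put $a_0=f(x_0)$, $a_1=L$ and $a_k=0$ for $k\ge 2$; the defining series reduces to $f(x_0)+L(x-x_0)$ and converges on all of $X$. For ii.), if $f_j(x)=\sum_{k\ge0}a_k^{(j)}(x-x_0)^k$ on a common ball $B_r(x_0)$, then $(f_1+f_2)(x)=\sum_{k\ge0}(a_k^{(1)}+a_k^{(2)})(x-x_0)^k$, and $\sum_{k\ge0}\|a_k^{(1)}+a_k^{(2)}\|\,r^k\le\sum_{k\ge0}\|a_k^{(1)}\|\,r^k+\sum_{k\ge0}\|a_k^{(2)}\|\,r^k<\infty$, so the sum is analytic at $x_0$.

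For iii.) I would use the Cauchy product. With $f_1(x)=\sum_{k\ge0}a_k(x-x_0)^k$, $g(x)=\sum_{k\ge0}b_k(x-x_0)^k$ and $B(\cdot,\cdot)$ the given bounded bilinear map, set $c_n$ equal to the symmetrization of the $n$-linear continuous map obtained from $\sum_{j=0}^{n}B(a_j(\,\cdot\,),b_{n-j}(\,\cdot\,))$. The estimate $\sum_{n\ge0}\|c_n\|\,r^n\le\|B\|\left(\sum_{k\ge0}\|a_k\|\,r^k\right)\left(\sum_{k\ge0}\|b_k\|\,r^k\right)$ gives absolute convergence on a small ball, which both legitimizes the rearrangement showing $B(f_1(x),g(x))=\sum_{n\ge0}c_n(x-x_0)^n$ and yields the required convergence. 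Item iv.) is handled analogously with a nested substitution: write $g(x)=y_0+w(x)$ with $y_0=g(x_0)$ and $w(x)=\sum_{k\ge1}b_k(x-x_0)^k$, and $\tilde g(y_0+v)=\sum_{m\ge0}d_m v^m$ valid for $\|v\|<\rho$; choose $r>0$ so small that $\sum_{k\ge1}\|b_k\|\,r^k<\rho$. Substituting the series for $w$ into the series for $\tilde g$ and expanding each $w(x)^m$ by multilinearity produces a double series bounded in norm by $\sum_{m\ge0}\|d_m\|\left(\sum_{k\ge1}\|b_k\|\,r^k\right)^m<\infty$, hence absolutely convergent; reordering it by total degree in $(x-x_0)$ gives a genuine power series representing $\tilde g\circ g$ near $x_0$.

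Finally, item i.) is the re-expansion of an analytic map at a nearby base point: given a convergent expansion on $B_r(x_0)$ and $x_1\in B_r(x_0)$, write $(x-x_0)=(x-x_1)+(x_1-x_0)$, expand each $a_k\big((x-x_1)+(x_1-x_0)\big)^k$ by multilinearity, and collect the terms of equal degree in $(x-x_1)$; the resulting coefficients have norms that are summable against $\|x-x_1\|^n$ whenever $\|x-x_1\|+\|x_1-x_0\|<r$, and the same absolute-convergence bound justifies the rearrangement, so $f$ is analytic at $x_1$. The one genuinely technical point, and the step I expect to cost the most writing, is the absolute-convergence bookkeeping that legitimizes the substitution of one power series into another in iv.) (and, to a lesser extent, the regrouping in i.) and iii.)); once the radii are chosen so that the relevant majorant series converge this is routine, and since none of it is specific to our situation I would keep the proof brief and cite a standard reference for the elementary estimates.
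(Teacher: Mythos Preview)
Your proposal is correct and follows the same standard power-series approach as the paper; the paper's own proof is in fact far terser than yours, merely pointing to the Cauchy product for iii.), citing \cite[p.~1079]{WhittleseyAnalyticFunctionBanachSpaces} for iv.), and dismissing i.), ii.), v.) as immediate from the definition. Your write-up supplies exactly the details (re-expansion for i.), majorant estimates for iii.) and iv.)) that the paper leaves to the reader or to references.
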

\begin{proof}
	The first part follows directly from the definition of analyticity. For ii.) we just add the power series. For iii.) one can argue similar to the situation for the Cauchy product of real valued analytic functions. Part iv.) is for example discussed on \cite[p. 1079]{WhittleseyAnalyticFunctionBanachSpaces}. Finally, the last part is obvious as one can choose in the power series $a_k\equiv 0$ for all $k\ge 2$.
\end{proof}
\begin{lemma}[Analyticity of $\mathcal{M}$]\label{LemmaAnalyticityEPrime}\ \\
	Let $V,W, V\subset W\subset V'$  be as in \eqref{EquationLSISettingV}-\eqref{EquationLSISettingWToVPrime}.
	There is a neighborhood $U\subset V$ of $0$ such that $\mathcal{M}: U\to W$ is analytic, where $\mathcal{M}$ is as in \eqref{EquationGradientOfE}. 
\end{lemma}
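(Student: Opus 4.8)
The plan is to show that $\mathcal{M}: U \to W$ decomposes into finitely many building blocks, each of which is analytic, and then to invoke the closure properties of analytic maps from Lemma~\ref{LemmaPropertiesAnalyticFunction} (sums, products via bounded bilinear maps, and compositions with analytic maps). Recall from Lemma~\ref{LemmaFirstVariE} that $\mathcal{M}(h_0) = ((u^i)_{i=1,2,3}, a^1, a^2)$ where each $u^i$ is (a linear combination, with constant coefficients, of) terms of the form $(D^2\varphi^{\circ}(\nu_{h_0}^i)\tau_{h_0}^i\cdot\tau_{h_0}^i)\,\kappa_{h_0}^i\,(\nu_{h_0}^i\cdot e)\,J_h^i$ for $e \in \{\nu_{\ast}^j,\tau_{\ast}^j\}$, and each $a^k$ is a constant-coefficient linear combination of terms $D\varphi^{\circ}(\nu_{h_0}^i(1)) \cdot e$. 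So it suffices to check analyticity of the maps
\begin{align*}
	h_0 \mapsto \nu_{h_0}^i, \quad h_0 \mapsto \tau_{h_0}^i, \quad h_0 \mapsto J_h^i, \quad h_0 \mapsto \kappa_{h_0}^i
\end{align*}
as maps from (a neighborhood of $0$ in) $V$ into the appropriate target spaces (mostly $C([0,1],\R^2)$ or $L^2$, with the trace map $C([0,1])\to\R$ giving the boundary values), together with analyticity of $p \mapsto D\varphi^{\circ}(p)$ and $p \mapsto D^2\varphi^{\circ}(p)$ on $\R^2\setminus\{0\}$.

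First I would fix the underlying parametrization: since $\mu = \mu(h_0) = \mathcal{I} h_0$ is a bounded \emph{linear} map of $h_0$ (Definition~\ref{defmufinal}), the map $h_0 \mapsto \gamma_{h_0} = \gamma_{\ast} + h_0\nu_{\ast} + \mu(h_0)\tau_{\ast}$ is a bounded affine map $V \to C^{2+\alpha}([0,1],\R^2)^3$, hence analytic by Lemma~\ref{LemmaPropertiesAnalyticFunction}v.); consequently $h_0 \mapsto (\gamma_{h_0})_x$ and $h_0 \mapsto (\gamma_{h_0})_{xx}$ are bounded linear, hence analytic, into $C^{1+\alpha}$ resp.\ $C^{\alpha}$ (and in particular into $C([0,1],\R^2)^3$). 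Next, on a neighborhood $U$ of $0$ the curves $\gamma_{h_0}^i$ remain regular, so $|(\gamma_{h_0}^i)_x|$ is bounded away from $0$; then $J_h^i = |(\gamma_{h_0}^i)_x|$ is analytic because $p \mapsto |p| = \sqrt{p\cdot p}$ is analytic on $\R^2\setminus\{0\}$ (it is the composition of the analytic real function $\sqrt{\cdot}$ near a positive value with the analytic map $p\mapsto p\cdot p$), and $h_0 \mapsto 1/J_h^i$ is analytic since $t \mapsto 1/t$ is analytic away from $0$. From this, $\tau_{h_0}^i = (\gamma_{h_0}^i)_x / J_h^i$ and $\nu_{h_0}^i = (\tau_{h_0}^i)^{\bot}$ are analytic (product of an analytic vector-valued map with an analytic scalar map, plus a bounded linear rotation), as is $\kappa_{h_0}^i\nu_{h_0}^i = (\gamma_{h_0}^i)_{xx}/(J_h^i)^2 - \big((\gamma_{h_0}^i)_{xx}\cdot \tau_{h_0}^i\big)\tau_{h_0}^i/(J_h^i)^2$, so that $\kappa_{h_0}^i = \kappa_{h_0}^i\nu_{h_0}^i \cdot \nu_{h_0}^i$ is analytic into $C([0,1])$ (or $L^2$).

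For the anisotropy factors I would use that $\varphi^{\circ} \in C^{\infty}(\R^2\setminus\{0\})$; in fact an elliptic smooth norm is real-analytic away from the origin (being the support function of the strictly convex, smooth Wulff shape; alternatively, one may simply assume $\varphi^{\circ}$ real-analytic on $\R^2\setminus\{0\}$, which is harmless for all examples of interest and is the standing hypothesis we invoke). Hence $p \mapsto D\varphi^{\circ}(p)$ and $p \mapsto D^2\varphi^{\circ}(p)$ are analytic $\R^2\setminus\{0\}\to\R^2$ resp.\ $\to\R^{2\times 2}$, and since $\nu_{h_0}^i$ takes values in a compact subset of $\R^2\setminus\{0\}$ (it is a unit vector), the superposition operators $h_0 \mapsto D\varphi^{\circ}(\nu_{h_0}^i)$ and $h_0 \mapsto D^2\varphi^{\circ}(\nu_{h_0}^i)$ are analytic into $C([0,1],\cdot)$ by the composition rule (Lemma~\ref{LemmaPropertiesAnalyticFunction}iv.), combined with analyticity of the superposition operator induced by an analytic function acting on a bounded-range continuous function — this is precisely the delicate point and I would either cite a standard reference (e.g.\ \cite{ZeidlerBook1}) or give the direct Taylor-series estimate). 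Assembling: $u^i$ is a constant-coefficient sum of products of the form (scalar analytic) $\cdot$ (scalar analytic) $\cdot$ (scalar analytic), each product interpreted via the bounded bilinear multiplication $C([0,1])\times C([0,1])\to C([0,1]) \hookrightarrow L^2([0,1])$; and $a^1, a^2$ are constant-coefficient sums of $D\varphi^{\circ}(\nu_{h_0}^i)\cdot e$ composed with the bounded linear trace $C([0,1])\to\R$, $f\mapsto f(1)$. By Lemma~\ref{LemmaPropertiesAnalyticFunction}ii.)--iv.), each component is analytic, hence $\mathcal{M}: U \to W = L^2([0,1])^3\times\R^2$ is analytic. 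The main obstacle is the careful bookkeeping of function-space targets and norms so that every product really is a bounded bilinear map into the claimed space, and the (standard but slightly technical) fact that composing a real-analytic finite-dimensional map with a $C([0,1])$-valued function with relatively compact range yields an analytic Nemytskii operator; everything else is a direct application of the closure properties already recorded.
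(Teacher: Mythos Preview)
Your overall strategy matches the paper's: decompose $\mathcal{M}$ into elementary geometric building blocks ($\tau_{h_0}$, $\nu_{h_0}$, $J_{h_0}$, $\kappa_{h_0}$, and the anisotropy factors), verify analyticity of each as a map in $h_0$, and assemble via Lemma~\ref{LemmaPropertiesAnalyticFunction}. However, there is a function-space mismatch that needs to be corrected. The domain is $V \subset H^2([0,1])^3$ (see \eqref{EquationLSISettingV}), not a subset of $C^{2+\alpha}$; hence $h_0 \mapsto (\gamma_{h_0})_x$ lands in $H^1$ (not $C^{1+\alpha}$) and $h_0 \mapsto (\gamma_{h_0})_{xx}$ lands only in $L^2$ (not $C^{\alpha}$ or $C$). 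Consequently $\kappa_{h_0}$ is only an $L^2$-valued analytic map, and your final assembly via the bounded bilinear multiplication $C([0,1]) \times C([0,1]) \to C([0,1])$ is not available for any product containing $\kappa_{h_0}$.

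The paper fixes this in the natural way: the first-order quantities $\tau_{h_0}$, $\nu_{h_0}$, $D^2\varphi^{\circ}(\nu_{h_0})$, $J_{h_0}$ are shown to be analytic into $H^1$, which is a Banach algebra, so all products among them stay in $H^1$; $\kappa_{h_0}$ is analytic into $L^2$ via the explicit formula \eqref{EquationFormulaKappaH0}; and the final products in the $u^i$-components are handled through the bounded bilinear map $H^1([0,1]) \times L^2([0,1]) \to L^2([0,1])$. Once you replace your H\"older targets by these Sobolev targets, your argument goes through verbatim and coincides with the paper's. Your aside about the real-analyticity of $\varphi^{\circ}$ on $\R^2\setminus\{0\}$ is a fair observation; the paper simply uses it without further comment.
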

\begin{proof}
	Similar calculations for all geometric quantities were done in \cite[Lemma 3.4]{AcquaPozziSpenerLSIOpenElasticCurve} for a curve without additional tangential part and in \cite[Lemma 8.1]{GarckeGoessweinNonlinearStabilityDoubleBubbleSDF} for a higher dimensional triple junction geometry with the same tangential part. For the reader's convenience we will sum up the most important ideas.\\
	Studying the formula for \eqref{EquationGradientOfE} wee see that it consists of sums and products of elemental geometric quantities. So if we prove their analyticity as functions in $h_0$, we can use Lemma \ref{LemmaPropertiesAnalyticFunction}ii.) and iii.) together with the fact that $H^2([0,1])$ and $H^1([0,1])$ are Banach algebras (see e.g. \cite[Cor. 8.10]{BrezisFunctionalAnalysis}) to show that the whole composition is analytic. We have to make one exception for the factor $\kappa_{h_0}^i$ as it takes values in $L^2$. But we will take care of this later.\\  
	We begin with the tangential vector $\tau^i_{h_0}$, for which we have the formula
	
	\begin{align*}
		\tau^i_{h_0}=\frac{\gamma_{h_{0}}'}{|\gamma_{h_{0}}'|}=\frac{(\gamma^i_{\ast})'+(h^i_0)'\nu_{\ast}^i+(\mu^i(h_0))'\tau_{\ast}^i}{|(\gamma^i_{\ast})'+(h^i_0)'\nu_{\ast}^i+(\mu^i(h_0))'\tau_{\ast}^i|}.
	\end{align*}
Let us first study the numerator.
Note that due to Lemma \ref{LemmaPropertiesAnalyticFunction}v.) and \eqref{EquationCOnstructionTangentialPart}, the tangential part $\mu$ forms an analytic function $V\to H^2([0,1])^3, h_0\mapsto \mu(h_0)$. Thus, the numerator is an analytic function
 \begin{align*} 
 	V\to H^1([0,1]; \R^2)^3, h_0\to \gamma_{\ast}'+h_0'\nu_{\ast}+\mu(h_0)'\tau_{\ast}
 \end{align*}
 due to Lemma \ref{LemmaPropertiesAnalyticFunction}v.). Consequently, the denominator is also an analytic function $V\to H^1([0,1]; \R^2)^3$ due to Lemma \ref{LemmaPropertiesAnalyticFunction}iv.). Using again Lemma \ref{LemmaPropertiesAnalyticFunction}iv.) and then Lemma \ref{LemmaPropertiesAnalyticFunction}iii.) we get that $h_0\to \tau_{h_0}$ is an analytic function $U\to H^1([0,1]; \R^2)^3$ for a neighborhood $0\in U\subset V$ small enough such that $\tau_{h_0}$ is well-defined. Note that we used the fact that $x^{-1}$ is analytic on $\R\backslash\{0\}$. The same holds for $h_0\mapsto\nu_{h_0}$ using Lemma \ref{LemmaPropertiesAnalyticFunction}iv.) and v.). Furthermore, $U\to H^1([0,1],\R^{2\times 2})^3, h_0\to D^2\varphi^{\circ}(\nu_{h_0})$ is analytic due to the prior results and Lemma \ref{LemmaPropertiesAnalyticFunction} iv.) using that $D^2\varphi^{\circ}$ is analytic. For the metric tensor we see that $J_{h_0}$ is analytic as map $U\to H^1([0,1])^3$ due to Lemma \ref{LemmaPropertiesAnalyticFunction} iv.).
 For the curvature a direct calculation shows that
 \begin{align}\label{EquationFormulaKappaH0}
 	\kappa_{h_0}=\frac{\det(\gamma_{h_0}',\gamma_{h_0}'')}{|\gamma_{h_0}'|^3}=\frac{(|\gamma_{\ast}'|+\mu(h_0)')h_0''-h_0'(\langle\gamma_{\ast}'',\tau_{\ast}\rangle+\mu(h_0)'')}{\sqrt{(h_0')^2+(|\gamma_{\ast}'|+\mu(h_0)')^2}^3}.
 \end{align}

Here we have to be a bit careful as both $h_0''$ and $\mu(h_0)''$ take values in $L^2$, which is not a Banach algebra. But as we only take products between $L^2$ and $H^1$ functions, we can use the fact that the pointwise multiplication forms a bounded, bilinear operator 
	\begin{align}\label{EquationProductBetweenL2AndH1} 
		H^1([0,1])\times L^2([0,1])\to L^2([0,1]), \, (f,g)\mapsto fg.
	\end{align} Using again Lemma \ref{LemmaPropertiesAnalyticFunction}iii.) we can argue similarly as before to see that $\kappa_{h_0}: U \to L^2([0,1])^3$ is analytic. To finish the argumentation for the $u$-terms in the formula for $\mathcal{M}(h_0)$ in \eqref{EquationGradientOfE} we again use that \eqref{EquationProductBetweenL2AndH1} is a bounded, bilinear operator and argue similarly as for $\kappa_{h_0}$.\\
	For the $a^1$ and $a^2$ the argumentation is even easier as these are only real valued and therefore all appearing products are easier to handle. In total this finishes the proof.
\end{proof}
\begin{lemma}[Fredholm property of $\mathcal{M}'(0)$]\label{LemmaFredholmPropEPrimePrime}\ \\
	Let $V,W, V\subset W\subset V'$ be as in \eqref{EquationLSISettingV}-\eqref{EquationLSISettingWToVPrime} and $\mathcal{M}'(0)$ as in Lemma \ref{LemmaSecondVariE}.The function $\mathcal{M}'(0): V\to W$ is a Fredholm operator of index $0$.
\end{lemma}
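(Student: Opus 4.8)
The map $\mathcal{M}'(0)\colon V\to W$ is an affine (indeed linear) operator whose structure we can read off from Lemma~\ref{LemmaSecondVariE}: its $L^2$-part is, componentwise, a second-order elliptic operator of the form $h_0^i\mapsto -a^i(x)\bigl((h_0^i)''+b^i(x)(h_0^i)'\bigr)$ with coefficients $a^i(x)=(D^2\varphi^{\circ}(\nu_{\ast}^i)\tau_{\ast}^i\cdot\tau_{\ast}^i)|(\gamma_{\ast}^i)'|^{-2}J_{\ast}^i$ which is strictly positive by Lemma~\ref{lemma2.1}i.), while the $\R^2$-part records boundary data that are linear combinations of the traces $(h_0^i)'(1)$. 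The plan is to split $\mathcal{M}'(0)=A+K$ where $A$ is the ``principal'' part — the diagonal second-order elliptic operator together with its natural boundary conditions — and $K$ collects the lower-order and boundary remainder terms, then show that $A$ is an isomorphism $V\to W$ (hence Fredholm of index $0$) and that $K\colon V\to W$ is compact; the conclusion follows since the class of Fredholm operators of index $0$ is stable under compact perturbations.

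\textbf{Key steps.} First I would make precise the decomposition: write $A(h_0)=\bigl((-a^i(x)(h_0^i)'')_{i=1,2,3},\, \hat a^1,\hat a^2\bigr)$ where $\hat a^1,\hat a^2$ retain only the top-order trace contributions $(h_0^i)'(1)D^i$ (suitably combined via the $\tilde\alpha_{\ast}$-weights as in Lemma~\ref{LemmaSecondVariE}), and put everything else into $K$. Second, I would prove that $A$ is bijective: given $(f,g^1,g^2)\in W=L^2([0,1])^3\times\R^2$, solving $A(h_0)=(f,g^1,g^2)$ amounts to solving, for each $i$, the ODE $-a^i(x)(h_0^i)''=f^i$ on $[0,1]$ subject to $h_0^i(0)=0$, the linear constraint $\sum_i\tilde\alpha_{\ast}^i h_0^i(1)=0$ encoded in $V$, and the two trace conditions coming from $g^1,g^2$; since the relevant boundary bilinear form (the map sending the Neumann data $((h_0^i)'(1))_{i}$ restricted to the constraint hyperplane to $(\hat a^1,\hat a^2)$) is invertible — this is exactly the non-degeneracy computation already carried out in Remark~\ref{RemarkOnMatrixI} showing $\det\mathcal{I}$-type degeneracies do not obstruct the reduced system — one gets existence and uniqueness of $h_0\in V$, and elliptic regularity for a second-order ODE with $H^1$-coefficients and $L^2$ right-hand side gives $h_0\in H^2$, with a bound $\|h_0\|_{H^2}\lesssim \|(f,g^1,g^2)\|_W$; thus $A$ is a Banach-space isomorphism, so $\mathrm{ind}\,A=0$. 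Third, I would check compactness of $K$: every term in $K$ is either a zeroth/first-order multiplication operator $H^2([0,1])^3\to L^2([0,1])^3$, which is compact because the embedding $H^2([0,1])\hookrightarrow H^1([0,1])$ is compact (Rellich) and multiplication by an $H^1$ (hence continuous) function is bounded $H^1\to L^2$, or a boundary trace term $h_0\mapsto (h_0^i)'(1)\in\R$ composed with nothing of top order, which factors through the compact embedding $H^2\hookrightarrow C^1$ and is therefore compact into the finite-dimensional space $\R^2$.

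\textbf{Main obstacle.} The only genuinely delicate point is the invertibility of $A$, and within that, the solvability of the coupled boundary system: one must verify that the three constraints (the fixed-endpoint conditions $h_0^i(0)=0$, the triple-junction constraint $\sum_i\tilde\alpha_{\ast}^i h_0^i(1)=0$ built into $V$, and the two Neumann-type conditions prescribing $\hat a^1,\hat a^2$) together determine $h_0$ uniquely and consistently — i.e.\ that the associated $3\times3$ (or, after eliminating the constraint, $2\times2$) boundary matrix involving the $D^i=D^2\varphi^{\circ}(\nu_{\ast}^i)\tau_{\ast}^i\cdot\tau_{\ast}^i$ and the weights $\tilde\alpha_{\ast}^i/\tilde\alpha_{\ast}^3$ is nonsingular. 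This uses the strict positivity of the $D^i$ from Lemma~\ref{lemma2.1}i.) together with the angle restriction \eqref{EquationAngleRestriction} (via $s^i=\sin\theta^i_{\ast}\neq0$), exactly in the spirit of the determinant computations in Remark~\ref{RemarkOnMatrixI}; everything else — elliptic estimates for a one-dimensional second-order operator, Banach-algebra and Rellich arguments for $K$ — is routine.
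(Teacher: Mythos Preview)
Your proposal is correct and follows essentially the same route as the paper: strip off the first-order term in the $u^i$ as a compact perturbation (via $H^2\hookrightarrow H^1\hookrightarrow L^2$), show the remaining principal part is a bijection $V\to W$, and conclude by stability of the Fredholm index under compact perturbations. The only point to sharpen is the boundary computation: the reference to Remark~\ref{RemarkOnMatrixI} and to $s^i\neq 0$ is a red herring here --- the paper simply writes the $2\times 2$ boundary matrix $F$ (obtained after eliminating $\psi^3$ via $\sum_i\tilde\alpha_{\ast}^i\psi^i=0$) and computes $\det(F)=D^1D^2+D^1D^3(\tilde\alpha_{\ast}^2/\tilde\alpha_{\ast}^3)^2+D^2D^3(\tilde\alpha_{\ast}^1/\tilde\alpha_{\ast}^3)^2>0$, which needs only $D^i>0$ from Lemma~\ref{lemma2.1}i.) and $\tilde\alpha_{\ast}^i>0$ from Assumption~\ref{RemarkAssumstionsGammaast}.
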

\begin{proof}
	Note that compact perturbations of a Fredholm operator preserve both the Fredholm property and the index, c.f. for example \cite[Proposition 8.14]{ZeidlerBook1}. Furthermore, the $(h_0^i)'$-terms in the $u^i$-terms in \eqref{EquationGradientDerivative} are compact perturbations as they take values in $H^1$ and we have the compact embedding $H^1\subset L^2$. Therefore, we can ignore these terms. Thus, for the $u$-terms in \eqref{EquationGradientDerivative} it remains to study
	\begin{align*}
		-J^i_{\ast}\frac{(D^2\varphi^{\circ}(\nu_{\ast}^i))\tau_{\ast}^i\cdot\tau_{\ast}^i)}{|(\gamma_{\ast}^i)'|^2}h_0''.
	\end{align*} 
	Observe that the coefficient before $h_0''$ is bounded above by a negative constant due to Lemma \ref{lemma2.1}i.). Therefore this forms a surjective mapping $H^2([0,1])^3\to L^2([0,1])^3$, whose kernel are affine functions. The boundary conditions of $V$, namely $h_0^i(0)=0, i=1,2,3,$ and $\sum_{i=1}^3\tilde{\alpha}_{\ast}^ih_0^i(1)=0$, only allow affine functions, which are elements of
	\begin{align}\label{EquationVaff}
	V_{aff}:=\left\{h_0: [0,1]\to\R^3, x\mapsto (\psi^1x, \psi^2x, \psi^3x)\big| \psi^1,\psi^2,\psi^3\in\R\wedge\sum_{i=1}^3\tilde{\alpha}_{\ast}^i\psi^i=0\right\}.  
	\end{align}
    
	Obviously $V_{aff}$ is a two dimensional subspace of $V$  and
	\begin{equation}\label{deppsi3}
	\psi^{3}= -\frac{\tilde{\alpha}_{\ast}^{1}}{\tilde{\alpha}_{\ast}^{3}}\psi^{1} -\frac{\tilde{\alpha}_{\ast}^{2}}{\tilde{\alpha}_{\ast}^{3}}\psi^{2}.
	\end{equation}
	 Our goal is now to see that the mapping 
	\begin{align}\label{EquationBoundaryOperatorOnAAff}
	V_{aff}\to \R^2, h_0\to (a^1(h_0),a^2(h_0)),
    \end{align} 
    where $a^1,a^2$ are as in Lemma \ref{LemmaSecondVariE}, is bijective. This then shows that the main part of $\mathcal{M}'(0)$ is bijective and consequently a Fredholm operator of index $0$. The perturbation argument mentioned in the beginning yields then the claim.
    
	In order to study \eqref{EquationBoundaryOperatorOnAAff} we have to write the right-hand side as function solely in $\psi^1$ and $\psi^2$ using the sum condition for $\psi^i$, where the $\psi^i$ are as in $\eqref{EquationVaff}$. This yields - using \eqref{deppsi3} as well as again the abbreviation \eqref{EquAbbrevD2} - that
		\begin{align*}
			a^1&=\psi^1D^1+\frac{(\tilde{\alpha}_{\ast}^1)^2}{(\tilde{\alpha}_{\ast}^3)^2}\psi^1D^3+\frac{\tilde{\alpha}_{\ast}^1\tilde{\alpha}_{\ast}^2}{(\tilde{\alpha}_{\ast}^3)^2}\psi^2D^3,\\
			a^2&=\psi^2D^2+\frac{\tilde{\alpha}_{\ast}^1\tilde{\alpha}_{\ast}^2}{(\tilde{\alpha}_{\ast}^3)^2}\psi^1D^3+\frac{(\tilde{\alpha}_{\ast}^2)^2}{(\tilde{\alpha}_{\ast}^3)^2}\psi^2D^3.
		\end{align*}
	
		This can be written as
		\begin{align*}
			\begin{pmatrix} a^1\\a^2\end{pmatrix}=F\begin{pmatrix}\psi^1\\ \psi^2	\end{pmatrix}, \quad F=\begin{pmatrix}
				D^1+\frac{(\tilde{\alpha}_{\ast}^1)^2}{(\tilde{\alpha}_{\ast}^3)^2}D^3 & \frac{\tilde{\alpha}_{\ast}^1\tilde{\alpha}_{\ast}^2}{(\tilde{\alpha}_{\ast}^3)^2}D^3 \\
				\frac{\tilde{\alpha}_{\ast}^1\tilde{\alpha}_{\ast}^2}{(\tilde{\alpha}_{\ast}^3)^2}D^3 & D^2+\frac{(\tilde{\alpha}_{\ast}^2)^2}{(\tilde{\alpha}_{\ast}^3)^2}D^3
			\end{pmatrix}.
		\end{align*}
	
		Now we observe that 
		
		\begin{align*}
			\det(F)=D^1D^2+D^1D^3\frac{(\tilde{\alpha}_{\ast}^2)^2}{(\tilde{\alpha}_{\ast}^3)^2}+D^2D^3\frac{(\tilde{\alpha}_{\ast}^1)^2}{(\tilde{\alpha}_{\ast}^3)^2}.
		\end{align*}
	
		As the $D^i$ are positive due to Lemma \ref{lemma2.1}i.), we conclude that $\det(F)>0$. Thus, $F$ is invertible and so \eqref{EquationBoundaryOperatorOnAAff} is bijective. In total, this shows that the main part of $\mathcal{M}'(0)$ is a bijective mapping $V\to W$. This finishes the proof.
\end{proof}
With these two properties we can prove the first version of a LSI for the anisotropic length energy.
\begin{teo}[\L ojasiewicz-Simon gradient inequality for $E$ - Analytic Version]\label{TheoremLSIanalytic}\ \\
	Let $V,W, V\subset W\subset V'$ be as in \eqref{EquationLSISettingV}-\eqref{EquationLSISettingWToVPrime} and $\mathcal{M}$ as in Lemma \ref{LemmaFirstVariE}.There exist $\sigma_{LSI},C_{LSI}>0$ and $\theta\in(0,\frac{1}{2}]$ such that for all $h\in V$ with $\|h\|_V\le\sigma_{LSI}$ it holds that
	\begin{align}\label{EquationLSIanalytic}
		|E(h)-E(0)|^{1-\theta}\le C_{LSI}\|\mathcal{M}(h)\|_W.
	\end{align}
	Hereby, $\mathcal{M}(h)\in W$ is given by \eqref{EquationGradientOfE}. 
\end{teo}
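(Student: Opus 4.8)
The plan is to obtain \eqref{EquationLSIanalytic} as a direct application of the abstract \L ojasiewicz--Simon theorem of Feehan--Maridakis, stated here as Theorem~\ref{TheoremFeehanMaridakisTheo2}, with the Gelfand-type triple $V\subset W\subset V'$ from \eqref{EquationLSISettingV}--\eqref{EquationLSISettingWToVPrime}, the energy $E\colon U\to\R$, its gradient map $\mathcal{M}$ from \eqref{EquationGradientOfE} (a gradient map in the sense of Definition~\ref{DefinitionGradientMap}), and the point $h=0$. Three hypotheses must be verified: $(a)$ that $0$ is a critical point, i.e.\ $\mathcal{M}(0)=0$; $(b)$ that $\mathcal{M}\colon U\to W$ is analytic on a neighbourhood of $0$; and $(c)$ that $\mathcal{M}'(0)\colon V\to W$ is a Fredholm operator of index $0$. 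Points $(b)$ and $(c)$ are precisely the contents of Lemma~\ref{LemmaAnalyticityEPrime} and Lemma~\ref{LemmaFredholmPropEPrimePrime}, and the analyticity of $\mathcal{M}$ in particular makes $E$ itself analytic near $0$, so that the energy gap $|E(h)-E(0)|$ on the left-hand side of \eqref{EquationLSIanalytic} is the analytic functional required by the abstract statement.

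For $(a)$ I would read off $\mathcal{M}(0)$ from Lemma~\ref{LemmaFirstVariE} evaluated at $h_0=0$. Since $\Gamma_\ast$ consists of straight segments we have $\kappa_\ast^i\equiv 0$, so all interior components $u^i$ vanish; for the boundary components $a^1,a^2$ it is cleanest to go back to \eqref{EquFirstVariationE}. For any test direction $h_1\in V$, the condition $\sum_{i=1}^3\tilde{\alpha}_\ast^i h_1^i(1)=0$ together with Lemma~\ref{LemmaRelationNormalTangentialPart} and Definition~\ref{defmufinal} guarantees that the three vectors $\zeta^i(1):=h_1^i(1)\nu_\ast^i(1)+\mu^i(h_1)(1)\tau_\ast^i(1)$ coincide, say equal to $\zeta(1)$; hence the boundary term of \eqref{EquFirstVariationE} equals $-\big(\sum_{i=1}^3 D\varphi^\circ(\nu_\ast^i(1))\big)\cdot(\zeta(1))^\bot$, which is zero by the angle condition $\sum_{i=1}^3 D\varphi^\circ(\nu_\ast^i(1))=0$ imposed in Assumption~\ref{RemarkAssumstionsGammaast}. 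Thus $E'(0)h_1=0$ for every $h_1\in V$, that is $\mathcal{M}(0)=0$.

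With $(a)$, $(b)$, $(c)$ established, Theorem~\ref{TheoremFeehanMaridakisTheo2} provides $\sigma_{LSI},C_{LSI}>0$ and $\theta\in(0,\tfrac12]$ such that $|E(h)-E(0)|^{1-\theta}\le C_{LSI}\|\mathcal{M}(h)\|_W$ for all $h\in V$ with $\|h\|_V\le\sigma_{LSI}$, which is exactly \eqref{EquationLSIanalytic}.

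Since the genuinely technical work --- the power-series estimates behind analyticity, and the Fredholm analysis reducing to invertibility of the $2\times 2$ matrix $F$ --- has already been carried out in Lemmas~\ref{LemmaAnalyticityEPrime} and \ref{LemmaFredholmPropEPrimePrime}, the only real care needed in this proof is bookkeeping: matching the embeddings \eqref{EquationLSISettingVToW}--\eqref{EquationLSISettingWToVPrime} and the gradient-map convention to the exact hypotheses of Theorem~\ref{TheoremFeehanMaridakisTheo2} --- in particular that the pivot Hilbert space is $W$ and that the Fredholm property is required for $\mathcal{M}'(0)$ as a map $V\to W$ --- and confirming that $h=0$ is a critical point in the weak sense demanded there, which is exactly step $(a)$.
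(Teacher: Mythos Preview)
Your proposal is correct and follows the same route as the paper: verify the hypotheses of Theorem~\ref{TheoremFeehanMaridakisTheo2} via Lemmas~\ref{LemmaAnalyticityEPrime} and~\ref{LemmaFredholmPropEPrimePrime} and conclude. Two small points of comparison. First, you omit one hypothesis check that the paper does carry out: Theorem~\ref{TheoremFeehanMaridakisTheo2} requires the embedding $V\subset V'$ induced by \eqref{EquationLSISettingVToW}--\eqref{EquationLSISettingWToVPrime} to be \emph{definite}; the paper notes that the induced pairing on $V\times V$ is simply the $L^2$ inner product $\sum_i\int_0^1 f^i g^i\,dx$, which is obviously definite. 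You gesture at this under ``bookkeeping'' but should state it. Second, the paper does not infer analyticity of $E$ from analyticity of $\mathcal{M}$ as you do; it argues directly that $h_0\mapsto \varphi^\circ(\nu_{h_0})$ and $h_0\mapsto J_{h_0}$ are analytic into $H^1$ (from the computations inside Lemma~\ref{LemmaAnalyticityEPrime}) and composes with integration. Your shortcut is fine too, since composing the analytic $\mathcal{M}\colon U\to W$ with the bounded linear embedding $W\hookrightarrow V'$ gives $E'$ analytic, hence $E\in C^2$. Your explicit verification that $\mathcal{M}(0)=0$ is a nice addition; the paper leaves this implicit in Assumption~\ref{RemarkAssumstionsGammaast}.
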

\begin{proof}
	We first check that $V\subset V'$ with respect to \eqref{EquationLSISettingVToW}, \eqref{EquationLSISettingWToVPrime} is a definite embedding, cf. Appendix \ref{AppendixUsedResults}. For this we observe that the canonical pairing yields the bilinear form
	\begin{align*}
		F: V\times V\to\R, (f,g)\mapsto \sum_{i=1}^3\int_0^1f^ig^idx.
    \end{align*} 
    Note that the additional components in \eqref{EquationLSISettingWToVPrime} vanish due to the embedding \eqref{EquationLSISettingVToW}. In other words, $F$ is the $L^2$ product on $V$, which is clearly definite. Thus, $V\subset V'$ is definite.
    
    Now, Lemma \ref{LemmaPropertiesAnalyticFunction}iv.) and the considerations in Lemma \ref{LemmaAnalyticityEPrime} show that there is neighborhood $U\subset V$ such that (considered as functions in the variable $h_0$) $\nu_{h_0}: U\to (H^1([0,1], \R^2))^3, J_{h_0}: U\to (H^1([0,1]))^3$ and $\varphi^{\circ}(\nu_{h_0}): U\to (H^1([0,1]))^3$ are analytic. Together with the analyticity of $\int: H^1([0,1])^3\to\R$ and again Lemma \ref{LemmaPropertiesAnalyticFunction}iv.) this shows that $E: U\to\R$ is analytic and thus in $C^2(U, \R)$, cf. \cite[p. 362]{ZeidlerBook1}. 
    
    With these two facts the claim follows directly from Theorem \ref{TheoremFeehanMaridakisTheo2} using Lemma~\ref{LemmaAnalyticityEPrime} and Lemma~\ref{LemmaFredholmPropEPrimePrime}.
\end{proof} 
\section{Analytic properties of the height function}\label{SectionAnalysisH}
	Before we can finally carry out the stability analysis in the next section, we have to verify three properties for the height function $h$ we used so far. These properties  will  play a crucial role also in the stability analysis later on. Firstly, we will derive time regularity, when $h$ parametrizes a geometric solution of \eqref{EquationGeometricProblem} over a given reference frame (cf. Assumption~\ref{RemarkAssumstionsGammaast}). Lemma \ref{LemmaExistenceRefFramGraphPara} alone is not sufficient to guarantee this. Thus, we will prove a short time existence result for $h$ in Lemma~\ref{LemmaSTEForH}. Secondly, we need a bound for the higher norms of such a solution, i.e., a parabolic smoothing property. This is proven in Lemma \ref{LemmaParabolicSmoothigH}. Finally, we need continuous dependency on the initial data of the norm of the solution from Lemma \ref{LemmaSTEForH}. This is carried out in Lemma \ref{LemmaContinuousDependency}.
	For our convenience and for consistency with the short time existence analysis carried out already in \cite{KroenerNovagaPozziAnisoCurvFloImmeredNetworks} we work in the following with H\"older spaces.

\bigskip

	To begin with, we have to translate \eqref{EquationGeometricProblem} into the resulting system for $h$. 
	We recall that due to the work in Section \ref{SubsectionParametrization} we have that
	\begin{align*}
		\Gamma^i_h(t)=\gamma^i_h(t,[0,1]),\qquad i=1,2,3,
	\end{align*}
    with
    \begin{align*}
    	\gamma^i_h(t,x)=\gamma^i_{\ast}(x)+h^i(t,x)\nu^i_{\ast}+\mu^i(t,x)\tau^i_{\ast},
    \end{align*}
    whereby
    \begin{align*}
    	(\mu^i(t,x))_{i=1,2,3}=\mathcal{I}(h^i(t,x))_{i=1,2,3},
    \end{align*}
    with $\mathcal{I}$ as in \eqref{EqRelationNormalTangentialPart}. Using Lemma \ref{LemmaRelationNormalTangentialPart} we deduce from \eqref{EquationGeometricProblem} the following problem for $h$, whereby as usual an index $h$ denotes a geometric quantity of $\Gamma_h$.
	\begin{align}\label{EquationSystemForH}
		\begin{cases}
			V^i_h=\varphi^{\circ}(\nu^i_h)(D^2\varphi^{\circ}(\nu^i_h)\tau^i_h\cdot\tau^i_h)\kappa^i_h & \text{on }(0,T)\times(0,1), i=1,2,3,\\
			h^i(t,0)=0 & \forall t\in[0,T], i=1,2,3,\\
			\sum_{i=1}^3\tilde{\alpha}^i_{\ast}h^i(t,1)=0 &\forall t\in[0,T],\\
			\sum_{i=1}^3D\varphi^{\circ}(\nu^i_h(t,1))=0 &\forall t\in[0,T],\\
			h(0,x)=h_0(x) & \forall x\in [0,1], i=1,2,3.
		\end{cases}
	\end{align}
	Although this system seems to have no good structure for an equation in $h$, it can actually be written as a parabolic PDE system for $h$. To see this, observe that
	\begin{align}\label{EquationNormalVelocityInH}
		V^i_h=\partial_th^i(\nu_{\ast}^i\cdot\nu_h^i)+\partial_t\mu^i(h)(\tau_{\ast}^i\cdot\nu_h^i).
	\end{align}
	Now, $\partial_t\mu(h)$ can be written in terms of $\partial_th$ using \eqref{EquationCOnstructionTangentialPart}, yielding
	\begin{align*}
		(V_h^i)_{i=1,2,3}=F_h\partial_t h,\quad F_h=\mathop{diag}(\nu^1_h\cdot\nu^1_{\ast}, \nu_h^2\cdot\nu_{\ast}^2,\nu_h^3\cdot\nu_{\ast}^{3})+\mathop{diag}(\nu_{h}^1\cdot\tau_{\ast}^1, \nu_h^2\cdot\tau_{\ast}^2, \nu_h^3\cdot\tau_{\ast}^3)\mathcal{I}.
	\end{align*}
    Thus, $\eqref{EquationSystemForH}_1$ can be written as 
    \begin{align}\label{EquationParabolicVersionOfPDE}
    	F_h\partial_th=(\varphi^{\circ}(\nu^i_h)(D^2\varphi^{\circ}(\nu^i_h)\tau^i_h\cdot\tau^i_h)\kappa^i_h)_{i=1,2,3}
    \end{align}
	As $F_h=\Id$ for $h\equiv 0$, we have that $F_h$ is invertible as long as $\|h\|_{C^1}$ is small enough. With this we can rewrite $\eqref{EquationSystemForH}_1$ to an equation for $\partial_th$. Using \eqref{EquationFormulaKappaH0} and again \eqref{EqRelationNormalTangentialPart} to express the space derivatives of $\mu$ in terms of space derivatives of $h$, this shows that we indeed have a parabolic problem for $h$.\\
	For this system we can readily prove the following short time existence result.
	\begin{lemma}[Short time existence for \eqref{EquationSystemForH}]\label{LemmaSTEForH}\ \\
		Let $\Gamma_{\ast}^i, \gamma_{\ast}^i, \tilde{\alpha}_{\ast}^i, i=1,2,3$ fulfil Assumption \ref{RemarkAssumstionsGammaast} and $\alpha\in(0,1)$. Then there exists positive constants $\varepsilon_{STE}(\alpha),T_{STE}(\alpha)$ and $R_{STE}(\alpha)$ such that the following holds: for any initial data $h_0\in (C^{2+\alpha}([0,1]))^3$ fulfilling the compatibility conditions 
		\begin{align}\label{EquationAnalyticCompatibilityCondition}
			\begin{cases}
				h_0\text{ fulfills }\eqref{EquationSystemForH}_2-\eqref{EquationSystemForH}_4, \\
				\kappa_{h_0}^i=0 & \text{in }x=0, i=1,2,3,\\
				\sum_{i=1}^3\tilde{\alpha}^i_{\ast}F_{h_0}^{-1}(\varphi^{\circ}(\nu_{h_0}^i)(D^2\varphi^{\circ}(\nu_{h_0}^i)\tau_{h_0}^i\cdot\tau_{h_0}^i)\kappa_{h_0}^i)=0 & \text{in }x=1, 
			\end{cases}	
		\end{align} 
		and 
		$\|h_0\|_{(C^{2+\alpha}([0,1]))^3}\le\varepsilon_{STE}(\alpha)$ there exists $h\in (C^{\frac{2+\alpha}{2},2+\alpha}([0,T_{STE}(\alpha)]\times[0,1]))^3$ with 
		$$\|h\|_{(C^{\frac{2+\alpha}{2},2+\alpha}([0,T_{STE}(\alpha)]\times[0,1]))^3}\le R_{STE}(\alpha),$$
		 that solves \eqref{EquationSystemForH}. 
	\end{lemma}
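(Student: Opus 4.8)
The plan is to reduce the quasilinear parabolic system \eqref{EquationSystemForH} for $h$ to a setting where a standard linear parabolic existence theorem for H\"older spaces (with the appropriate boundary conditions) applies, and then to close a fixed point argument. First I would exploit the discussion immediately preceding the statement: using \eqref{EquationNormalVelocityInH} and the fact that $\partial_t\mu(h)=\mathcal{I}\partial_t h$, the normal velocity equation $\eqref{EquationSystemForH}_1$ becomes $F_h\partial_t h = (\varphi^{\circ}(\nu_h^i)(D^2\varphi^{\circ}(\nu_h^i)\tau_h^i\cdot\tau_h^i)\kappa_h^i)_{i=1,2,3}$ as in \eqref{EquationParabolicVersionOfPDE}. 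Since $F_0=\Id$ and $h\mapsto F_h$ depends continuously (indeed smoothly, even analytically, by the arguments in Lemma \ref{LemmaAnalyticityEPrime}) on $h$ in the $C^1$-norm, the matrix $F_h$ is invertible with uniformly bounded inverse as long as $\|h\|_{C^1}$ stays below a threshold; multiplying through by $F_h^{-1}$ and inserting the formula \eqref{EquationFormulaKappaH0} for $\kappa_h$ (with $\mu$-derivatives rewritten via $\mathcal{I}$ in terms of $h$-derivatives) yields a genuinely quasilinear system $\partial_t h^i = a^i(x,h,h_x)h^i_{xx} + b^i(x,h,h_x)$, where the leading coefficient is, up to the invertible factor $F_h^{-1}$, essentially $\varphi^{\circ}(\nu_h^i)(D^2\varphi^{\circ}(\nu_h^i)\tau_h^i\cdot\tau_h^i)/|\gamma^i_{h,x}|^2$, which by Lemma \ref{lemma2.1}i.) is bounded below by a positive constant near $h=0$. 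Hence the system is uniformly parabolic in a neighborhood of $h\equiv 0$.

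Next I would treat the boundary conditions. The Dirichlet-type condition $\eqref{EquationSystemForH}_2$ ($h^i(t,0)=0$) and the linear condition $\eqref{EquationSystemForH}_3$ ($\sum_i\tilde\alpha_\ast^i h^i(t,1)=0$) are of zeroth order, while $\eqref{EquationSystemForH}_4$ ($\sum_i D\varphi^{\circ}(\nu_h^i(t,1))=0$) is, after differentiating $\nu_h$, a first-order (in $x$) condition on $h$ at $x=1$; linearizing all of these at $h\equiv 0$ one must check that the resulting linear boundary operators satisfy the Lopatinskii--Shapiro (complementing) condition for the linearized parabolic operator. This is precisely the algebraic content that was already handled in the short time existence analysis of \cite{KroenerNovagaPozziAnisoCurvFloImmeredNetworks} for the special flow; the only difference here is that the geometric PDE for the curves $u^i$ has been rewritten as a PDE for the scalar height functions $h^i$ over the fixed reference network $\Gamma_\ast$, a change of variables that is invertible with bounded derivatives near $h=0$ by Lemma \ref{LemmaExistenceRefFramGraphPara}. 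So I would argue that well-posedness of the linearization transfers from the $u$-formulation to the $h$-formulation, or re-verify the complementing condition directly using that $\nu_\ast^i,\tau_\ast^i$ form a basis and that \eqref{EquationAngleRestriction}, \eqref{EquationDefinitionOfAlphaTildeAst} hold, exactly as in Lemma \ref{LemmaFredholmPropEPrimePrime} where the matrix $F$ built from the boundary terms was shown to be invertible.

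With uniform parabolicity and the complementing condition in hand, I would invoke the standard linear theory for parabolic systems in parabolic H\"older spaces $C^{\frac{2+\alpha}{2},2+\alpha}$ (Solonnikov-type estimates), which gives, for the linearized problem at $h\equiv 0$ with data in $C^{2+\alpha}([0,1])^3$ satisfying the compatibility conditions \eqref{EquationAnalyticCompatibilityCondition}, a unique solution on $[0,T]\times[0,1]$ with an a priori estimate in terms of $\|h_0\|_{C^{2+\alpha}}$ and $T$. The compatibility conditions \eqref{EquationAnalyticCompatibilityCondition} are exactly the first- and second-order compatibility conditions obtained by evaluating the boundary conditions and their time derivatives at $t=0$ (this is why $\kappa_{h_0}^i=0$ at $x=0$ and the combination $\sum_i\tilde\alpha_\ast^i F_{h_0}^{-1}(\cdots)=0$ at $x=1$ appear, the latter being $\partial_t(\sum_i\tilde\alpha_\ast^i h^i)=0$ at $t=0$). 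Then I would set up a fixed point / contraction argument (or use the standard theory for quasilinear parabolic systems directly, e.g.\ the approach via the implicit function theorem on the linearized operator): define a map sending $\bar h$ in a small ball $B_{R_{STE}}$ of $C^{\frac{2+\alpha}{2},2+\alpha}([0,T_{STE}]\times[0,1])^3$ with $\bar h(0,\cdot)=h_0$ to the solution $h$ of the linear problem with coefficients frozen at $\bar h$; for $T_{STE}$ and $\varepsilon_{STE}$ small enough the a priori estimate keeps $h$ in the ball and the map becomes a contraction (the nonlinear terms gain a small factor either from the short time interval or from the smallness of $\|h_0\|_{C^{2+\alpha}}$, using that all coefficient functions are Lipschitz in $\bar h$ on the ball by the analyticity results of Section \ref{SectionVariations}--\ref{SectionLoja}), and its fixed point is the desired solution with $\|h\|_{C^{\frac{2+\alpha}{2},2+\alpha}}\le R_{STE}(\alpha)$.

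The main obstacle I expect is the careful verification that the boundary conditions $\eqref{EquationSystemForH}_3$--$\eqref{EquationSystemForH}_4$, once rewritten in terms of $h$ and its $x$-derivative at $x=1$, form (together with the Dirichlet condition at $x=0$) an admissible set of boundary operators for the parabolic system — i.e.\ the Lopatinskii--Shapiro condition and the correct count and form of the compatibility conditions \eqref{EquationAnalyticCompatibilityCondition}. This is where the anisotropy enters nontrivially, since $\eqref{EquationSystemForH}_4$ involves $D\varphi^{\circ}(\nu_h^i)$ rather than $\nu_h^i$ itself, but Lemma \ref{lemma2.1} and the ellipticity of $\varphi^{\circ}$ (which makes $D^2\varphi^{\circ}$ positive definite on $\tau_\ast^i$) should make the linearization of this condition non-degenerate, paralleling the invertibility of the matrix $F$ in Lemma \ref{LemmaFredholmPropEPrimePrime}. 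Everything else is a routine, if lengthy, transcription of the short time existence machinery of \cite{KroenerNovagaPozziAnisoCurvFloImmeredNetworks} into the graph-over-$\Gamma_\ast$ coordinates, so in the write-up I would keep those parts brief and refer to \cite{KroenerNovagaPozziAnisoCurvFloImmeredNetworks}.
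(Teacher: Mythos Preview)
Your overall strategy coincides with the paper's: derive the explicit linearization of \eqref{EquationSystemForH} at $h\equiv 0$, verify the Lopatinskii--Shapiro conditions so that Solonnikov's linear theory applies, and then run a contraction/fixed-point argument on balls $B^{h_0}_{R,T}$ with uniform $T_{STE}$ and $R_{STE}$ for small $\|h_0\|_{C^{2+\alpha}}$. The paper also refers back to \cite{KroenerNovagaPozziAnisoCurvFloImmeredNetworks} for the detailed contraction estimates, just as you propose.

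The one place where your sketch is off is the verification of the Lopatinskii--Shapiro condition at $x=1$. You suggest it goes ``exactly as in Lemma~\ref{LemmaFredholmPropEPrimePrime}'', i.e.\ via the invertibility of a $2\times 2$ matrix built from boundary data. That analogy is misleading: the Fredholm computation in Lemma~\ref{LemmaFredholmPropEPrimePrime} is a finite-dimensional kernel/cokernel count, whereas the LS condition requires showing that the associated constant-coefficient ODE system on $[0,\infty)$ with the linearized boundary operators has only the trivial decaying solution for all $\lambda$ with $\mathrm{Re}(\lambda)>0$. The paper carries this out explicitly by an energy argument: the three ODEs are tested against carefully weighted conjugates (the weights involving $\tilde\alpha^i_\ast$, $\varphi^\circ(\nu_\ast^i)$, and ratios $(\tau_\ast^i\cdot\nu_\ast^j)/(\tau_\ast^k\cdot\nu_\ast^j)$), integrated over $(0,\infty)$, and integrated by parts; the boundary contribution at $y=0$ cancels precisely thanks to the linearized versions of $\eqref{EquationSystemForH}_3$--$\eqref{EquationSystemForH}_4$, and the remaining bulk terms all have positive real part by Lemma~\ref{lemma2.1}i.), the positivity of the $\tilde\alpha^i_\ast$, and the sign condition \eqref{EquationSignConditionProductNuTau} coming from \eqref{EquationAngleRestriction}. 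Neither the transfer-from-the-$u$-formulation shortcut nor the $2\times2$ matrix from Lemma~\ref{LemmaFredholmPropEPrimePrime} substitutes for this computation, so in your write-up you should either reproduce this energy argument or point precisely to where it (or an equivalent) is done.
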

\begin{rem}[Compatibility conditions]\label{RemarkCompatibilityConditionsH}\ \\
The compatibility conditions have the same motivation as in Remark \ref{RemarkMotivationSolution}v.). Hereby, $\eqref{EquationAnalyticCompatibilityCondition}_1$ postulates the boundary conditions for the system at $t=0$. $\eqref{EquationAnalyticCompatibilityCondition}_2$ resp. $\eqref{EquationAnalyticCompatibilityCondition}_3$ arise from differentiation of $\eqref{EquationSystemForH}_2$ resp. $\eqref{EquationSystemForH}_3$ in time using \eqref{EquationParabolicVersionOfPDE}. Note that $\eqref{EquationAnalyticCompatibilityCondition}_2$ is a simplified statement using the positivity of $(\varphi^{\circ}(\nu^i_h)(D^2\varphi^{\circ}(\nu^i_h)\tau^i_h\cdot\tau^i_h)$ and the invertibility of $F_{h_0}^{-1}$.
\end{rem}
\begin{rem}\label{rem-43}
With the same notation and assumptions as in  Lemma~\ref{LemmaExistenceRefFramGraphPara}: if $(\gamma^{i})_{i=1,2,3}$ is a geometrically admissible network according Definition~\ref{def-gan}, then $(h^{i})_{i=1,2,3}$ satisfies \eqref{EquationAnalyticCompatibilityCondition}.
\end{rem}
\begin{proof}
This is a direct consequence of Lemma~\ref{LemmaRelationNormalTangentialPart} and the geometric nature of the boundary conditions.
\end{proof}
	\begin{proof}
		The proof of Lemma~\ref{LemmaSTEForH} can be carried out analogously to \cite[Section 3]{KroenerNovagaPozziAnisoCurvFloImmeredNetworks}, using a fixed-point argument after showing an existence result for a suitable linear problem. Additionally, we want the uniform existence time $T_{STE}(\alpha)$ for small enough $h_0$ as stated above. But by carefully observing the contraction estimates one sees that directly. This is similar to the argumentation in \cite[Section 6]{GarckeGoessweinOnTheSurfaceDiffusionFlow}, especially the choice of $X^{\varepsilon}_{R,\delta}$ therein. As we will use the result on the linear problem again in Lemma \ref{LemmaParabolicSmoothigH}, we  give now  some comments on how to derive and solve it. Afterwards, we will sketch the proof of the fixed-point argument.
		
		\smallskip
		
		\underline{\textit{Linear problem:}} As opposed to \cite[Section 3]{KroenerNovagaPozziAnisoCurvFloImmeredNetworks} we have to study not any suitable linear problem but the proper linearization of the problem, i.e., the linearization in $h\equiv 0$ of the differential $\mathcal{A}_{NL}$ and boundary operator $\mathcal{B}_{NL}$ as functions in $h$, which are given by
	\begin{align}
		\mathcal{A}_{NL}: (C^{\frac{2+\alpha}{2},2+\alpha}([0,T]\times[0,1]))^3&\to (C^{\frac{\alpha}{2},\alpha}([0,T]\times[0,1]))^3,\\
		h&\mapsto (V^i_h-\varphi^{\circ}(\nu^i_h)(D^2\varphi^{\circ}(\nu^i_h)\tau^i_h\cdot\tau^i_h)\kappa^i_h)_{i=1,2,3},\\
		\mathcal{B}_{NL}: (C^{\frac{2+\alpha}{2},2+\alpha}([0,T]\times[0,1]))^3&\to (C^{\frac{2+\alpha}{2}}([0,T]))^4\times (C^{\frac{1+\alpha}{2}}([0,T]))^2,\label{EquationSystemForHBoundaryOperator}\\
		h&\mapsto (h(t,0), \sum_{i=1}^3\tilde{\alpha}_{\ast}^ih^i(t,1), \sum_{i=1}^3D\varphi^{\circ}(\nu^i_h(t,1)),
	\end{align}
	where recall that $(V_h^i)_{i=1,2,3}=F_h\partial_t h$.
Hence, for the linear analysis we compute 
\begin{align*}
	\underset{\varepsilon\to 0}{\lim}\frac{\mathcal{A}_{NL}(\varepsilon h)-\mathcal{A}_{NL}(0)}{\varepsilon},\qquad \qquad \underset{\varepsilon\to 0}{\lim}\frac{\mathcal{B}_{NL}(\varepsilon h)-\mathcal{B}_{NL}(0)}{\varepsilon},
\end{align*}
and consider the resulting PDE system. 
First notice that due to $\kappa_{\ast}^i\equiv 0, i=1,2,3,$ we have that
\begin{align*}
\frac{d}{d\varepsilon}\Big|_{\varepsilon=0}\mathcal{A}_{NL}(\varepsilon h) = \left(\partial_{t} h^{i} -
\varphi^{\circ}(\nu^i_{\ast})(D^2\varphi^{\circ}(\nu^i_{\ast})\tau^i_{\ast}\cdot\tau^i_{\ast})
\frac{d}{d\epsilon}\Big|_{\epsilon=0} \kappa^{i}_{\varepsilon h} \right)_{i=1,2,3}
\end{align*}
and, upon recalling \eqref{EquationFormulaKappaH0}, we compute
\begin{align*}
\frac{d}{d\epsilon}\Big|_{\epsilon=0} \kappa_{\varepsilon h}=\frac{h_{xx}}{|\gamma_{\ast}'|^{2}}-h_{x} \langle \frac{\gamma_{\ast}^{''}}{|\gamma_{\ast}'|^{3}}, \tau_{\ast} \rangle.
\end{align*}
For the boundary terms we observe that $\eqref{EquationSystemForH}_4$ is equivalent to the two conditions
		\begin{align*}
			\sum_{i=1}^3D\varphi^{\circ}(\nu_h^i(t,1))\cdot\nu_{\ast}^1&=0,\\
			\sum_{i=1}^3D\varphi^{\circ}(\nu_h^i(t,1))\cdot\nu_{\ast}^2&=0.
		\end{align*} 
	    Now we have that (recall Lemma \ref{lemma2.1}iii.))
	    \begin{align*}
	    \left(\frac{d}{d\varepsilon}D\varphi^{\circ}(\nu_{\varepsilon h}(t,1))\right)\big|_{\varepsilon=0}&=D^2\varphi^{\circ}(\nu_{\ast})\left(\frac{d}{d\varepsilon}(\nu_{\varepsilon h}(t,1))\right)\big|_{\varepsilon=0}=-D^2\varphi^{\circ}(\nu_{\ast})(h_x(t,1)\tau_{\ast})\\
	    &=-(D^2\varphi^{\circ}(\nu_{\ast})\tau_{\ast} \cdot\tau_{\ast})h_x(t,1) \tau_{\ast}.
	    \end{align*}
    For the second last identity recall that the normal is given as anticlockwise rotation of the tangent, thus
    \begin{align}\label{EquationFormulaNormalFunctionInH}
	\nu_{h}=\left(\frac{\gamma_{\ast}'+h_x\nu_{\ast}+\mu(h_x)\tau_{\ast}}{\sqrt{(h_x)^2+(|\gamma_{\ast}'|+\mu(h_x))^2}}\right)^{\bot}.
\end{align}
An easy calculation shows that
\begin{align*}
	\frac{d}{d\varepsilon}\nu_{\varepsilon h}\big|_{\varepsilon=0}=\left(\frac{|\gamma_{\ast}'|(h_x\nu_{\ast}+\mu(h_{x})\tau_{\ast})-\frac{1}{|\gamma_{\ast}'|}\mu(h_x)|\gamma_{\ast}'|\gamma_{\ast}'}{|\gamma_{\ast}'|^2}\right)^{\bot}.
\end{align*}
Now using that we chose $\gamma_{\ast}'$ such that $\gamma_{\ast}'(1)=\tau_{\ast}$ this shows
\begin{align*}
	\frac{d}{d\varepsilon}\nu_{\varepsilon h}(t,1)\big|_{\varepsilon=0}=-h_x(t,1)\tau_{\ast}.
\end{align*}
Collecting all previous calculations,  the linearization of \eqref{EquationSystemForH} in $h\equiv 0$  is given by:
		\begin{align}\label{EquationSystemForHLinearized}
			\begin{cases}
				\partial_th^i=\frac{1}{|(\gamma_{\ast}^i)'|^2}\varphi^{\circ}(\nu^i_{\ast})(D^2\varphi^{\circ}(\nu^i_{\ast})\tau^i_{\ast}\cdot\tau^i_{\ast})\partial_{xx}h^i+\mathcal{A}^i_{LOT}h^i& \text{on }[0,T]\times(0,1), i=1,2,3,\\
				h^i(t,0)=0 & \forall t\in[0,T], i=1,2,3,\\
				\sum_{i=1}^3\tilde{\alpha}^i_{\ast}h^i(t,1)=0 &\forall t\in[0,T],\\
				\partial_xh^2(t,1)=-\frac{\DZweiNuTauTau{3}{3}(\tau_{\ast}^3\cdot\nu_{\ast}^1)}{\DZweiNuTauTau{2}{2}(\tau_{\ast}^2\cdot\nu_{\ast}^1)}\partial_xh^3(t,1) &\forall t\in[0,T],\\
				\partial_xh^1(t,1)=-\frac{\DZweiNuTauTau{3}{3}(\tau_{\ast}^3\cdot\nu_{\ast}^2)}{\DZweiNuTauTau{1}{1}(\tau_{\ast}^1\cdot\nu_{\ast}^2)}\partial_xh^3(t,1) &\forall t\in[0,T],\\
				h(0,x)=h_0(x) & \forall x\in [0,1], i=1,2,3.
			\end{cases}
		\end{align}
		Hereby, we used the abbreviation \begin{align}\label{EquatioLinearSystemHLOT} \mathcal{A}^i_{LOT}h^i:=-\varphi^{\circ}(\nu^i_{\ast})(D^2\varphi^{\circ}(\nu^i_{\ast})\tau^i_{\ast}\cdot\tau^i_{\ast})\langle \frac{(\gamma_{\ast}^i)^{''}}{|(\gamma_{\ast}^i)'|^{3}}, \tau_{\ast} \rangle h^{i}_{x}.
		\end{align} 
		 Note that the coupled system in \eqref{EquationSystemForH} gets decoupled in the linearization as the tangential parts do not contribute there. This is very natural due to the geometric nature of the problem.
		 
		 \smallskip
		 
		 \underline{\textit{Lopatinskii-Shapiro conditions:}} The system  \eqref{EquationSystemForHLinearized} can be discussed using the theory in \cite{SolonnikovExistenceTheory}. Besides the parabolicity, which is guaranteed due to Lemma \ref{lemma2.1}i.), the most technical part in the application is the verification of the Lopatinskii-Shapiro conditions. Thus we want to explain how one can prove them. Hereby, we prefer to work with their ODE formulation which can be found, e.g., in \cite[(6), (7)]{LatushkinPruessSchnaubeltStableUnstableManifolds}.
		 A proof of the equivalence to the standard version can be found in \cite[Section I.2]{EidelmanZhitarashuParabolicBoundaryValueProblems}. At the triple junction these read as follows: show that for any $\lambda\in\mathbb{C}$ with $\mathrm{Re}(\lambda)>0$ the only continuous solution $(\varphi^i)_{i=1,2,3}$ with $\varphi^i: [0,\infty)\to\mathbb{C}, \varphi^i(y)\overset{y\to\infty}{\rightarrow}0, i=1,2,3$  of
		 \begin{align*}
		 	\lambda\varphi^i-\varphi^{\circ}(\nu_{\ast}^i)(D^2\varphi^{\circ}(\nu_{\ast}^i)\tau_{\ast}^i\cdot\tau_{\ast}^i)\varphi_{yy}^{i}&=0, & &y>0, i=1,2,3,\\
		 	\sum_{i=1}^3\tilde{\alpha}_{\ast}^i\varphi^i&=0, & &y=0,\\
		 	\varphi^2_y+\frac{\DZweiNuTauTau{3}{3}(\tau_{\ast}^3\cdot\nu_{\ast}^1)}{\DZweiNuTauTau{2}{2}(\tau_{\ast}^2\cdot\nu_{\ast}^1)}\varphi_y^3&=0, & &y=0, \\
		 	\varphi^1_y+\frac{\DZweiNuTauTau{3}{3}(\tau_{\ast}^3\cdot\nu_{\ast}^2)}{\DZweiNuTauTau{1}{1}(\tau_{\ast}^1\cdot\nu_{\ast}^2)}\varphi_y^3&=0, & &y=0,
		 \end{align*}
		 is the zero solution. Note that the $|\gamma_{\ast}'|^{-1}$-coefficient in the first line is equal to one due to \eqref{EquationConditionParaOfGammaAst}.
		 Observe that $\frac{\DZweiNuTauTau{3}{3}(\tau_{\ast}^3\cdot\nu_{\ast}^1)}{\DZweiNuTauTau{2}{2}(\tau_{\ast}^2\cdot\nu_{\ast}^1)}$ and $\frac{\DZweiNuTauTau{3}{3}(\tau_{\ast}^3\cdot\nu_{\ast}^2)}{\DZweiNuTauTau{1}{1}(\tau_{\ast}^1\cdot\nu_{\ast}^2)}$ have a negative sign due to Lemma~\ref{lemma2.1}i.) and Assumption \ref{RemarkAssumstionsGammaast}, which in particular implies that
		 	\begin{align}\label{EquationSignConditionProductNuTau}
		 		\frac{\tau_{\ast}^3\cdot\nu_{\ast}^1}{\tau_{\ast}^2\cdot\nu_{\ast}^1}<0\quad \wedge \quad \frac{\tau_{\ast}^3\cdot\nu_{\ast}^2}{\tau_{\ast}^1\cdot\nu_{\ast}^2}<0.
		 	\end{align}
		 	This is due to the fact that the angles between any two tangents $\tau_{\ast}^{i}$ and $\tau_{\ast}^{j}$ lies in  $(0,\pi)$.
				Then the normal $\nu_{\ast}^2$ resp. $\nu_{\ast}^1$ will have an 
			angle increased by $\frac{\pi}{2}$  with one of the $\tau_{\ast}^i$ and an 
			angle decreased by $\frac{\pi}{2}$ with the other. Thus, one contact angle will be in $(-\frac{\pi}{2},\frac{\pi}{2})$ and the other one in $(\frac{\pi}{2}, \frac{3\pi}{2})$. This then implies \eqref{EquationSignConditionProductNuTau}.
			  Now, testing the equation for
		 	\begin{itemize}
		 		\item $\varphi^1$ with $-\frac{(\tau_{\ast}^1\cdot\nu_{\ast}^2)}{\varphi^{\circ}(\nu_{\ast}^1)\DZweiNuTauTau{3}{3}(\tau_{\ast}^3\cdot\nu_{\ast}^2)}\tilde{\alpha}^1_{\ast}\bar{\varphi}^1$,
		 		\item $\varphi^2$  with $-\frac{(\tau_{\ast}^2\cdot\nu_{\ast}^1)}{\varphi^{\circ}(\nu_{\ast}^2)\DZweiNuTauTau{3}{3}(\tau_{\ast}^3\cdot\nu_{\ast}^1)}\tilde{\alpha}^2_{\ast}\bar{\varphi}^2$,
		 		\item $\varphi^3$ with $\frac{1}{\varphi^{\circ}(\nu^3_{\ast})\DZweiNuTauTau{3}{3}}\tilde{\alpha}^3_{\ast}\bar{\varphi}^3$,
		 	\end{itemize} 
	 
		 summing up, integrating over $\int_{0}^{\infty} \ldots dy$, and integrating by parts yields for the boundary term
		 	\begin{align*}
		 		&\left(\frac{\DZweiNuTauTau{1}{1}(\tau_{\ast}^1\cdot\nu_{\ast}^2)}{\DZweiNuTauTau{3}{3}(\tau_{\ast}^3\cdot\nu_{\ast}^2)}\tilde{\alpha}^1_{\ast}\varphi_y\bar{\varphi}^1+\frac{\DZweiNuTauTau{2}{2}(\tau_{\ast}^2\cdot\nu_{\ast}^1)}{\DZweiNuTauTau{3}{3}(\tau_{\ast}^3\cdot\nu_{\ast}^1)}\tilde{\alpha}^2_{\ast}\varphi^2_y\bar{\varphi}^2-\tilde{\alpha}_{\ast}^3\varphi_y^3\bar{\varphi}^3\right)\big|_{y=0}\\
		 		=&\left(-\varphi_y^3\tilde{\alpha}_{\ast}^1\bar{\varphi}^1-\varphi_y^3\tilde{\alpha}_{\ast}^2\bar{\varphi}^2-\varphi_y^3\tilde{\alpha}_{\ast}^3\bar{\varphi}^3\right)\big|_{y=0}=\left(-\varphi_y^3\sum_{i=1}^3\tilde{\alpha}_{\ast}^i\bar{\varphi}^i\right)\big|_{y=0}=0,
		 	\end{align*}
		 	where we used the boundary conditions at $y=0$.
		
			 Therefore we conclude that 
		 	\begin{align*}
		 		\int_0^{\infty}-&\lambda\frac{(\tau_{\ast}^1\cdot\nu_{\ast}^2)}{\varphi^{\circ}(\nu_{\ast}^1)\DZweiNuTauTau{3}{3}(\tau_{\ast}^3\cdot\nu_{\ast}^2)}\tilde{\alpha}^1_{\ast}|\varphi^1|^2-\lambda\frac{(\tau_{\ast}^2\cdot\nu_{\ast}^2)}{\varphi^{\circ}(\nu_{\ast}^3)\DZweiNuTauTau{3}{3}(\tau_{\ast}^3\cdot\nu_{\ast}^1)}\tilde{\alpha}^2_{\ast}|\varphi^2|^2\\
		 		+&\lambda\frac{1}{\varphi^{\circ}(\nu^3_{\ast})\DZweiNuTauTau{3}{3}}\tilde{\alpha}^3_{\ast}|\varphi^3|^2-\frac{\DZweiNuTauTau{1}{1}(\tau_{\ast}^1\cdot\nu_{\ast}^2)}{\DZweiNuTauTau{3}{3}(\tau_{\ast}^3\cdot\nu_{\ast}^2)}\tilde{\alpha}^1_{\ast}|\varphi^1_y|^2\\
		 		-&\frac{\DZweiNuTauTau{2}{2}(\tau_{\ast}^2\cdot\nu_{\ast}^1)}{\DZweiNuTauTau{3}{3}(\tau_{\ast}^3\cdot\nu_{\ast}^1)}\tilde{\alpha}^2_{\ast}|\varphi^2_y|^2+\tilde{\alpha}^3_{\ast}|\varphi^3_y|^2dy=0
		 	\end{align*}
		 
		 Again using Assumption \ref{RemarkAssumstionsGammaast}, Lemma \ref{lemma2.1}i.), \eqref{EquationSignConditionProductNuTau} and the positive sign of the $\tilde{\alpha}^i_{\ast}$ we see that all appearing terms have a real part with a positive sign and therefore we conclude that $\varphi^i\equiv 0, i=1,2,3.$
		  This shows the Lopatinskii-Shapiro conditions in $x=1$. In $x=0$ they are easier to verify as the boundary conditions are easier to handle. In total we have seen that the linear problem can be discussed using the theory in \cite{SolonnikovExistenceTheory}. 
		  
		  \smallskip
		  
		\underline{\textit{Fixed-point argument:}} With the knowledge about the solvability of the linear problem \eqref{EquationSystemForHLinearized} we are now able to construct the solution of \eqref{EquationSystemForH} as a fixed-point of a suitable functional. We will not do the detailed calculations here as they are quite voluminous and basically completely analogous to the ones done in \cite[Section 3.2]{KroenerNovagaPozziAnisoCurvFloImmeredNetworks}. We will only give the general structure and one exemplary calculation.
		
		For given $\tilde{h}\in (C^{\frac{2+\alpha}{2}, 2+\alpha}([0,T]\times [0,1]))^3$ we consider the mapping $\Lambda: \tilde{h}\to h$, where $h$ is the solution of
		\begin{align}\label{EquationSystemForFixedPointArgument}
			\begin{cases}
				\partial_th^i-\frac{1}{|(\gamma_{\ast}^i)'|^{2}}\varphi^{\circ}(\nu^i_{\ast})(D^2\varphi^{\circ}(\nu^i_{\ast})\tau^i_{\ast}\cdot\tau^i_{\ast})\partial_{xx}h^i-\mathcal{A}^i_{LOT}h^i=\mathfrak{f}^i(\tilde{h}) & \text{on }[0,T]\times(0,1), i=1,2,3,\\
				h^i(t,0)=0 & \forall t\in[0,T], i=1,2,3,\\
				\sum_{i=1}^3\tilde{\alpha}^i_{\ast}h^i(t,1)=0 &\forall t\in[0,T],\\
				\partial_xh^2(t,1)+\frac{\DZweiNuTauTau{3}{3}(\tau_{\ast}^3\cdot\nu_{\ast}^1)}{\DZweiNuTauTau{2}{2}(\tau_{\ast}^2\cdot\nu_{\ast}^1)}\partial_xh^3(t,1)=\mathfrak{b}^3(\tilde{h}) &\forall t\in[0,T],\\
				\partial_xh^1(t,1)+\frac{\DZweiNuTauTau{3}{3}(\tau_{\ast}^3\cdot\nu_{\ast}^2)}{\DZweiNuTauTau{1}{1}(\tau_{\ast}^1\cdot\nu_{\ast}^2)}\partial_xh^3(t,1)=\mathfrak{b}^4(\tilde{h}) &\forall t\in[0,T],\\
				h(0,x)=h_0(x) & \forall x\in [0,1], i=1,2,3.
			\end{cases}
		\end{align} 
	     with the right-hand sides 
	    \begin{align*}
	    	\mathfrak{f}^i(\tilde{h})&:=(\partial_t\tilde{h}^i-V_{\tilde{h}}^i)-\left(\varphiDZweiNuTauTau{i}{i}\frac{\partial_{xx}\tilde{h}^i}{|(\gamma_{\ast}^{i})'|^2}-\varphi^{\circ}(\nu^i_{\tilde{h}})(D^2\varphi^{\circ}(\nu^i_{\tilde{h}})\tau^i_{\tilde{h}}\cdot\tau^i_{\tilde{h}})\kappa^i_{\tilde{h}}\right)-\mathcal{A}^i_{LOT} \tilde{h}^i,\\
	    	\mathfrak{b}^3(\tilde{h})&:=\partial_x \tilde{h}^2(t,1)+\frac{\DZweiNuTauTau{3}{3}(\tau_{\ast}^3\cdot\nu_{\ast}^1)}{\DZweiNuTauTau{2}{2}(\tau_{\ast}^2\cdot\nu_{\ast}^1)}\partial_x \tilde{h}^3(t,1)-\sum_{i=1}^3D\varphi^{\circ}(\nu_{\tilde{h}}^i(t,1))\cdot\nu_{\ast}^1,\\
	    	\mathfrak{b}^4(\tilde{h})&:=\partial_x \tilde{h}^1(t,1)+\frac{\DZweiNuTauTau{3}{3}(\tau_{\ast}^3\cdot\nu_{\ast}^2)}{\DZweiNuTauTau{1}{1}(\tau_{\ast}^1\cdot\nu_{\ast}^2)}\partial_x \tilde{h}^3(t,1)-\sum_{i=1}^3D\varphi^{\circ}(\nu_{\tilde{h}}^i(t,1))\cdot\nu_{\ast}^2.
	    \end{align*} 
    Now we can use the Schauder estimates, which are part of the linear existence theory in \cite{SolonnikovExistenceTheory}, and obtain that
    \begin{align*}
    	\|h\|_{(C^{\frac{2+\alpha}{2}, 2+\alpha}([0,T]\times [0,1]))^3}\le C(&\|\mathfrak{f}(\tilde{h})\|_{(C^{\frac{\alpha}{2}, \alpha}([0,T]\times [0,1]))^3}+\|\mathfrak{b}^3(\tilde{h})\|_{C^{\frac{1+\alpha}{2}}([0,T])}+\|\mathfrak{b}^4(\tilde{h})\|_{C^{\frac{1+\alpha}{2}}([0,T])}\\
    	+&\|h_0\|_{(C^{2+\alpha}([0,1]))^3} ).
\end{align*}
    This shows that if we can prove contractivity of the mapping $\tilde{h}\to(\mathfrak{f}(\tilde{h}), \mathfrak{b}^3(\tilde{h}),\mathfrak{b}^4(\tilde{h}))$, $\Lambda$ will inherit this contractivity \footnote{Note that the difference $\Lambda(\tilde{h}_A)-\Lambda(\tilde{h}_B)$ for given $\tilde{h}_A, \tilde{h}_B$ solves \eqref{EquationSystemForFixedPointArgument} with zero initial data and right-hand sides $\mathfrak{f}(\tilde{h}_A)-\mathfrak{f}(\tilde{h}_B), \mathfrak{b}^3(\tilde{h}_A)-\mathfrak{b}^3(\tilde{h}_B)$ and $\mathfrak{b}^4(\tilde{h}_A)-\mathfrak{b}^4(\tilde{h}_B)$.}. More precisely, we will restrict the domain of $\Lambda$ to sets of the form
    \begin{align*}
    	B^{h_0}_{R,T}:=\{h\in (C^{\frac{2+\alpha}{2}, 2+\alpha}([0,T]\times [0,1]))^3\big| h\big|_{t=0}=h_0, \|h-h_0\|_{(C^{\frac{2+\alpha}{2}, 2+\alpha}([0,T]\times [0,1]))^3}\le R \}
\end{align*}
for $h_0\in (C^{2+\alpha}([0,1]))^3$. Then we are able to show that for arbitrary $R$ we can choose $T$ and $\varepsilon:=\|h_0\|_{(C^{2+\alpha}([0,1]))^3}$ small enough such that $\Lambda$ is a $\frac{1}{2}$-contraction on this set. Then by choosing $R$ large enough we will also get that $\Lambda$ is a self-mapping. The resulting values of $\varepsilon, R, T$ then correspond to the $\varepsilon_{STE}(\alpha), R_{STE}(\alpha), T_{STE}(\alpha)$ from the statement of the result.

The main difficulty is to understand how the contraction estimates can be derived.
     We will discuss the contraction estimates for the first summand in 
   $  \mathfrak{f}(\tilde{h}_A)-\mathfrak{f}(\tilde{h}_B)$ to give an idea of the general strategy. For simplicity we will state the function spaces  without the domains where they are defined on. 
Note that by the choice of domain $B^{h_0}_{R,T}$ all elements $\tilde{h}$ in this ball 
   will have a norm less than $R+\varepsilon$, where $\varepsilon$ denotes the norm of $h_0$. Now, for $h_A, h_B \in B^{h_0}_{R,T}$  
   we have that (recall \eqref{EquationNormalVelocityInH})
    \begin{align*}
    	(\partial_t h_A-V_{h_A})-(\partial_t h_B -V_{h_B})&=\underbrace{\partial_t h_A(1-\nu_{h_A}\cdot\nu_{\ast})-\partial_th_B(1-\nu_{h_B}\cdot\nu_{\ast})}_{(I)}\\
    	&\quad - (\underbrace{\partial_t\mu(h_A)(\nu_{h_A}\cdot\tau_{\ast})-\partial_t\mu(h_B)(\nu_{h_B}\cdot\tau_{\ast})}_{(II)}).
\end{align*}
    Now, we can write $(I)$ as
    \begin{align*}
    (I)=\underbrace{\partial_t h_A(\nu_{h_B}\cdot\nu_{\ast}-\nu_{h_A}\cdot\nu_{\ast})}_{(A)}+\underbrace{(\partial_th_A-\partial_t h_B)(1-\nu_{h_B}\cdot\nu_{\ast})}_{(B)}.
\end{align*}
    For $(B)$ we have that
    \begin{align*}
    	\|(B)\|_{C^{\frac{\alpha}{2},\alpha}}&\le \|\partial_th_A-\partial_t h_B\|_{C^{\frac{\alpha}{2},\alpha}}\|(1-\nu_{h_B}\cdot\nu_{\ast})\|_{C^{\frac{\alpha}{2},\alpha}}\\
    	&\le \|h_A-h_B\|_{C^{\frac{2+\alpha}{2}, 2+\alpha}}(\varepsilon+T^{\beta}\|1-\nu_{h_B}\cdot\nu_{\ast}\|_{C^{\frac{1+\alpha}{2}, 1+\alpha}})\\
    	&\le C\|h_A-h_B\|_{C^{\frac{2+\alpha}{2}, 2+\alpha}}(\varepsilon+T^{\beta}\|h_B\|_{C^{\frac{2+\alpha}{2}, 2+\alpha}})\\
    	&\le C((R+\varepsilon)T^{\beta}+\varepsilon)\|h_A-h_B\|_{C^{\frac{2+\alpha}{2}, 2+\alpha}}.
\end{align*} 
Hereby, we used product estimates for parabolic H\"older spaces, contractivity of lower order terms ($\beta$ is a suitable exponent in $(0,1)$) and Lipschitz continuity of the function $h\mapsto \nu_{h_B}\cdot\nu_{\ast}$, which maps the zero function to $1$. All these results can be found for instance in \cite[Appendix A]{KroenerNovagaPozziAnisoCurvFloImmeredNetworks} and \cite[Section 2]{GarckeGoessweinOnTheSurfaceDiffusionFlow}. Note that for the contractivity of the lower order terms we need \cite[Lemma 2.7]{GarckeGoessweinOnTheSurfaceDiffusionFlow}, which allows for initial data unequal to zero. Therefore we see that for any $R$ we can choose $T$ and $\varepsilon$ small enough such that we have a $\frac{1}{2}$-contraction. The estimate for $(A)$ is similar. Now, for $(II)$ we have that
\begin{align*}
	(II)=\underbrace{\partial_t\mu(h_A)(\nu_{h_A}\cdot\tau_{\ast}-\nu_{h_B}\cdot\tau_{\ast})}_{(A)}+\underbrace{(\partial_t\mu(h_A)-\partial_t\mu(h_B))(\nu_{h_B}\cdot\tau_{\ast})}_{(B)}.
\end{align*}
    For $(B)$ we have that
    \begin{align*}
    	\|(B)\|_{C^{\frac{\alpha}{2},\alpha}}&\le \|\partial_t\mu(h_A)-\partial_t\mu(h_B)\|_{C^{\frac{\alpha}{2},\alpha}}\|\nu_{h_B}\cdot\tau_{\ast}\|_{C^{\frac{\alpha}{2},\alpha}}\\
    	&\le C \|\mu(h_A-h_B)\|_{C^{\frac{2+\alpha}{2},2+\alpha}}(\varepsilon+T^{\beta}\|\nu_{h_B}\cdot\tau_{\ast}\|_{C^{\frac{1+\alpha}{2}, 1+\alpha}})\\
    	&\le C((R+\varepsilon)T^{\beta}+\varepsilon)\|h_A-h_B\|_{C^{\frac{2+\alpha}{2}, 2+\alpha}}.
\end{align*}
Again, we see that for any $R$ we can choose $T$ and $\varepsilon$ small enough such that we have a $\frac{1}{2}$-contraction. The estimate for $(A)$ is similar. In particular, observe that all estimates do not depend directly on $h_0$ but only its norm. Thus, we get the sought uniform estimate for $h_0$ small enough.
 For the boundary conditions the discussion is slightly   
 more complicated as we do not have a quasilinear structure there. But one can argue as for (118) in \cite[Lemma 6.5]{GarckeGoessweinOnTheSurfaceDiffusionFlow}.  
	\end{proof}
For the stability analysis we will additionally need that our solution has higher regularity and - even more importantly - that we have a control for these higher norms. For this we use ideas from \cite[Section 4]{GarckeGoessweinNonlinearStabilityDoubleBubbleSDF}. Mainly, we will apply Angenent's parameter trick. This would allow us to get space regularity of any order away from $t=0$. But the required bootstrap argument would be unnecessary complicated for what we need later, therefore we restrict to estimates for the $C^{3+\alpha}$-norm in space.

	\begin{lemma}[Smoothing estimates for $h$] \label{LemmaParabolicSmoothigH}\ \\
	Let $\Gamma_{\ast}^i, \gamma_{\ast}^i, \tilde{\alpha}_{\ast}^i, i=1,2,3$ fulfil Assumption \ref{RemarkAssumstionsGammaast}, $\alpha\in(0,1)$. Moreover for  $h_{0} \in C^{2+\alpha}$ satisfying
	the compatibility conditions \eqref{EquationAnalyticCompatibilityCondition}, let $h\in (C^{\frac{2+\alpha}{2},2+\alpha}([0,T]\times[0,1]))^3$ solve \eqref{EquationSystemForH} for some $T>0$ with initial data $h_0$. Additionally, let $R(\alpha)>0$ be small enough  
	such that  
		\begin{align} \label{EquationAssumptionBoundH}
			\|h\|_{(C^{\frac{2+\alpha}{2},2+\alpha}([0,T]\times[0,1]))^3}\le R(\alpha).
		\end{align}
		Then there 
		exists for every $T_k\in (0,T)$ close to $T$ a $\varepsilon_k>0$ such that, if
		 \begin{align}
			\|h_0\|_{(C^{2+\alpha}([0,1]))^3}\le \varepsilon_k,
		\end{align} 
		then
		 for any $t_k\in (0,T_k)$  there exists $C(t_k, T_k)>0$ such that for all $t\in[t_k,T_k]$ 
		we have  $h(t,\cdot)\in (C^{3+\alpha}([0,1]))^3$ and 
		\begin{align}\label{EquationSmoothingForH}
			\|h(t,\cdot)\|_{(C^{3+\alpha}( [0,1]))^3}\le C(R(\alpha), C(t_k, T_k))(1+\|h_0\|_{(C^{2+\alpha}([0,1]))^3}).
		\end{align}
	\end{lemma}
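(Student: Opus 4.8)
The plan is to derive the higher space regularity and the bound \eqref{EquationSmoothingForH} by Angenent's parameter trick, following \cite[Section~4]{GarckeGoessweinNonlinearStabilityDoubleBubbleSDF}, and then to turn the resulting interior-in-time (time) regularity into space regularity by reading it off the quasilinear equation \eqref{EquationParabolicVersionOfPDE}. The two ingredients are: the unique solvability with Schauder estimates of the linearized problem \eqref{EquationSystemForHLinearized}, established in the proof of Lemma~\ref{LemmaSTEForH} via \cite{SolonnikovExistenceTheory} (the Lopatinskii--Shapiro conditions being the technical core); and the analyticity, as maps between parabolic H\"older spaces, of all nonlinearities occurring in \eqref{EquationSystemForH} and in the boundary operator \eqref{EquationSystemForHBoundaryOperator}, which follows exactly as in Lemma~\ref{LemmaAnalyticityEPrime} using Lemma~\ref{LemmaPropertiesAnalyticFunction}, the analyticity of $\varphi^{\circ}$ away from the origin, and the fact that parabolic H\"older spaces are Banach algebras.

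Fix $T_k\in(0,T)$ and pick $\delta\in(0,\frac{T}{T_k}-1)$. For $\lambda\in(1-\delta,1+\delta)$ set $h_\lambda(t,x):=h(\lambda t,x)$ on $[0,T_k]\times[0,1]$, which is well defined since $\lambda T_k<T$. Using \eqref{EquationParabolicVersionOfPDE} and \eqref{EquationFormulaKappaH0} one checks directly that $h_\lambda$ solves the problem obtained from \eqref{EquationSystemForH} by scaling the curvature term in $\eqref{EquationSystemForH}_1$ by $\lambda$, with the boundary conditions $\eqref{EquationSystemForH}_2$--$\eqref{EquationSystemForH}_4$ and the initial datum $h_0$ unchanged, all of these being invariant under $t\mapsto\lambda t$. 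Define $\Psi(\lambda,v):=\big(\mathcal{A}_{NL}^{(\lambda)}(v),\,\mathcal{B}_{NL}(v),\,v|_{t=0}-h_0\big)$ for $v$ in a small neighborhood of $h$ in $(C^{\frac{2+\alpha}{2},2+\alpha}([0,T_k]\times[0,1]))^3$, where $\mathcal{A}_{NL}^{(\lambda)}(v):=(V^i_v-\lambda\,\varphi^{\circ}(\nu^i_v)(D^2\varphi^{\circ}(\nu^i_v)\tau^i_v\cdot\tau^i_v)\kappa^i_v)_{i=1,2,3}$ is the operator $\mathcal{A}_{NL}$ from the proof of Lemma~\ref{LemmaSTEForH} with the curvature term scaled by $\lambda$ and $\mathcal{B}_{NL}$ is as in \eqref{EquationSystemForHBoundaryOperator}, and where the target of $\Psi$ is the corresponding product of parabolic H\"older spaces restricted to the subspace singled out by the first order compatibility conditions. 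Then $\Psi$ is affine in $\lambda$ and analytic in $v$, $\Psi(1,h)=0$, and $\partial_v\Psi(1,h)$ is the linearization of \eqref{EquationSystemForH} at $h$; for $\|h\|$ small — which holds by \eqref{EquationAssumptionBoundH} — this operator is a small perturbation, in the relevant operator norm, of the linearization at $h\equiv 0$, that is, of the problem \eqref{EquationSystemForHLinearized}, which is an isomorphism by the linear theory recalled in the proof of Lemma~\ref{LemmaSTEForH}; hence $\partial_v\Psi(1,h)$ is an isomorphism as well. The analytic implicit function theorem then yields an analytic curve $\lambda\mapsto v(\lambda)\in(C^{\frac{2+\alpha}{2},2+\alpha}([0,T_k]\times[0,1]))^3$ with $v(1)=h$ and $\Psi(\lambda,v(\lambda))=0$, and choosing $\varepsilon_k=\varepsilon_k(T_k)>0$ small enough that the uniqueness part of the fixed-point argument in the proof of Lemma~\ref{LemmaSTEForH} applies on $[0,T_k]$ forces $v(\lambda)=h_\lambda$ for $\lambda$ near $1$.

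Differentiating $\lambda\mapsto h_\lambda$ at $\lambda=1$ gives $t\,\partial_t h\in(C^{\frac{2+\alpha}{2},2+\alpha}([0,T_k]\times[0,1]))^3$, hence $\partial_t h(t,\cdot)\in(C^{2+\alpha}([0,1]))^3$ for every $t>0$. For such fixed $t$, \eqref{EquationParabolicVersionOfPDE} together with \eqref{EquationFormulaKappaH0} expresses $\partial_{xx}h(t,\cdot)$ through a pointwise-in-$x$ linear system whose coefficient matrix equals $\mathrm{diag}(|(\gamma^i_\ast)'|^2)$ at $h=0$ and is therefore invertible for $\|h\|$ small, and whose right-hand side is a smooth function of $\partial_t h(t,\cdot)$, $\partial_x h(t,\cdot)$ and $h(t,\cdot)$; since the first of these lies in $C^{2+\alpha}$ and the others in $C^{1+\alpha}$, we obtain $\partial_{xx}h(t,\cdot)\in(C^{1+\alpha}([0,1]))^3$, i.e.\ $h(t,\cdot)\in(C^{3+\alpha}([0,1]))^3$. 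For the quantitative bound, applying the Schauder estimates for \eqref{EquationSystemForHLinearized} along the implicit function theorem exactly as in the norm estimate in Lemma~\ref{LemmaSTEForH}, with all constants depending on $h$ only through the a priori bound $R(\alpha)$ of \eqref{EquationAssumptionBoundH}, gives $\sup_{|\lambda-1|<\delta'}\|h_\lambda\|_{(C^{\frac{2+\alpha}{2},2+\alpha}([0,T_k]\times[0,1]))^3}\le C(R(\alpha))\,(1+\|h_0\|_{(C^{2+\alpha}([0,1]))^3})$ for some $\delta'\in(0,\delta)$; a Cauchy estimate for the analytic curve $\lambda\mapsto h_\lambda$ bounds $\|t\,\partial_t h\|$ in the same space by the same quantity, and dividing by $t_k$ on $[t_k,T_k]$ and combining with the linear-algebra step above produces \eqref{EquationSmoothingForH}.

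The main difficulty will be twofold: first, verifying that $\partial_v\Psi(1,h)$ is an isomorphism onto the correctly compatibility-constrained target space, which forces a careful set-up of the parabolic H\"older spaces together with their first order compatibility conditions and a re-use of the Lopatinskii--Shapiro analysis from Lemma~\ref{LemmaSTEForH}; second, the bookkeeping showing that every constant produced by the implicit function theorem and by the Cauchy estimate depends on $h$ solely through $R(\alpha)$, so that the final estimate has the announced form and the smallness $\varepsilon_k$ — equivalently, the admissible range of $\lambda$ — is governed by $T_k$.
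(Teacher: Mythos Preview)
Your approach is correct and follows the same parameter-trick strategy as the paper. Two execution differences are worth noting. First, you apply the implicit function theorem at $(1,h)$ and treat $\partial_v\Psi(1,h)$ as a perturbation of the isomorphism at $h=0$; the paper instead makes $h_0$ an additional variable and applies the theorem at $(\lambda,h_0,h)=(1,0,0)$, where the linearization is exactly \eqref{EquationSystemForHLinearized}. This lets the paper obtain the $\|h_0\|$-dependence of the bound from the mean value theorem in the $h_0$-variable (using that the zero solution is stationary for every $\lambda$, so $h_{PT}(\lambda,0)\equiv 0$), whereas your Cauchy estimate produces a bound through $\sup_\lambda\|h_\lambda\|\le C(R(\alpha))$; both are acceptable here since all quantities are small. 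Second, for the compatibility issue you correctly flag, the paper's device is to compose the nonlinear map with a fixed continuous projection $P_{lin}$ onto the closed subspace $\mathbb{D}_{lin}$ of data compatible for the linearization at $h=0$: the projection is harmless on roots (since $P_{lin}(0)=0$) and on the derivative at the base point (whose range already lies in $\mathbb{D}_{lin}$), and it makes the target a fixed Banach space independent of $h$. This is exactly what your ``restriction to the compatibility-constrained subspace'' needs in order to be made rigorous, since that subspace would otherwise depend on the base point.
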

\begin{rem}\ 
\begin{itemize}
\item[(i)] The precise restriction  on the constant $R(\alpha)>0$  will become clear in Step 3 in the proof.
\item[(ii)]	As we will see in the proof of the lemma the constants $C(t_k, T_k)$ and $C(R(\alpha), C(t_k, T_k))$ blow up like $t_k^{-1}$ as $t_{k}$ approaches zero.
\end{itemize}
\end{rem}

	\begin{proof}
		The proof splits into three separate steps. The first one is of technical nature and is only  needed for the precise application of the parameter trick, which is carried out in the second step. As a consequence we derive higher regularity for the time derivative. Finally, this is used in the last step to also get higher space regularity. Before we start with the proof itself, we want to give some ideas for the parameter trick, point out  the problems that arise when working with non-linear boundary conditions and our workaround for these issues.
		
			\textit{Notation:} We will denote by $\mathcal{A}_{lin}$ the elliptic operator from $\eqref{EquationSystemForHLinearized}_1$, i.e., 
			\begin{align*}
				\mathcal{A}_{lin}h:=\left( \frac{1}{|\gamma_{\ast}'|^2}\varphi^{\circ}(\nu^i_{\ast})(D^2\varphi^{\circ}(\nu^i_{\ast})\tau^i_{\ast}\cdot\tau^i_{\ast})\partial_{xx}h^i-\varphi^{\circ}(\nu^i_{\ast})(D^2\varphi^{\circ}(\nu^i_{\ast})\tau^i_{\ast}\cdot\tau^i_{\ast})\langle \frac{(\gamma_{\ast}^{i})^{''}}{|(\gamma_{\ast}^{i})'|^{3}}, \tau_{\ast} \rangle h^{i}_{x}\right)_{i=1,2,3},
			\end{align*}
		 by $\mathcal{B}_{lin}$ the boundary operator due to $\eqref{EquationSystemForHLinearized}_2\eqref{EquationSystemForHLinearized}_5$, i.e.,
			\begin{align*}
				\mathcal{B}_{lin}h:=(h\big|_{x=0}, \sum_{i=1}^3\tilde{\alpha}^i_{\ast}h^i\big|_{x=1}, &(\partial_xh^2+\frac{\DZweiNuTauTau{3}{3}(\tau_{\ast}^3\cdot\nu_{\ast}^1)}{\DZweiNuTauTau{2}{2}(\tau_{\ast}^2\cdot\nu_{\ast}^1)}\partial_xh^3)\big|_{x=1},\\ &(\partial_xh^1+\frac{\DZweiNuTauTau{3}{3}(\tau_{\ast}^3\cdot\nu_{\ast}^2)}{\DZweiNuTauTau{1}{1}(\tau_{\ast}^1\cdot\nu_{\ast}^2)}\partial_xh^3)\big|_{x=1}),
			\end{align*}
			the right-hand side for a general linear parabolic differential operator by $\mathfrak{f}$, and the right-hand sides of the boundary conditions for a general differential system by $\mathfrak{b}=(\mathfrak{b}^1,\mathfrak{b}^2,\mathfrak{b}^3,\mathfrak{b}^4)\in X_{comp, T}$ with
			\begin{align*}
				X_{comp, T}:=(C^{\frac{2+\alpha}{2}}([0,T]))^3 \times C^{\frac{2+\alpha}{2}}([0,T])\times C^{\frac{1+\alpha}{2}}([0,T]) \times C^{\frac{1+\alpha}{2}}([0,T]) .
			\end{align*}
		
			In other words we will be considering the PDE-system:	
\begin{align}\label{EquationSystemForHLinearized23}
			\begin{cases}
				\partial_t h = \mathcal{A}_{lin}h + \mathfrak{f} & \text{on }[0,T]\times(0,1), \\
				\mathcal{B}_{lin}h =\mathfrak{b} &\forall t\in[0,T],\\ 
				h(0,x)=h_0(x) & \forall x\in [0,1].
			\end{cases}
		\end{align}
\textit{Preliminary considerations and main ideas:} Denote by $h_{h_0}$ the solution from Lemma \ref{LemmaSTEForH} for some initial data $h_0$. Additionally, for $\lambda\in (1-\delta, 1+\delta)$ for $\delta>0$ small enough, denote  by $h_{\lambda, h_0}$ the  time scaled version of $h_{h_0}$, i.e.,
			\begin{align*}
				h_{\lambda, h_0}(t,x):=h_{h_0}(\lambda t,x) \qquad t \in \left[0,\frac{T}{\lambda} \right],\,\, x \in [0,1].
			\end{align*}
	Using that $\partial_{t} h_{\lambda, h_0}(t,x)= \lambda \partial_{t} h_{h_0}(\lambda t,x)$ and $\partial_{x} h_{\lambda, h_0}(t,x)=  \partial_{x} h_{h_0}(\lambda t,x)$ 	one sees that $h_{\lambda, h_0}$ solves a time scaled version of \eqref{EquationSystemForH} with the same boundary and initial conditions, i.e. $h_{\lambda, h_0}$ solves the $\lambda$-problem $$ V_h-\lambda\varphi^{\circ}(\nu_h)(D^2\varphi^{\circ}(\nu_h)\tau_h\cdot\tau_h)$$
	together with initial and boundary conditions $h_{0}$, $\mathcal{B}_{NL}(h_{\lambda, h_0}) =0$ on the time interval $[0, \frac{T}{1+\delta} ]$. 
	 The idea is now to show smoothness in $\lambda$ of the map $h_{\lambda, h_0}$. This will then imply smoothness in $t$  (away from zero) of the original $h_{h_0}$,  since
	$$\partial_{\lambda} h_{\lambda, h_0}(t,x) = \partial_{\lambda}(h_{h_0}(\lambda t,x)) = t \partial_{t} h_{h_0}(\lambda t,x).$$

 The desired smoothness of  $ h_{\lambda, h_0}$ is obtained by application of
 the implicit functions theorem (cf. Theorem \ref{TheoremImplicitFunction}): a natural candidate for a suitable functional for this approach is given by
 \begin{align*}
 F: (1-\delta, 1+\delta) \times (C^{2+\alpha}([0,1]))^3\times (C^{\frac{2+\alpha}{2}, 2+\alpha}([0, \frac{T}{1+\delta}]\times[0,1]))^3 \to M 
 \end{align*}
 with
 $$ M \subset(C^{2+\alpha}([0,1]))^3\times (C^{\frac{\alpha}{2}, \alpha}([0, \frac{T}{1+\delta}]\times[0,1]))^3 \times
 X_{comp, \frac{T}{1+\delta}}
 $$
 and 
		\begin{align*}
			F: (\lambda, h_0, h)\mapsto \left(h\big|_{t=0}-h_0, V_h-\lambda\varphi^{\circ}(\nu_h)(D^2\varphi^{\circ}(\nu_h)\tau_h\cdot\tau_h)\kappa_{h}, \mathcal{B}_{NL}(h) \right).
		\end{align*} 
		where $\mathcal{B}_{NL}(h)$ is given by \eqref{EquationSystemForHBoundaryOperator}.
		Thus, $\mathcal{B}_{NL}$ is just the boundary operator of \eqref{EquationSystemForH}.
	Clearly, for any $\lambda, h_0$ small enough a root of this  map is given by $h_{\lambda, h_0}$, 
	 i.e. $F(\lambda, h_{0}, h_{\lambda, h_{0}})=0$ (recall Lemma~\ref{LemmaSTEForH}). In particular $F(1,0,0)=0$, i.e. the functional vanishes on the stationary configuration $\Gamma_{\ast}$ (which corresponds to $h=0$.)
	If we are able to apply Theorem \ref{TheoremImplicitFunction} (by showing the bijectivity of $\partial_3 F(1,0,0)$ and smoothness of $F$ around $(1,0,0)$), we get a unique solution $ (\lambda, h_{0}) \to h((\lambda, h_{0}))$(which then has to coincide with $h_{\lambda, h_0}$!) which is smooth in $\lambda$. The crucial part for this now is to check bijectivity of $\partial_3 F(1,0,0)$. This yields the map
	\begin{align*}
		h\mapsto (h\big|_{t=0}, \partial_t h-\mathcal{A}_{lin}h, \mathcal{B}_{lin}h).
	\end{align*}
This looks promising as the right-hand side is precisely the linear problem \eqref{EquationSystemForHLinearized} for which we already have a solution theory. The problem now is that the range of $\partial_3 F(1,0,0)$ will only contain (linear) compatible data while $F$ does not. So, to make things work we have to  force $F$ to take the right range. Our method for this is to project $F$ onto the right space. The construction of this projections is the subject of the next step.\\
		
		\textit{Step 1: } We consider the space $\mathbb{D}_{lin, \frac{T}{1+\delta}}$ of all triplets $(h_0, \mathfrak{f}, \mathfrak{b})$ of initial data, and right-hand sides in \eqref{EquationSystemForHLinearized23} fulfilling the necessary compatibility conditions for the system to be solvable.

Precisely, we have that
		\begin{align*}
			\mathbb{D}_{lin, \frac{T}{1+\delta}}&=\{(h_0,\mathfrak{f},\mathfrak{b})\in (C^{2+\alpha}([0,1]))^3\times C^{\frac{\alpha}{2}, \alpha}([0,\frac{T}{1+\delta}]\times[0,1])^3\times X_{comp, \frac{T}{1+\delta}} \, : \quad  \mathcal{B}_{lin}(h_0)=\mathfrak{b}\big|_{t=0},\\
			&\mathcal{A}_{lin}(h_0)\big|_{x=0}+\mathfrak{f}\big|_{x=0, t=0}=\partial_t \mathfrak{b}^1\big|_{t=0}, \quad\sum_{i=1}^3\tilde{\alpha}^i_{\ast}(\mathcal{A}^i_{lin}(h_0)\big|_{x=1}+\mathfrak{f}^i\big|_{x=1, t=0}) = \partial_t\mathfrak{b}^2\big|_{t=0} \}\\
			&=\ker(K_{\frac{T}{1+\delta}}),
		\end{align*}
	
where $K_{\frac{T}{1+\delta}}: (C^{2+\alpha}([0,1]))^3\times C^{\frac{\alpha}{2}, \alpha}([0,\frac{T}{1+\delta}]\times[0,1])^3\times X_{comp, \frac{T}{1+\delta}}\to\R^{10}$ is defined by
\begin{align*}
	K(h_0,\mathfrak{f},\mathfrak{b})=\begin{pmatrix}
		\mathcal{B}_{lin}(h_0)-\mathfrak{b}\big|_{t=0} \\ \mathcal{A}_{lin}(h_0)\big|_{x=0}+\mathfrak{f}\big|_{x=0, t=0}-\partial_t\mathfrak{b}^1\big|_{t=0} \\ 
		\sum_{i=1}^3\tilde{\alpha}^i_{\ast}(\mathcal{A}^i_{lin}(h_0)\big|_{x=1}+\mathfrak{f}^i\big|_{x=1, t=0}) - \partial_t\mathfrak{b}^2\big|_{t=0}
	\end{pmatrix}.
\end{align*} 
Note that $K_{\frac{T}{1+\delta}}$ is a linear operator and thus $\mathbb{D}_{lin, \frac{T}{1+\delta}}$ is a closed subspace. Now, to be able to find a continuous projection onto $\mathbb{D}_{lin}$ it is sufficient - see \cite[P. 34-36]{GrothendieckTopologicalVectorSpaces} - to find a closed complementary space $Z_{\frac{T}{1+\delta}}$ in $(C^{2+\alpha}([0,1]))^3\times C^{\frac{\alpha}{2}, \alpha}([0,\frac{T}{1+\delta}]\times[0,1])^3\times X_{comp, \frac{T}{1+\delta}}$,
i.e.
\begin{align}\label{EquationDirectSumCondition} Z_{\frac{T}{1+\delta}} \oplus \mathbb{D}_{lin, \frac{T}{1+\delta}} =(C^{2+\alpha}([0,1]))^3\times C^{\frac{\alpha}{2}, \alpha}([0,\frac{T}{1+\delta}]\times[0,1])^3\times X_{comp, \frac{T}{1+\delta}}.
\end{align}
 To find such a $Z_{\frac{T}{1+\delta}}$ let $d:=\dim(\im(K))$ and 
 $$e_1,..., e_d\in (C^{2+\alpha}([0,1]))^3\times C^{\frac{\alpha}{2}, \alpha}([0,\frac{T}{1+\delta}]\times[0,1])^3\times X_{comp, \frac{T}{1+\delta}}$$
  be maps such that $\langle K_{\frac{T}{1+\delta}}(e_1),...,K_{\frac{T}{1+\delta}}(e_d)\rangle=\im(K_{\frac{T}{1+\delta}})$. Then set 

  $$Z_{\frac{T}{1+\delta}}:=span \{ e_1,...,e_d\} \subset (C^{2+\alpha}([0,1]))^3\times C^{\frac{\alpha}{2}, \alpha}([0,\frac{T}{1+\delta}]\times[0,1])^3\times X_{comp, \frac{T}{1+\delta}}.$$
  
   As $\dim(Z_{\frac{T}{1+\delta}})\le 10$, we see that $Z_{\frac{T}{1+\delta}}$ is a closed subspace. Additionally, we have that $Z_{\frac{T}{1+\delta}}\cap \mathbb{D}_{lin, \frac{T}{1+\delta}}=\{0\}$ by construction of $Z_{\frac{T}{1+\delta}}$. Finally, let $v\in (C^{2+\alpha}([0,1]))^3\times C^{\frac{\alpha}{2}, \alpha}([0,\frac{T}{1+\delta}]\times[0,1])^3\times X_{comp, \frac{T}{1+\delta}}$. Then there exists a unique $\tilde{v}\in Z_{\frac{T}{1+\delta}}$ such that $K_{\frac{T}{1+\delta}}(v)=K_{\frac{T}{1+\delta}}(\tilde{v})$. Furthermore, $K_{\frac{T}{1+\delta}}(v-\tilde{v})=0$, thus $v-\tilde{v}\in \mathbb{D}_{lin, \frac{T}{1+\delta}}$. This shows that with this choice of $Z_{\frac{T}{1+\delta}}$ condition \eqref{EquationDirectSumCondition} is fulfilled.  
 
 Consequently, we have a continuous projection $P_{lin, \frac{T}{1+\delta}}$ onto $\mathbb{D}_{lin, \frac{T}{1+\delta}}$:

 $$ P_{lin, \frac{T}{1+\delta}}: (C^{2+\alpha}([0,1]))^3\times C^{\frac{\alpha}{2}, \alpha}([0,\frac{T}{1+\delta}]\times[0,1])^3\times X_{comp, \frac{T}{1+\delta}} \to \mathbb{D}_{lin, \frac{T}{1+\delta}}.$$

		\textit{Step 2:} We are now able to perform the parameter trick itself. For this we consider for $\delta$ small enough the mapping
		\begin{align*}
			F: (1-\delta, 1+\delta)&\times (C^{2+\alpha}([0,1]))^3\times (C^{\frac{2+\alpha}{2}, 2+\alpha}([0,\frac{T}{1+\delta}]\times[0,1]))^3\to \mathbb{D}_{lin, \frac{T}{1+\delta}},\\
			(\lambda, h_0, h)&\mapsto P_{lin, \frac{T}{1+\delta}}((h\big|_{t=0}-h_0, V_h-\lambda\varphi^{\circ}(\nu_h)(D^2\varphi^{\circ}(\nu_h)\tau_h\cdot\tau_h)\kappa_h ,\mathcal{B}_{NL}(h))).
		\end{align*}

		On this mapping, which is analytic due to similar arguments as performed in Section \ref{SectionParaAndLoja}, we apply now the implicit function theorem. For $\partial_3F(1,0,0)$ we get that
		\begin{align*}
			\partial_3F(1,0,0)h=(h\big|_{t=0}, \mathcal{A}_{lin}h, \mathcal{B}_{lin}h).
		\end{align*}
	Note that the projection $P_{lin, \frac{T}{1+\delta}}$ does not appear as by the essence of compatibility conditions we already have that $h\big|_{t=0}$ is compatible with the inhomogeneities $\mathcal{A}_{lin}h$ and $\mathcal{B}_{lin}h$. Now the invertibility of $\partial_3F(1,0,0)$ follows directly from the linear existence theory discussed in the proof of Lemma \ref{LemmaSTEForH} and the choice of $\mathbb{D}_{lin, \frac{T}{1+\delta}}$. Therefore, $\partial_3F(1,0,0)$ is bijective. So we can apply the implicit functions theorem (see Theorem \ref{TheoremImplicitFunction}) to get a unique solution $h_{PT}(\lambda, h_0)$ of
		\begin{align}\label{EquationRootProblem}
			F(\lambda, h_0, h)=0,
		\end{align}
		which is smooth in $\lambda$ and $h_0$ for $\lambda$ sufficiently close to one and $\|h_{0}\|_{C^{2+\alpha}([0,1])^{3}} < \varepsilon_{k}$. On the other hand, we easily see that the scaled solution $$h_{\lambda, h_0}:=h(\lambda\cdot,\cdot)$$ with $h$ as in the prerequisites of the lemma is also a solution of \eqref{EquationRootProblem}. Note that the projection $P_{lin,\frac{T}{1+\delta}}$ is not a problem here as 
	\begin{align*}
		F(\lambda, h_0, h_{\lambda, h_0})= P_{lin, \frac{T}{1+\delta}}((0,0,0))=(0,0,0).
\end{align*}  
By uniqueness of the roots of $F$ we conclude that 
		\begin{align*} 	
		h_{PT}(\lambda, h_0) \equiv h_{\lambda, h_0}.
		\end{align*}	
		In particular, we see that $h_{\lambda,h_0}$ is smooth as function in $\lambda$ and that 
		\begin{align*}
			\partial_th(\lambda t,x)=\frac{1}{t}\partial_{\lambda}h_{PT}(\lambda,h_0)(x,t).
		\end{align*} 
		Consequently, we see that $\partial_th\in (C^{\frac{2+\alpha}{2}, 2+\alpha}([t_k,\frac{T}{1+\delta}]\times[0,1]))^3$ for all $t_k\in (0,\frac{T}{1+\delta})$. 
		Furthermore, using that $h_{PT}(\lambda,0)=0$ for any $\lambda$,
		for $h_0$ small enough we have that
	    \begin{align*}
	    \|\partial_{\lambda}h_{PT}(1,h_0)\|_{(C^{\frac{2+\alpha}{2}, 2+\alpha}([0, \frac{T}{1+\delta}]\times[0,1]))^3}&\le C\int_0^1\|\partial_2\partial_{\lambda}h_{PT}(1,sh_0)h_0\|_{(C^{\frac{2+\alpha}{2}, 2+\alpha}([0, \frac{T}{1+\delta}]\times[0,1]))^3}ds\\
	    &\le C\int_0^1\|h_0\|_{(C^{2+\alpha}([0,1]))^3}ds\le C\|h_0\|_{(C^{2+\alpha}([0,1]))^3}.
	    \end{align*}
        Hereby, we used analyticity of $h_{PT}$ in both $\lambda$ and $h_0$ in a small neighborhood of $(1,0)$.
         This shows that
		\begin{align}\label{EquationBoundForPartialTH}
			\|\partial_t h\|_{(C^{\frac{2+\alpha}{2}, 2+\alpha}([t_k, \frac{T}{1+\delta}], [0,1]))^3} \le C(t_k, \frac{T}{1+\delta})\|h_0\|_{(C^{2+\alpha}([0,1]))^3}.
		\end{align}
	
		\textit{Step 3:} The higher space regularity of $\partial_t h$  gives us the higher space regularity of $h$ as we can use $\eqref{EquationSystemForH}_1$ and \eqref{EquationFormulaKappaH0} to see that
		\begin{align*}
		\kappa_{h}^{i} &=\frac{(Ah_{xx})^{i} -h^{i}_{x} \langle (\gamma^{i}_{\ast})'', \tau^{i}_{\ast}\rangle}{[(h^{i}_{x})^{2} + (|(\gamma_{\ast}^{i})'| + \mu^{i}(h_{x}))^{2}]^{3/2}} \qquad \qquad \text{and }\\
		\kappa_{h}^{i}&=\frac{(F_{h} \partial_{t} h)^{i}}{\varphi^{\circ}(\nu_{h}^{i})(D^2(\varphi^{\circ})(\nu_h^{i})\tau_h^{i}\cdot\tau_h^{i})},
		\end{align*}
		with
\begin{align*}
	A:=\mathrm{diag}\left(|(\gamma^1_{\ast})'|+\mu^1(h_x), |(\gamma^2_{\ast})'|+\mu^2(h_x), |(\gamma^3_{\ast})'|+\mu^3(h_x) \right)-\mathrm{diag}(h^1_x, h^2_x, h^3_x)\mathcal{I}.
\end{align*}
	Note that for $h\equiv 0$ we have that $A=\mathrm{diag}(|(\gamma_{\ast}^1)'|, |(\gamma_{\ast}^2)'|, |(\gamma_{\ast}^3)'|)$. This is clearly invertible at every point. Consequently, $A$ is invertible as long as $h_x$ is small enough. This can be guaranteed for $R(\alpha)$ small enough. Additionally, all entries of $A$ are bounded in $C^{1+\alpha}$ due to \eqref{EquationAssumptionBoundH}. The same holds then for the inverse of $A$ due to Cramer's rule. 
With this and the the higher space regularity of $\partial_t h$ we conclude that $h_{xx}(t, \cdot)\in C^{1+\alpha}$ away from $t=0$. Finally, \eqref{EquationSmoothingForH} follows from \eqref{EquationBoundForPartialTH} and the bound \eqref{EquationAssumptionBoundH}.
\end{proof}
It remains to show the continuous dependency of our solution on the initial data.
To that end we use similar ideas as in the previous lemma.
\begin{lemma}[Continuous Dependency on initial data]\label{LemmaContinuousDependency}\ \\
		Let $\Gamma_{\ast}^i, \gamma_{\ast}^i, \tilde{\alpha}_{\ast}^i, i=1,2,3$ fulfil Assumption \ref{RemarkAssumstionsGammaast}, $\alpha\in(0,1)$. Moreover for  initial data $h_{0} \in C^{2+\alpha}$ satisfying
	the compatibility conditions \eqref{EquationAnalyticCompatibilityCondition}, let $h\in (C^{\frac{2+\alpha}{2},2+\alpha}([0,T]\times[0,1]))^3$ solve \eqref{EquationSystemForH} for some $T>0$. Then, there exists an $\varepsilon_k>0$ such that the mapping
	\begin{align*}
		(C^{2+\alpha}([0,1]))^3\supset B_{\varepsilon_k}(0)&\to\R \\
		h_0&\mapsto \|h\|_{(C^{\frac{2+\alpha}{2},2+\alpha}([0,T]\times[0,1]))^3}
	\end{align*}
is continuous.
\end{lemma}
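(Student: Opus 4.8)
The plan is to show that the solution operator $h_{0}\mapsto h$ associated with \eqref{EquationSystemForH} is continuous (in fact analytic) as a map into $(C^{\frac{2+\alpha}{2},2+\alpha}([0,T]\times[0,1]))^{3}$, and then to compose it with the norm on that space, which is $1$‑Lipschitz. First I would invoke Lemma~\ref{LemmaSTEForH}: after possibly shrinking $\varepsilon_{k}$ below $\varepsilon_{STE}(\alpha)$ and replacing $T$ by $\min\{T,T_{STE}(\alpha)\}$, for every $h_{0}\in B_{\varepsilon_{k}}(0)$ satisfying the compatibility conditions \eqref{EquationAnalyticCompatibilityCondition} there is a unique solution $h=h(h_{0})$ of \eqref{EquationSystemForH} on $[0,T]\times[0,1]$, obtained as the fixed point of the contraction $\Lambda_{h_{0}}$ built from the linear problem \eqref{EquationSystemForFixedPointArgument}, and satisfying the uniform bound $\|h(h_{0})\|_{(C^{\frac{2+\alpha}{2},2+\alpha})^{3}}\le R_{STE}(\alpha)$.

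The core step is the continuity of $h_{0}\mapsto h(h_{0})$, for which I would reuse the parameter‑trick machinery of Lemma~\ref{LemmaParabolicSmoothigH} with the scaling parameter frozen at $\lambda=1$. Let $P_{lin,T}$ be the continuous projection onto the space $\mathbb{D}_{lin,T}$ of linearly compatible data constructed in Step~1 of that proof, and define
\begin{align*}
	F\colon (C^{2+\alpha}([0,1]))^{3}\times (C^{\frac{2+\alpha}{2},2+\alpha}([0,T]\times[0,1]))^{3}&\to \mathbb{D}_{lin,T}, \\
	(h_{0},h)&\mapsto P_{lin,T}\big(\,(h|_{t=0}-h_{0},\ \mathcal{A}_{NL}(h),\ \mathcal{B}_{NL}(h))\,\big),
\end{align*}
where $\mathcal{A}_{NL}$, $\mathcal{B}_{NL}$ are the differential and boundary operators of \eqref{EquationSystemForH}. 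Exactly as in Lemma~\ref{LemmaParabolicSmoothigH}, $F$ is analytic near $(0,0)$, one has $F(0,0)=0$, and the partial derivative
\[
	\partial_{2}F(0,0)\,h=(h|_{t=0},\ \mathcal{A}_{lin}h,\ \mathcal{B}_{lin}h)
\]
is bijective onto $\mathbb{D}_{lin,T}$ by the linear existence theory of \cite{SolonnikovExistenceTheory} together with the definition of $\mathbb{D}_{lin,T}$ (the projection is invisible in $\partial_{2}F(0,0)$, since $h|_{t=0}$ is automatically compatible with the inhomogeneities $\mathcal{A}_{lin}h$, $\mathcal{B}_{lin}h$). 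Hence Theorem~\ref{TheoremImplicitFunction} provides $\varepsilon_{k}>0$ and a unique analytic map $h_{0}\mapsto h(h_{0})$ solving $F(h_{0},h)=0$ for $\|h_{0}\|_{(C^{2+\alpha})^{3}}<\varepsilon_{k}$; since the solution of \eqref{EquationSystemForH} from Lemma~\ref{LemmaSTEForH} has compatible data it is a root of $F$, and by uniqueness it coincides with $h(h_{0})$. In particular $h_{0}\mapsto h(h_{0})$ is continuous from $B_{\varepsilon_{k}}(0)$ into $(C^{\frac{2+\alpha}{2},2+\alpha}([0,T]\times[0,1]))^{3}$.

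Finally, since $\|\cdot\|_{(C^{\frac{2+\alpha}{2},2+\alpha}([0,T]\times[0,1]))^{3}}$ is $1$‑Lipschitz, the composition $h_{0}\mapsto\|h(h_{0})\|$ is continuous, which is the claim. An alternative avoiding the implicit function theorem argues directly with the contraction from Lemma~\ref{LemmaSTEForH}: $\Lambda_{h_{0}}$ is uniformly a $\tfrac{1}{2}$‑contraction, the linear solution operator of \eqref{EquationSystemForFixedPointArgument} depends boundedly and linearly on the initial datum by the Schauder estimates, and $\tilde h\mapsto\mathfrak{f}(\tilde h)$, $\tilde h\mapsto\mathfrak{b}(\tilde h)$ are Lipschitz on bounded sets; then the standard fixed‑point estimate $\|h(h_{0})-h(\tilde h_{0})\|\le 2\,\|\Lambda_{h_{0}}(h(\tilde h_{0}))-\Lambda_{\tilde h_{0}}(h(\tilde h_{0}))\|\le C\,\|h_{0}-\tilde h_{0}\|_{(C^{2+\alpha})^{3}}$ gives local Lipschitz continuity of the solution operator. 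I expect the main obstacle to be purely bookkeeping: identifying the correct target space $\mathbb{D}_{lin,T}$ so that $F$ genuinely maps into it and verifying bijectivity of $\partial_{2}F(0,0)$ — both already done in the proof of Lemma~\ref{LemmaParabolicSmoothigH}, so here it only remains to repeat the construction with $\lambda$ suppressed and to keep the existence time $T$ fixed for all small $h_{0}$, which follows from the uniformity in Lemma~\ref{LemmaSTEForH}.
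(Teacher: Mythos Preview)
Your proposal is correct and follows essentially the same approach as the paper: the paper's proof simply says to copy Step~2 of Lemma~\ref{LemmaParabolicSmoothigH} with the scaling parameter fixed at $\lambda=1$, so that the time interval need not be shrunk, and to conclude via Theorem~\ref{TheoremImplicitFunction}. Your write-up is a faithful (and more detailed) execution of exactly this argument, including the identification of the target space $\mathbb{D}_{lin,T}$, the bijectivity of $\partial_{2}F(0,0)$, and the matching of the implicit-function root with the fixed-point solution from Lemma~\ref{LemmaSTEForH}; the alternative contraction-based argument you sketch is a legitimate extra route but not used in the paper.
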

\begin{proof}
	We basically copy Step 2 from Lemma \ref{LemmaParabolicSmoothigH} but without the $\lambda$-parameter in the definition of $F$ (in other words we choose $\lambda \equiv 1$). Thus, we do not have to modify the time interval and get the sought result from another application of Theorem \ref{TheoremImplicitFunction}.
\end{proof}

\begin{rem}[On the condition \eqref{EquationAssumptionBoundH}]\label{RemarkOnAssumptionBoundH}\ \\
   As a consequence of Lemma \ref{LemmaContinuousDependency} we see that the smallness of the bound \eqref{EquationAssumptionBoundH} can indeed guaranteed for a possible smaller choice of $\varepsilon_k$. In the following, we will assume that $\varepsilon_{k}$ is chosen in such a way.
\end{rem}

\section{Stability analysis}\label{SectionStabAna}

The version of the LSI we proved in Section \ref{SectionLoja}  is in itself  not very useful for the stability analysis. The reason for this is that Theorem \ref{TheoremLSIanalytic} does not consider the geometrical structure of our flow. But the argument we want to do now relies strongly on the geometric gradient flow we have. Therefore, we will modify Theorem \ref{TheoremLSIanalytic} for solutions of \eqref{EquationGeometricProblem}. Before we do so we want to specify resp. recall some notations for this section. We will denote by $u(t,x)\in (C^{\frac{2+\alpha}{2},2+\alpha}([0,T]\times[0,1],\R^2))^3$ any geometric solution of \eqref{EquationGeometricProblem} with initial values $u_0(x)\in (C^{2+\alpha}([0,1],\R^2))^3$. The corresponding normal parametrization when Lemma \ref{LemmaExistenceRefFramGraphPara} is applicable will be denoted by $h(t,x)$.
 
 The corresponding curves at time $t$ will be denoted by $\Gamma^i(t)$ and the whole triple network by $\Gamma(t)$. Geometric quantities on $\Gamma(t)$ will be denoted with an index $\Gamma(t)$: for example, $\nu_{\Gamma(t)}$ will denote the unit normal on $\Gamma(t)$. By  abuse of notation we will use the same notation for the quantities as functions on $\Gamma(t)$ and in local coordinates as functions on $[0,1]^3$. 
\begin{cor}[\L ojasiewicz-Simon gradient inequality for $E$ - Geometric Version]\label{TheoremLSIgeometric}\ \\
Let $\Gamma_{\ast}$, $\gamma_{\ast}$, be as in Assumption~\ref{RemarkAssumstionsGammaast}. 
		Furthermore let $T>0$, $\alpha\in(0,1)$, $$u \in (C^{\frac{2+\alpha}{2},2+\alpha}([0,T]\times[0,1],\R^2))^3$$ be a solution of \eqref{EquationGeometricProblem} in the sense of Definition \ref{def-geomflow} with respect to some initial data $u_0\in (C^{2+\alpha}([0,1], \R^2))^3$ fulfilling \eqref{EquationCompatibilityConditions}, and let $\Gamma(t)$ be the corresponding evolving geometry. 
		
		Then, there exist  $\delta_{LSI}, C_{LSI}>0$, and $\theta \in (0, 1/2]$ such that for all $t\in[0,T]$ with 
		\begin{align} \label{EquationConditionUAnalyticLSI}
			\|u(t,\cdot)-\gamma_{\ast}\|_{C^{2+\alpha}}\le \delta_{LSI}
	    \end{align}
we have that
		\begin{align}\label{EquationLSIgeometric}
				|E(\Gamma(t))-E(\Gamma_{\ast})|^{1-\theta}\le C_{LSI} \left(\sum_{i=1}^3\int_0^1 ((D^2\varphi^{\circ}(\nu_{\Gamma(t)}^i)\tau_{\Gamma(t)}^i\cdot\tau_{\Gamma(t)}^i)\kappa_{\Gamma(t)}^i)^2ds_{\Gamma(t)}^i\right)^{1/2}.
		\end{align}
\end{cor}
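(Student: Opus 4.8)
The plan is to transfer the analytic \L ojasiewicz--Simon inequality of Theorem~\ref{TheoremLSIanalytic} to the geometric setting by (i) converting the hypothesis \eqref{EquationConditionUAnalyticLSI} into the smallness of the height function $h$ in the $V$-norm, (ii) showing that $E(\Gamma(t))=E(h(t,\cdot))$ and that $E(\Gamma_{\ast})=E(0)$, and (iii) bounding $\|\mathcal{M}(h(t,\cdot))\|_W$ from above by the geometric $L^2$-norm of the anisotropic curvature appearing on the right of \eqref{EquationLSIgeometric}. First I would invoke Lemma~\ref{LemmaExistenceRefFramGraphPara}: by \eqref{EquationReparametrizationToNormalGraphSmallnesH}, for any prescribed $\delta>0$ there is $\tilde\sigma>0$ such that if $\sum_i\|u^i(t,\cdot)-\gamma_{\ast}^i\|_{C^{2+\alpha}}\le\tilde\sigma$ then (up to reparametrization, which does not change the image $\Gamma(t)$ and hence neither $E$ nor the curvature integral) we may write $\Gamma^i(t)=\Gamma^i_{h(t,\cdot)}$ with $\sum_i\|h^i(t,\cdot)\|_{C^{2+\alpha}}\le\delta$; choosing $\delta$ below $\sigma_{LSI}$ (and using $\|h\|_V\lesssim\|h\|_{C^{2+\alpha}}$) we set $\delta_{LSI}:=\tilde\sigma$. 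Note that $h(t,\cdot)\in V$ because the boundary conditions $\eqref{EquationGeometricProblem}_2$--$\eqref{EquationGeometricProblem}_3$ force $h^i(t,0)=0$ and $\sum_i\tilde\alpha^i_{\ast}h^i(t,1)=0$ via Lemma~\ref{LemmaRelationNormalTangentialPart}.

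Next, by definition $E(h(t,\cdot))=E(\Gamma_{h(t,\cdot)})=E(\Gamma(t))$ and $E(0)=E(\Gamma_{\ast})$, so Theorem~\ref{TheoremLSIanalytic} gives directly
\begin{align*}
|E(\Gamma(t))-E(\Gamma_{\ast})|^{1-\theta}\le C_{LSI}\,\|\mathcal{M}(h(t,\cdot))\|_W.
\end{align*}
It remains to estimate $\|\mathcal{M}(h(t,\cdot))\|_W$. Here I would use the explicit formula \eqref{EquationGradientOfE} from Lemma~\ref{LemmaFirstVariE}: the $u^i$-components are, up to the bounded (since $h$ is $C^{2+\alpha}$-small) factors $J_h^i$, $\tfrac{c^i}{s^{i+1}}$ and the bounded scalar products $\nu_h^i\cdot\nu_{\ast}^i$, $\nu_h^i\cdot\tau_{\ast}^i$, linear combinations of the three quantities $q^i:=(D^2\varphi^{\circ}(\nu_{h}^i)\tau_{h}^i\cdot\tau_{h}^i)\kappa_{h}^i$, $i=1,2,3$; and the $a^1,a^2$-components are, again up to bounded factors, linear combinations of $D\varphi^{\circ}(\nu_h^i(1))\cdot(\cdot)$, which are themselves pointwise bounded. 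Crucially $a^1,a^2$ involve the \emph{residual} of the natural boundary condition $\eqref{EquationGeometricProblem}_4$ (they are built from $D\varphi^{\circ}(\nu_{h_0}^i)\cdot\tau_{\ast}^i$-type terms that vanish precisely when $\sum_i D\varphi^{\circ}(\nu^i)=0$); since $u$ solves \eqref{EquationGeometricProblem}, condition $\eqref{EquationGeometricProblem}_4$ holds for all $t$, so one should check that $a^1=a^2=0$ along the flow, leaving only the $L^2$-part. Then
\begin{align*}
\|\mathcal{M}(h(t,\cdot))\|_W^2=\sum_{i=1}^3\int_0^1 (u^i)^2\,dx \le C\sum_{i=1}^3\int_0^1 (q^i)^2\,J_h^i\,dx = C\sum_{i=1}^3\int_0^1 (q^i)^2\,ds_h^i,
\end{align*}
and since $\|ds_h^i-ds_{\Gamma^i(t)}^i\|=0$ under reparametrization-invariance and $q^i$ is exactly $(D^2\varphi^{\circ}(\nu_{\Gamma(t)}^i)\tau_{\Gamma(t)}^i\cdot\tau_{\Gamma(t)}^i)\kappa_{\Gamma(t)}^i$, the right-hand side is $C$ times the square of the quantity in \eqref{EquationLSIgeometric}. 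Combining with the displayed inequality and absorbing constants into $C_{LSI}$ yields \eqref{EquationLSIgeometric}.

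The main obstacle I anticipate is the bookkeeping for the boundary terms $a^1,a^2$: one must verify carefully, using that $\eqref{EquationGeometricProblem}_4$ is preserved in time and the algebraic identities relating the $\tilde\alpha^i_{\ast}$, the angles $\theta^i_{\ast}$ and the matrix $\mathcal{I}$ (as in the proof of Lemma~\ref{LemmaFirstVariE}), that these terms vanish — or at worst are controlled by the $L^2$-curvature norm — so that they do not spoil the estimate; a secondary technical point is justifying that passing to the reparametrized graph representative changes neither side of \eqref{EquationLSIgeometric}, which follows from the geometric (parametrization-independent) nature of $E$, of the arclength integral, and of the anisotropic curvature, together with $\eqref{EquationGeometricProblem}_5$ and Remark~\ref{RemarkMotivationSolution}iv.).
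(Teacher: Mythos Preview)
Your proposal is correct and follows essentially the same route as the paper: reparametrize via Lemma~\ref{LemmaExistenceRefFramGraphPara} to get $h$ small in $V$, apply Theorem~\ref{TheoremLSIanalytic}, and then estimate $\|\mathcal{M}(h)\|_W$ using the explicit gradient formula. Your anticipated ``obstacle'' with $a^1,a^2$ is in fact immediate: since $\eqref{EquationGeometricProblem}_4$ holds and the vectors $h_1^i(1)\nu_{\ast}^i+\mu^i(h_1)(1)\tau_{\ast}^i$ all coincide (by the triple-junction condition encoded in $V$ and \eqref{EqRelationNormalTangentialPart}), the boundary sum in \eqref{EquFirstVariationE} vanishes for every $h_1\in V$, forcing $a^1=a^2=0$; the paper argues exactly this way, and the remaining step---comparing the arclength elements $ds^i_h$ across the three curves to shift the cross-terms---is handled by the uniform equivalence $J^i_h\in[\tfrac12,\tfrac32]J^i_{\ast}$ for $h$ small, which you implicitly use.
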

\begin{proof}
	Due to Lemma \ref{LemmaExistenceRefFramGraphPara} condition \eqref{EquationConditionUAnalyticLSI} guarantees for $\delta_{LSI}$ small enough that we can find $h\in (C^{2+\alpha}([0,1]))^3$ parametrizing $u$ in the sense of \eqref{EquationHReparametrizingGamma}. Also, we have that $h\in V$ due to Lemma~\ref{LemmaRelationNormalTangentialPart} (whereby $V$ is as in \eqref{EquationLSISettingV}). Choosing $\delta_{LSI}$ possibly smaller we can use \eqref{EquationReparametrizationToNormalGraphSmallnesH} to guarantee that $\|h\|_V\le \sigma_{LSI}$ as in Theorem \ref{TheoremLSIanalytic}. Therefore we can apply Theorem \ref{TheoremLSIanalytic} to see that \eqref{EquationLSIanalytic} holds. Now it remains to rewrite  \eqref{EquationLSIanalytic} in the way we proposed.\\
	We first note hat $\gamma_h$ (i.e. $u$ reparametrized over the reference frame $\gamma_{\ast}$) fulfills the anisotropic angle condition $\eqref{EquationGeometricProblem}_4$ and therefore the terms at $x=1$ in \eqref{EquFirstVariationE} vanish (recall \eqref{EquationHelpEquationNormalGraphs} ). Consequently, $a^1$ and $a^2$ in the formula \eqref{EquationGradientOfE} for $\mathcal{M}$ are zero. Thus the right-hand side in \eqref{EquationLSIanalytic} only consists of the $L^2$-norms of the $u^i$. Studying the definition of the $u^i$ in \eqref{EquationGradientOfE} we observe that the first summand in each $u^i$ is already almost what we want. We only have an additional $(\nu^i_h\cdot\nu^i_{\ast})$-term. The other two summands have additional $(\nu^j_h\cdot\tau_{\ast}^j)$-terms and constant factors consisting of the $c^i$ and $s^i$. Also, the two additional factors are on the ``wrong'' curves. We will take care of these three problems one by one.

The first two problems are easy to handle as we have pointwise for $i, j=1,2,3$ that
	\begin{align*}
		|\nu^i_h\cdot\nu^i_{\ast}|+|\nu^i_h\cdot\tau_{\ast}^j|\le 2.
	\end{align*}
    Especially both factors  are controlled by a constant, which we can include into $C_{LSI}$.

	Finally, we observe that the metric tensors are uniformly bounded by each other, again assuming that $\sigma_{LSI}$ is small enough and therefore  $J^i_h(x)\in[\frac{1}{2},\frac{3}{2}]J^i_{\ast}(x)$ for all $x\in[0,1], i=1,2,3$. Thus we basically can shift the second and third summand in the $u^i$ to the right curve by changing the metric tensor.\\
	In total this shows that with $C_{LSI}, \sigma_{LSI}$ suitable adapted we can conclude \eqref{EquationLSIgeometric} from \eqref{EquationLSIanalytic}.
\end{proof}

Finally, we are able to prove the main result of this article.
\begin{teo}[Stability of the anisotropic curve shortening  flow on networks]\label{TheoremStabilityAnMCF}\ \\
	Let $\Gamma_{\ast}^i, \gamma_{\ast}^i, \tilde{\alpha}_{\ast}^i, i=1,2,3$ fulfill Assumption \ref{RemarkAssumstionsGammaast}. Then for every $\alpha\in(0,1)$ there exists $\delta(\alpha)>0$ such that for all geometrically admissible networks $\Gamma_0$ with a parametrization $u_0$ such that \begin{align}\label{EquationConvergenceCondition}
		\|u_0-\gamma_{\ast}\|_{C^{2+\alpha}}\le\delta(\alpha),
	\end{align}
	 the anisotropic curve shortening flow \eqref{EquationGeometricProblem} has a geometric solution on $[0,\infty)$ in the sense of Definition~\ref{def-geomflow}. 
	The flow may either collapse in finite time or evolve in infinite time to some stationary solution for $E$ with the same energy as $E(\Gamma_{\ast})$ (but with the stationary solution being possibly different from $\Gamma_{\ast}$). \\
	More precisely, the flow converges (written in terms of the normal graph formulation as in  \eqref{EquationNormalGraphs}) 
	with respect to the $C^{2+\alpha}$-norm and the limit is also a local minimum of $E$ in the sense of Assumption~\ref{RemarkAssumstionsGammaast} with the same anisotropic length energy as $\Gamma_{\ast}$.
\end{teo}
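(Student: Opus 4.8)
The plan is to run the classical \L ojasiewicz--Simon scheme for gradient flows, using Corollary~\ref{TheoremLSIgeometric} as the key inequality, the dissipation identity of the anisotropic flow, and the analytic facts about the height function from Section~\ref{SectionAnalysisH} (Lemmas~\ref{LemmaSTEForH}, \ref{LemmaParabolicSmoothigH}, \ref{LemmaContinuousDependency}), inside a continuation argument. Fix $\varepsilon>0$ smaller than $\delta_{LSI}$ from Corollary~\ref{TheoremLSIgeometric}, smaller than $\sigma(\Gamma_{\ast},\alpha)$ from Lemma~\ref{LemmaExistenceRefFramGraphPara}, and so small that $\|u-\gamma_{\ast}\|_{C^{2+\alpha}}\le\varepsilon$ forces $H^2$-closeness to $\gamma_{\ast}$ and a uniform lower bound on the three curve lengths. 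By Theorem~\ref{TheoremSTE} the geometric flow starting from $u_0$ exists on a maximal interval $[0,T_{max})$; let $T^{\ast}\in(0,T_{max}]$ be the largest time such that $\|u(s,\cdot)-\gamma_{\ast}\|_{C^{2+\alpha}}\le\varepsilon$ for all $s\le t$, which is positive by \eqref{EquationConvergenceCondition} if $\delta(\alpha)$ is small. On $[0,T^{\ast})$, Lemma~\ref{LemmaExistenceRefFramGraphPara} writes $\Gamma(t)=\Gamma_{h(t)}$ over $\Gamma_{\ast}$ with $h(t)\in V$ solving \eqref{EquationSystemForH}, and \eqref{EquationReparametrizationToNormalGraphSmallnesH} makes $\|h(0)\|_{C^{2+\alpha}}$ (and the deviation of the reparametrizations from $\mathrm{Id}$) as small as we wish.

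The heart of the matter is the Simon ODE estimate. Plugging the normal velocity $V^i=\varphi^{\circ}(\nu^i)(D^2\varphi^{\circ}(\nu^i)\tau^i\cdot\tau^i)\kappa^i$ of $\eqref{EquationGeometricProblem}_1$ into the first variation of $E$ (Lemma~\ref{LemmaFirstVariE}), whose boundary terms vanish since $\eqref{EquationGeometricProblem}_4$ holds along the flow, gives, with $\kappa_{\varphi}^i:=(D^2\varphi^{\circ}(\nu^i)\tau^i\cdot\tau^i)\kappa^i$,
\begin{align*}
\frac{d}{dt}E(\Gamma(t))=-\sum_{i=1}^3\int_0^1\varphi^{\circ}(\nu^i_{\Gamma(t)})(\kappa^i_{\varphi,\Gamma(t)})^2\,ds^i_{\Gamma(t)}\le -c\,\|\kappa_{\varphi,\Gamma(t)}\|_{L^2}^2,
\end{align*}
since $\varphi^{\circ}$ is bounded below on the unit circle. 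As $\Gamma_{\ast}$ is a local minimum and $\Gamma(t)$ stays $H^2$-close, $\mathcal{E}(t):=E(\Gamma(t))-E(\Gamma_{\ast})\ge 0$, and it is small at $t=0$ by continuity of $E$. If $\mathcal{E}(t_0)=0$ for some $t_0<T^{\ast}$ the flow is stationary from then on; otherwise $\mathcal{E}>0$ and Corollary~\ref{TheoremLSIgeometric}, i.e. $\mathcal{E}(t)^{\theta-1}\ge C_{LSI}^{-1}\|\kappa_{\varphi,\Gamma(t)}\|_{L^2}^{-1}$, yields
\begin{align*}
-\frac{d}{dt}\mathcal{E}(t)^{\theta}=-\theta\,\mathcal{E}(t)^{\theta-1}\frac{d}{dt}\mathcal{E}(t)\ge \theta c\,C_{LSI}^{-1}\,\|\kappa_{\varphi,\Gamma(t)}\|_{L^2}\ge c'\,\|\partial_t h(t)\|_{L^2},
\end{align*}
where the last inequality uses $(V^i_h)_{i}=F_h\,\partial_t h$ with $F_h$ uniformly invertible for small $\|h\|_{C^1}$ (recall \eqref{EquationNormalVelocityInH}). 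Integrating, $\int_0^{T^{\ast}}\|\partial_t h(t)\|_{L^2}\,dt\le c'^{-1}\mathcal{E}(0)^{\theta}$, which is $\le\varepsilon/4$ once $\delta(\alpha)$ is small enough.

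It remains to trap the flow and pass to the limit. On $[0,T_{STE}]$, Lemma~\ref{LemmaSTEForH} keeps $\|h(t)\|_{C^{2+\alpha}}$ small; for $t\ge t_k\in(0,T_{STE})$, Lemma~\ref{LemmaParabolicSmoothigH}---applied on each $[0,T]$, $T<T^{\ast}$, and restarted at a fixed positive time-step from the (uniformly small, compatible by Remark~\ref{rem-43}) data $h(n\tau_0,\cdot)$ so its $t_k^{-1}$-type constants stay bounded---gives a uniform bound $\|h(t)\|_{C^{3+\alpha}}\le C$. Interpolating $C^{2+\alpha}$ between $C^{3+\alpha}$ and $L^2$ and using $\|h(t)-h(t_k)\|_{L^2}\le\int_{t_k}^{T^{\ast}}\|\partial_t h\|_{L^2}$, one finds $\|h(t)\|_{C^{2+\alpha}}<\varepsilon/2$ for all $t<T^{\ast}$; together with the \eqref{EquationReparametrizationToNormalGraphSmallnesH}-type control of the reparametrizations this forces $\|u(t,\cdot)-\gamma_{\ast}\|_{C^{2+\alpha}}<\varepsilon$, so by continuity $T^{\ast}=T_{max}$. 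If $T_{max}<\infty$, the flow being $C^{2+\alpha}$-close to $\gamma_{\ast}$ makes the curve lengths bounded below and $\|\kappa_{\varphi}\|_{L^2}$ bounded, which by the characterization of the maximal time in \cite{KroenerNovagaPozziAnisoCurvFloImmeredNetworks} is only possible if a curve length tends to zero---the finite-time collapse alternative. If $T_{max}=\infty$, then $\int_0^{\infty}\|\partial_t h\|_{L^2}<\infty$ makes $h(t)$ Cauchy in $L^2$, hence $h(t)\to h_{\infty}$ in $L^2$, and with the uniform $C^{3+\alpha}$ bound and the compact embedding $C^{3+\alpha}\hookrightarrow C^{2+\alpha}$ this convergence holds in $C^{2+\alpha}$. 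Finally, along a sequence $t_n\to\infty$ one has $\|\kappa_{\varphi,\Gamma(t_n)}\|_{L^2}\to0$, so $\mathcal{M}(h_{\infty})=0$ (the angle condition passes to the limit), and Theorem~\ref{TheoremLSIanalytic} at $h_{\infty}$ forces $|E(\Gamma_{\infty})-E(\Gamma_{\ast})|^{1-\theta}\le C_{LSI}\|\mathcal{M}(h_{\infty})\|_W=0$; hence $E(\Gamma_{\infty})=E(\Gamma_{\ast})$, and being $H^2$-close to $\Gamma_{\ast}$ with the same energy, $\Gamma_{\infty}$ is itself a local minimum.

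The main obstacle is the bootstrap of the third paragraph: turning the $L^1_t L^2_x$-bound on $\partial_t h$ into a genuine $C^{2+\alpha}$-trapping that survives up to $t=T_{max}$ (and on to $t=\infty$) with constants that do not degenerate. This is exactly where the smoothing estimate of Lemma~\ref{LemmaParabolicSmoothigH}, with its blow-up as $t_k\to0$, must be stitched together with the short-time bound on an initial slab and with restarts at later times, and where the precise smallness hierarchy among $\delta(\alpha)$, $\varepsilon$, $t_k$ and the various neighbourhood radii has to be arranged; the underlying ODE computation, by contrast, is standard once Corollary~\ref{TheoremLSIgeometric} is in hand.
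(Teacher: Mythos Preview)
Your proposal is correct and follows essentially the same route as the paper: the \L ojasiewicz--Simon inequality (Corollary~\ref{TheoremLSIgeometric}) combined with the energy dissipation yields the Simon ODE estimate $-\frac{d}{dt}\mathcal{E}^{\theta}\ge c\|\partial_t h\|_{L^2}$, which after integration and interpolation against the uniform $C^{3+\alpha}$-bound from Lemma~\ref{LemmaParabolicSmoothigH} traps $h$ in a small $C^{2+\alpha}$-ball; global existence and convergence then follow as you describe.

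Two minor differences are worth noting. First, the paper runs the continuation argument directly in the $h$-variable: it defines $T_{\max}$ as the supremum of times with $\|h(t,\cdot)\|_{C^{2+\alpha}}<2\sigma$, and shows by the trapping estimate that assuming $T_{\max}<\infty$ leads to $\|h(T_{\max},\cdot)\|_{C^{2+\alpha}}\le\sigma$, so that Lemma~\ref{LemmaSTEForH} restarts the $h$-flow with the bound still holding---a direct contradiction. You instead track $\|u-\gamma_{\ast}\|_{C^{2+\alpha}}$ and close the argument by invoking the blow-up criterion from \cite{KroenerNovagaPozziAnisoCurvFloImmeredNetworks}; this works, but note that once you have shown both a lower length bound \emph{and} an $L^2$-curvature bound, that criterion already gives $T_{\max}=\infty$ outright, so your ``finite-time collapse alternative'' is in fact empty here (the confusing phrasing in the theorem statement notwithstanding). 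Second, for the $C^{2+\alpha}$-convergence to $h_\infty$ the paper uses the interpolation inequality $\|h(t)-h(\tilde t)\|_{C^{2+\alpha}}\le C\|h(t)-h(\tilde t)\|_{L^2}^{\beta}\|h(t)-h(\tilde t)\|_{C^{3+\alpha}}^{1-\beta}$ to upgrade the $L^2$-Cauchy property to a $C^{2+\alpha}$-Cauchy property, which is slightly sharper than the compactness-plus-uniqueness argument you sketch, but both are fine.
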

\begin{rem}[Convergence to $\Gamma_{\ast}$]\label{rem5.3}\ \\
	Note that if $\Gamma_{\ast}$ is an isolated local minimum, then our result yields that the flow 
	starting sufficiently close to $\Gamma_{\ast}$ converges to $\Gamma_{\ast}$. 
	
In our elementary geometry configuration  this is indeed the case when a local minimum  for our energy has a triple junction point that is  distinct  from the boundary points $P_{1}$, $P_{2}$, $P_{3}$. In  this case,  by \cite[Lemma~3.4]{Pratelli} (which exploits the smoothness, ellipticity and symmetry of $\varphi^{\circ}$) the energy $E$ admits a unique minimizer.
\end{rem}

\begin{proof}
 Given $u_{0}$ (or any reparametrization thereof fulfilling \eqref{EquationConvergenceCondition} for some $\delta(\alpha)$ small enough), construct $h_{0}$  as in Lemma~\ref{LemmaExistenceRefFramGraphPara}. By Remark~\ref{rem-43} and Lemma~\ref{LemmaSTEForH} we can let $h$ flow according to~\eqref{EquationSystemForH} on some time interval $[0, T_{STE}(\alpha)]$. 
	We will now do the usual procedure involved with the application of \L ojasiewicz-Simon gradient inequalities to stability analysis.  
	To this end set 
	\begin{align}\label{EquationDefSigma}
		\sigma:=\frac{1}{4}\min(\varepsilon_{STE}(\alpha),\delta_{LSI}, \varepsilon_{k}, 1),
	\end{align}
 where $\varepsilon_{STE}(\alpha)$ is from Lemma \ref{LemmaSTEForH},  $\delta_{LSI}$ from Corollary \ref{TheoremLSIgeometric}, and $\varepsilon_{k}$ from Lemma~\ref{LemmaParabolicSmoothigH} resp. Remark~\ref{RemarkOnAssumptionBoundH}.
	(Further smallness conditions on $\sigma$ will be added throughout the argumentation as needed along the proof.) In particular, this choice of $\sigma$ will give us in the following good properties associated with the constants in the right-hand side of \eqref{EquationDefSigma}.
	
	 By Lemma~\ref{LemmaExistenceRefFramGraphPara} we know that we can find   $\delta(\alpha)>0$ such that for all initial $u_0$ with $$\|u_0-\gamma_{\ast}\|_{(C^{2+\alpha}([0,1];\R^2))^3}\le\delta(\alpha)$$ 
	we have 
	$$h_0\in (C^{2+\alpha}([0,1]))^3\text{ with }\|h_{0}\|_{(C^{2+\alpha}([0,1]))^3}\le \sigma.$$
	 As already mentioned, by Lemma \ref{LemmaSTEForH} the map $h$ (with initial data $h_{0}$) solves \eqref{EquationSystemForH}, and the corresponding network parametrized by $u$ ( with  $u^{i}(t,x)= \gamma_{\ast}^{i}(x)+ h^{i}(t,x) \nu^{i}_{\ast} + \mu^{i}(t,x) \tau^{i}_{\ast} $ )
	 is by construction a solution of \eqref{EquationGeometricProblem}, at least on some time interval $[0,T_{STE}(\alpha)]$.  	 We have that $\Gamma(t)=\Gamma_{h(t,\cdot)}$.
	 Now, define $$T_{\max} \in (0, \infty]$$ to be  the largest time such that the solution  $h(t,x)$ exists\footnote{Hereby, we mean that $h$ solves \eqref{EquationSystemForH} classically and that 
	 $h\in C^{\frac{2+\alpha}{2},2+\alpha}([0,T]\times[0,1]))^3$ for any 
	 $0<T < T_{\max}$.
	 } and such that
\begin{align}\label{EquationContradictionAsumption}
		 \|h(t,\cdot)\|_{(C^{2+\alpha}([0,1]))^3}< 2\sigma \text{ for all $t \in [0,T_{\max})$}.
	\end{align}	 
	
	The constant $\delta(\alpha)$ and hence  $\sigma$ is here chosen so small that the network $\Gamma(t)=\Gamma_{h(t,\cdot)}$ falls in the $H^{2}$-ball  around $\Gamma_{\ast}$ characterizing its local minimal property: in other words $E(\Gamma_{h(t,\cdot)})\geq E(\Gamma_{\ast})$ as long as the flow exists.
	Note that $T_{\max}$ is indeed bigger than zero as we can guarantee for a potentially smaller $\sigma$ the continuous dependency on the initial data due to Lemma \ref{LemmaContinuousDependency}. Then, for small enough initial data (i.e. $\sigma$ small enough) Lemma \ref{LemmaContinuousDependency} guarantees us that 
		\begin{align*}
			\|h\|_{(C^{\frac{2+\alpha}{2}, 2+\alpha}([0, T_{STE}(\alpha)]\times [0,1]))^3}< 2\sigma.
		\end{align*} 
	In particular, for these initial data we have that 
	\begin{align*}
		\forall t\in[0, T_{STE}(\alpha)]: \|h(t,\cdot)\|_{(C^{2+\alpha}([0,1]))^3}\le \|h\|_{(C^{\frac{2+\alpha}{2}, 2+\alpha}([0, T_{STE}(\alpha)]\times [0,1]))^3}< 2\sigma.
	\end{align*}
    Thus for these initial data we know that $T_{\max}\ge T_{STE}(\alpha)$.

 Now assume that $T_{\max}$ is finite. We directly see that due to the choice of $\sigma$ we can apply Lemma~\ref{LemmaSTEForH} in $t=T_{\max}-\frac{T_{STE}(\alpha)}{2}$ using $\|h(t)\|_{C^{2+\alpha}}\le 2\sigma\le \varepsilon_{STE}(\alpha)$  (recall \eqref{EquationContradictionAsumption}) to see that the solution indeed exists at least on $[0, T_{\max}+\frac{T_{STE}(\alpha)}{2}]$. So the only possibility for $T_{\max}$ being finite is indeed that \eqref{EquationContradictionAsumption} is not fulfilled anymore. Furthermore, at every $\tilde{t}\in [0, T_{\max})$ we can apply Lemma \ref{LemmaParabolicSmoothigH} with $T=T_{STE}(\alpha)$, $R(\alpha)=R_{STE}(\alpha)$\footnote{Note that due to Remark \ref{RemarkOnAssumptionBoundH} the smallness condition for $R(\alpha)$ is guaranteed because of the choice of $\sigma$.} to conclude for arbitrary but fixed $0<t_k<T_k<T_{STE}(\alpha)$ and $t\in[\tilde{t}+t_k, \tilde{t}+T_k]$ that
 	\begin{align*}
 		\|h(t,\cdot)\|_{(C^{3+\alpha}([0,1]))^3}\le C(t_k, T_k, R_{STE}(\alpha), \sigma).
 	\end{align*}

As this estimate is independent of $\tilde{t}$ we see that 
for fixed $t_S\in(0, T_{\max})$  we have that
	\begin{align}\label{EquationIntegralEstimateH}
		\forall t\in[t_S, T_{\max}): \|h(t,\cdot)\|_{(C^{3+\alpha}([0,1]))^3}\le C(\sigma,t_{S}, R_{STE}(\alpha)).
	\end{align}
Still holding onto the assumption that $T_{\max} <\infty$, we distinguish now between two cases:
if $E(\Gamma(\bar{t}))=E(\Gamma_{\ast})$ for some $\bar{t} \in [0,T_{\max}]$ then $\Gamma(\bar{t})$ is also a local minimum. Therefore, the flow becomes stationary from $\bar{t}$ on, and $\Gamma(\bar{t})$ provides a stationary solution which has the same energy as $\Gamma_{\ast}$ (but is possibly different from $\Gamma_{\ast}$).

	Now let us assume that $E(\Gamma(t))>E(\Gamma_{\ast})$ for all $t \in [0,T_{\max}]$.
     Then, using that $\sigma <1$, we have for all $t\in(0,T_{\max})$ that\footnote{For better readability we drop the time variable in $h$ in the following!}
	\begin{align}
	-\frac{d}{dt}\abVal{E(\Gamma(t))&-E(\Gamma_{\ast})}^{\theta}=-\theta\abVal{E(\Gamma(t))-E(\Gamma_{\ast})}^{\theta-1}\frac{d}{dt}E(\Gamma(t))\notag\\
	&=\theta\abVal{E(\Gamma(t))-E(\Gamma_{\ast})}^{\theta-1}\sum_{i=1}^3\int_0^1(D^2\varphi^{\circ}(\nu^i_{\Gamma(t)})\tau_{\Gamma(t)}^i\cdot\tau_{\Gamma(t)}^i)\kappa_{\Gamma(t)}^i\nu^i_{\Gamma(t)}\cdot \partial_t u^{i}(t)ds_{\Gamma(t)}^i\notag\\
	&=\theta\abVal{E(\Gamma(t))-E(\Gamma_{\ast})}^{\theta-1}\sum_{i=1}^3\int_0^1\left((D^2\varphi^{\circ}(\nu^i_{\Gamma(t)})\tau_{\Gamma(t)}^i\cdot\tau_{\Gamma(t)}^i)\kappa_{\Gamma(t)}^i\nu^i_{\Gamma(t)}\right)^2ds_{\Gamma(t)}^i\notag\\
	&\ge C_{LSI}^{-1}\left(\sum_{i=1}^3\int_0^1\left((D^2\varphi^{\circ}(\nu^i_{\Gamma(t)})\tau_{\Gamma(t)}^i\cdot\tau_{\Gamma(t)}^i)\kappa_{\Gamma(t)}^i\nu^i_{\Gamma(t)}\right)^2ds_{\Gamma(t)}^i\right)^{1/2}\label{EquationStabAnaMainCalcu}\\
	&=C_{LSI}^{-1}\norm{V_{\Gamma(t)}}_{L^2(\Gamma(t))}\ge C(C_{LSI},\Gamma_{\ast})\norm{\partial_t h(t,\cdot)(\nu_{\ast}\cdot\nu_{h})+\partial_t\mu(t,\cdot)(\tau_{\ast}\cdot\nu_{h})}_{(L^2([0,1]))^3}\notag\\
	&\ge C(C_{LSI}, \Gamma_{\ast})|\|\partial_th(t,\cdot)(\nu_{\ast}\cdot\nu_{h})\|_{(L^2([0,1]))^3}-\|\partial_t\mu(t,\cdot)(\tau_{\ast}\cdot\nu_{h})\|_{(L^2([0,1]))^3}|\notag\\
	&\ge C(C_{LSI}, \Gamma_{\ast})\norm{\partial_t h(t,\cdot)}_{(L^2([0,1]))^3}.\notag
\end{align}
Hereby, we used in the second line \eqref{EquationVariationAnisotropicLength} and $\eqref{EquationGeometricProblem}_4$, in the third line $\eqref{EquationGeometricProblem}_1$, in the fourth line the LSI  \eqref{EquationLSIgeometric}, then \eqref{EquationNormalVelocityInH} together with the fact that the length of $\Gamma(t)$ is controlled above and below by the length of $\Gamma_{\ast}$ thanks to \eqref{EquationContradictionAsumption}  with possibly an even smaller $\sigma$, and in the fifth line the inverse triangle inequality. For the last inequality we used that due to \eqref{EqRelationNormalTangentialPart} we have that
\begin{align*}
	\|\partial_t\mu(t,\cdot)\|_{(L^2([0,1]))^3}\le C(\Gamma_{\ast})\|\partial_t h\|_{(L^2([0,1]))^3},
\end{align*}
where the constant depends on the contact angles (and thus the entries in the matrix $\mathcal{I}$).  
Now assuming $\sigma$ small enough such that 
\begin{align}\label{condnutau}
\tau_{\ast}\cdot\nu_h\le \frac{1}{4C(\Gamma_{\ast})} \qquad \text{ and } \qquad \nu_{\ast}\cdot\nu_h\ge \frac{1}{2}
\end{align} pointwise for all $t \in [0,T_{\max})$ (this is possible again thanks to  $h$ fulfilling $\eqref{EquationContradictionAsumption}$) we have that
\begin{align*}
	\|\partial_t\mu(t,\cdot)(\tau_{\ast}\cdot\nu_h)\|_{(L^2([0,1]))^3}\le \frac{1}{2}\|\partial_t h(t,\cdot)(\nu_{\ast}\cdot\nu_h)\|_{(L^2([0,1]))^3}.
\end{align*}
 Due to this the last inequality in \eqref{EquationStabAnaMainCalcu} holds. Summing these calculations up we conclude for a suitable $C=C(C_{LSI}, \Gamma_{\ast})>0$ that
\begin{align}\label{EquationEstimatepartialtu}
	\norm{\partial_th(t,\cdot)}_{(L^2([0,1]))^3}\le -C \frac{d}{dt}\abVal{E(\Gamma(t))-E(\Gamma_{\ast})}^{\theta}.
\end{align}
Integrating \eqref{EquationEstimatepartialtu} in time yields for all $t\in(0,T_{\max})$ that
\begin{align}
	\|h(t,\cdot)\|_{(L^2([0,1]))^3}&\le \int_0^t \|\partial_t h(t,\cdot)\|_{(L^2([0,1]))^3}dt+\|h_0\|_{(L^2([0,1]))^3} \notag\\
	&\le -C|E(\Gamma(t))-E(\Gamma_{\ast})|^{\theta}+C|E(\Gamma_0)-E(\Gamma_{\ast})|^{\theta}+\|h_0\|_{(L^2([0,1]))^3}\notag\\
	&\le C|E(\Gamma_0)-E(\Gamma_{\ast})|^{\theta}+\|h_0\|_{(L^2([0,1]))^3}\label{EquationEstimateHeightFunction}\\
	&\le C\|h_0\|_{(C^1([0,1]))^3}^{\theta}+\|h_0\|_{(C^1([0,1]))^3}\notag\\
	&\le C\|h_0\|_{(C^1([0,1]))^3}^{\theta}+\|h_0\|_{(C^1([0,1]))^3}^{\theta}\notag\\
	&\le C\|h_0\|_{(C^1([0,1]))^3}^{\theta},\notag
\end{align}
with $C=C(C_{LSI}, \Gamma_{\ast}, \varphi^{\circ})>0$ .
Hereby, we used in the fifth step the general smallness assumption on $\sigma$ and in the fourth step the Lipschitz continuity of the anisotropic surface energy due to the smoothness of $\varphi^{\circ}$ away from the origin, \eqref{condnutau}, the formula \eqref{EquationFormulaNormalFunctionInH} for the normal $\nu_h$, and the general fact that
$$|(\gamma_{h_{1}})_{x}| |(\gamma_{h_{2}})_{x}||\tau_{h_{1}} -\tau_{h_{2}}|^{2}+ (|(\gamma_{h_{1}})_{x}| -|(\gamma_{h_{2}})_{x}|)^{2} = |(\gamma_{h_{1}})_{x} -(\gamma_{h_{2}})_{x}|^{2}. $$
Applying real interpolation theory (see \cite[(i) of Theorem at page 29 and (ii) of Theorem at page 5]{TriebelTheoryOfFunctionSpaces2}, \cite[Theorem 6.4.5(3)]{BerghLoesfstroemInterpolationSpaces} and \cite[Theorem 4.6.1(e), page 328]{TriebelInterpolationTheoryFunctionSapcesDifferentialOperators}  ) we get with a suitable interpolation exponent $\beta\in(0,1)$ and using \eqref{EquationIntegralEstimateH} and \eqref{EquationEstimateHeightFunction} for any $t\in[t_S, T_{\max})$ that
\begin{align*}
	\|h(t,\cdot)\|_{(C^{2+\alpha}([0,1]))^3}\le\|h(t,\cdot)\|_{(C^{3}([0,1]))^3} &\le C\|h(t,\cdot)\|_{(L^2([0,1]))^3}^{\beta}\|h(t,\cdot)\|_{(C^{3+\alpha}([0,1]))^3}^{1-\beta}\\
	&\le C(\sigma, t_{S}, R_{STE}(\alpha), \beta, \Gamma_{\ast}, C_{LSI}, \varphi^{\circ})\|h_0\|_{(C^1([0,1]))^3}^{\beta\theta}\\&\le C(\sigma, t_{S}, R_{STE}(\alpha), \beta, \Gamma_{\ast}, C_{LSI},\varphi^{\circ})\|h_0\|_{(C^{2+\alpha}([0,1]))^3}^{\beta\theta}.
\end{align*}
 Now, by setting $C_{1}:=C(\sigma, t_{S}, R_{STE}(\alpha), \beta, \Gamma_{\ast}, C_{LSI},\varphi^{\circ})$ and choosing 
\begin{align*}
	\|h_0\|_{(C^{2+\alpha}([0,1]))^3}\le e^{\frac{1}{\beta\theta}\ln(\frac{\sigma}{C_{1}})},
\end{align*}
which is - again using Lemma \ref{LemmaExistenceRefFramGraphPara} - guaranteed for $u_0$ close enough to $\gamma_{\ast}$, it follows for all $t\in[t_S,T_{\max})$ that
\begin{align}\label{EquationContradictionFinal}
\|h(t,\cdot)\|_{C^{2+\alpha}([0,1])}\le \sigma.
\end{align}
Note that due to Step 3 in the proof of Lemma \ref{LemmaParabolicSmoothigH} we see that 
$[t_S, T_{\max}+T_k]\to \R, t\mapsto \|h(t,\cdot)\|_{C^3}$ is continuous. Using the interpolation property of H\"older spaces (see \cite[Example 1.8]{LunardiInterpolationTheory}) we see that 
\begin{align*}
	\|h(t_1,\cdot)-h(t_2,\cdot)\|_{C^{2+\alpha}}\le C \|h(t_1,\cdot)-h(t_2,\cdot)\|^{1-\alpha}_{C^{2}}\|h(t_1,\cdot)-h(t_2,\cdot)\|^{\alpha}_{C^{3}}
\end{align*}
 In particular $\|h(t,\cdot)\|_{C^{2+\alpha}}$ is also continuous in time as the $C^{2}$- and $C^3$-norm both are continuous in time. With this and \eqref{EquationContradictionFinal} we conclude that \eqref{EquationContradictionAsumption} is also fulfilled beyond $T_{\max}$, yielding a contradiction to the maximality of $T_{\max}$. If follows that $T_{\max}=\infty$.\\
If $T_{\max}=\infty$ in our assumption \eqref{EquationContradictionAsumption} then, if a local minimum is not attained in finite time, by proceeding as above we can still choose $\sigma$ so small that \eqref{EquationEstimateHeightFunction} holds.\\
Now we note that due to \eqref{EquationEstimatepartialtu} and the fact that $E$ is decreasing in time, we conclude that $\partial_t h\in L^1([0,\infty),(L^2([0,1]))^3)$. 
Therefore, there exists a unique $h_{\infty}\in (L^2([0,1]))^3$ - inducing a network $\Gamma_{\infty}$ - with 
\begin{align*}
	\underset{t\to\infty}{\lim}h(t,\cdot)=h_{\infty}\text{ in }(L^2([0,1]))^3.
\end{align*}
In particular, $h(t,\cdot)$ is a Cauchy sequence in $(L^2([0,1]))^3$.
Furthermore by \eqref{EquationEstimateHeightFunction} we have that $\|h_{\infty}\|_{(L^2([0,1]))^3}\le C \| h_{0}\|_{(C^{1}([0,1]))^{3}}^{\theta}$.

Observe that \eqref{EquationIntegralEstimateH} is a local estimate that holds for all $t>t_S$ and the constant $C(\sigma, t_S, R_{STE}(\alpha))$ does not depend on the final existence time. Thus for any $t,\tilde{t}>t_S$ we conclude with the same interpolation argument as before that
	\begin{align*}
		\|h(t,\cdot)-h(\tilde{t},\cdot)\|_{(C^{2+\alpha}([0,1]))^3}\le C(\sigma, t_{S}, R_{STE}(\alpha), \beta, \Gamma_{\ast}, C_{LSI},\varphi^{\circ})\|h(t,\cdot)-h(\tilde{t},\cdot)\|_{(L^2([0,1]))^3}^{\beta}.
	\end{align*}
This implies that $h(t,\cdot)$ is also a Cauchy sequence in $(C^{2+\alpha}([0,1]))^3$ and thus has a limit in this space. By uniqueness of limits we infer that $h_{\infty}$ is this limit and has the higher regularity. As there is a sequence $(t_n)_{n\in\N}\subset \R^+$ with $t_n\to\infty$ and $\|\partial_t h(t_n,\cdot)\|_{(L^2([0,1]))^3}\to 0$, this implies by \eqref{EquationParabolicVersionOfPDE} that 
\begin{align*}
	\|F_{h_{\infty}}^{-1}(\varphi^{\circ}(\nu_{h_{\infty}})(D^2\varphi^{\circ}(\nu_{h_{\infty}})\tau_{h_{\infty}}\cdot\tau_{h_{\infty}})\kappa_{h_{\infty}})\|_{(L^2([0,1]))^3}=0.
\end{align*}
Due to Lemma \ref{lemma2.1} this implies that $\kappa_{h_{\infty}}\equiv 0$. Thus $\Gamma_{\infty}$ consists of three straight lines. Finally, as $h_{\infty}$ also fulfills $\|h_{\infty}\|_{(C^{2+\alpha}([0,1]))^3}\le 2\sigma$ due to the convergence in the $C^{2+\alpha}$-norm, we can apply Corollary \ref{TheoremLSIgeometric} for $h_{\infty}$ and conclude that
\begin{align}
	|E(\Gamma_{\infty})-E(\Gamma_{\ast})|^{1-\theta}=0,
\end{align}
which directly implies that $\Gamma_{\infty}$ is a local energy minimum with $E(\Gamma_{\infty})=E(\Gamma_{\ast})$. This finishes the proof. 
\end{proof}

\section*{Acknowledgements}

MG und PP have been supported by the DFG (German Research Foundation) Projektnummer: 404870139.

\appendix
\section{Parabolic H\"older spaces}\label{AppendixHoelder}

In this section we want to introduce parabolic H\"older spaces and give some basic properties of them. They can be found, e.g., in \cite[p. 66 and 91]{SolonnikovExistenceTheory}. 

We will only consider $\Omega=[0,1]$ as space and $I=[0,T]$ for some $T>0$ as time domain. Most of the following is also true for more general situations. We first introduce the H\"older semi-norms in space and time given by
\begin{align*}
	\langle f\rangle_{x,\alpha}&:=\underset{x_1, x_1\in \Omega, t\in I}{\sup}\frac{|f(t,x_1)-f(t,x_2)|}{|x_1-x_2|^{\alpha}},\\
	\langle f\rangle_{t,\alpha}&:=\underset{x\in \Omega, t_1, t_2\in I}{\sup}\frac{|f(t_1,x)-f(t_2,x)|}{|t_1-t_2|^{\alpha}},
\end{align*} 
for a function $f: I\times \Omega\to\R^n$ and $\alpha\in(0,1)$.
Now, we define for  $\alpha\in (0,1), n\in\N$ the spaces
\begin{align*}
	C^{\frac{\alpha}{2}, \alpha}(I\times\Omega, \R^n)&:=\{f\in C(I\times\Omega, \R^n)| \langle f\rangle_{t,\frac{\alpha}{2}}<\infty, \langle f\rangle_{x,\alpha}<\infty \},\\
	\|f\|_{C^{\frac{\alpha}{2}, \alpha}(I\times\Omega, \R^n)}&=\|f\|_{\infty}+\langle f\rangle_{t,\frac{\alpha}{2}}+\langle f\rangle_{x,\alpha},\\
	C^{\frac{1+\alpha}{2},1+\alpha}(I\times\Omega, \R^n)&:=\{f\in C(I\times\Omega, \R^n)|\partial_xf\in C(I\times\Omega, \R^n), \langle \partial_xf\rangle_{x,\alpha}<\infty, \langle f\rangle_{t,\frac{1+\alpha}{2}}<\infty, \\
	 &\phantom{:=\{ }\langle\partial_x f\rangle_{t,\frac{\alpha}{2}}<\infty \},\\
	\|f\|_{C^{\frac{1+\alpha}{2}, 1+\alpha}(I\times\Omega, \R^n)}&=\|f\|_{\infty}+\|\partial_xf\|_{\infty}+\langle \partial_x f\rangle_{x,\alpha}+\langle f\rangle_{t,\frac{1+\alpha}{2}}+\langle\partial_x f\rangle_{t,\frac{\alpha}{2}} ,\\
	C^{\frac{2+\alpha}{2}, 2+\alpha}(I\times\Omega, \R^n)&:=\{f\in C(I\times\Omega, \R^n)| f, \partial_x f, \partial_{xx}f, \partial_tf\in C(I\times\Omega, \R^n), \langle \partial_t f\rangle_{x,\alpha}<\infty, \\
	&\phantom{:=\{ } \langle\partial_{xx}f\rangle_{x,\alpha}<\infty, \langle \partial_xf\rangle_{t,\frac{1+\alpha}{2}}<\infty, \langle\partial_{xx}f\rangle_{t,\frac{\alpha}{2}}<\infty, \langle\partial_t f\rangle_{t,\frac{\alpha}{2}}<\infty  \},\\
	\|f\|_{C^{\frac{2+\alpha}{2},2+\alpha}(I\times\Omega)}&:=\sum_{0\le 2i+j\le 2}\|\partial_t^i\partial^{j}_xf\|_{\infty}+\langle \partial_{xx}f\rangle_{x,\alpha}+\langle \partial_t f\rangle_{x,\alpha}+\langle\partial_x f \rangle_{t,\frac{1+\alpha}{2}}\\
	&+\langle \partial_{xx}f\rangle_{t,\frac{\alpha}{2}}+\langle \partial_t f\rangle_{t,\frac{\alpha}{2}}.
\end{align*}
\begin{defi}[Parabolic H\"older spaces]\ \\
	We call ($C^{\frac{k+\alpha}{2}, k+\alpha}(I\times\Omega)$, $\|\cdot\|_{C^{\frac{k+\alpha}{2}, k+\alpha}(I\times\Omega)})$ with $k\in \{0,1,2\}, \alpha\in(0,1)$ parabolic H\"older spaces.
\end{defi}
Note that parabolic H\"older spaces can be defined more generally. Then, the denominator depends on the order of the considered PDE. 
The following two properties are crucial for our work.
\begin{lemma}[Product estimates in parabolic H\"older spaces]\label{LemmaProductEstimatesParabolichHspace}\ \\
	Let $k\in\{0,1,2\},\alpha\in(0,1)$ and $f,g\in C^{\frac{k+\alpha}{2}, k+\alpha}(I\times\Omega)$. Then we have 
	\begin{align}\label{EquationProductareClosedinHolderSpaces}
		fg\in C^{\frac{k+\alpha}{2}, k+\alpha}(I\times\Omega),
	\end{align}
	and furthermore we have that
	\begin{align}
		\|fg\|_{C^{\frac{k+\alpha}{2}, k+\alpha}(I\times\Omega)}&\le C \|f\|_{C^{\frac{k+\alpha}{2}, k+\alpha}(I\times\Omega)}\|g\|_{C^{\frac{k+\alpha}{2}, k+\alpha}(I\times\Omega)}.\label{EquationProductEstimatesHolderSpaces1}
	\end{align}
\end{lemma}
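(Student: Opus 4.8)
The plan is to prove the product estimates by reducing everything to the elementary algebra of seminorms, handled degree by degree for $k=0,1,2$. The key tool throughout is the Leibniz rule combined with the observation that the anisotropic (parabolic) H\"older seminorms interact multiplicatively, up to absorbing lower-order terms into the sup-norm. First I would record the basic building block: for bounded functions $f,g$ and a seminorm $\langle\cdot\rangle$ of the form $\langle f\rangle=\sup |f(p)-f(q)|/d(p,q)^\beta$, one has the pointwise identity $f(p)g(p)-f(q)g(q)=(f(p)-f(q))g(p)+f(q)(g(p)-g(q))$, hence
\begin{align*}
\langle fg\rangle\le \langle f\rangle\,\|g\|_\infty+\|f\|_\infty\,\langle g\rangle.
\end{align*}
Applied to $\langle\cdot\rangle_{x,\alpha}$ and $\langle\cdot\rangle_{t,\alpha/2}$ this immediately gives \eqref{EquationProductareClosedinHolderSpaces} and \eqref{EquationProductEstimatesHolderSpaces1} for $k=0$, since $C^{\frac{\alpha}{2},\alpha}\hookrightarrow C^0$ controls the sup-norms as well.

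For $k=1$ I would differentiate once in space: $\partial_x(fg)=(\partial_x f)g+f(\partial_x g)$, and then estimate each summand in $C^{\frac{\alpha}{2},\alpha}$ using the $k=0$ case, noting that $\partial_x f,\partial_x g$ and $f,g$ are all controlled in $C^{\frac{\alpha}{2},\alpha}$ by the $C^{\frac{1+\alpha}{2},1+\alpha}$-norm. The only genuinely parabolic seminorm left to treat is $\langle fg\rangle_{t,\frac{1+\alpha}{2}}$, which again follows from the product rule for seminorms above applied with $\beta=\frac{1+\alpha}{2}$, together with $\langle f\rangle_{t,\frac{1+\alpha}{2}},\langle g\rangle_{t,\frac{1+\alpha}{2}}<\infty$. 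Collecting the pieces and using that a finite sum of products of norms is bounded by a constant times the product of the full norms gives the $k=1$ estimate. For $k=2$ the same scheme applies with $\partial_{xx}(fg)=(\partial_{xx}f)g+2(\partial_x f)(\partial_x g)+f(\partial_{xx}g)$ and $\partial_t(fg)=(\partial_t f)g+f(\partial_t g)$; each term is a product of a factor controlled in $C^{\frac{\alpha}{2},\alpha}$ (the top-order derivatives, which land in $C^{\frac{\alpha}{2},\alpha}$ by definition of $C^{\frac{2+\alpha}{2},2+\alpha}$) with a factor controlled in $C^{\frac{\alpha}{2},\alpha}$ as well, so one invokes the $k=0$ product estimate once more, plus the mixed seminorm $\langle\partial_x(fg)\rangle_{t,\frac{1+\alpha}{2}}$ handled as in the $k=1$ step.

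The mild subtlety — and the only place one has to be slightly careful — is bookkeeping: one must verify that \emph{every} seminorm appearing in the definition of $\|\cdot\|_{C^{\frac{2+\alpha}{2},2+\alpha}}$ (namely $\langle\partial_{xx}f\rangle_{x,\alpha}$, $\langle\partial_t f\rangle_{x,\alpha}$, $\langle\partial_x f\rangle_{t,\frac{1+\alpha}{2}}$, $\langle\partial_{xx}f\rangle_{t,\frac{\alpha}{2}}$, $\langle\partial_t f\rangle_{t,\frac{\alpha}{2}}$) is finite for $fg$, and that each is dominated by products of seminorms/sup-norms already available. Since $C^{\frac{2+\alpha}{2},2+\alpha}\hookrightarrow C^{\frac{1+\alpha}{2},1+\alpha}\hookrightarrow C^{\frac{\alpha}{2},\alpha}$ with the appropriate norm inequalities, every lower-order factor produced by the Leibniz rule is controlled by the full $C^{\frac{2+\alpha}{2},2+\alpha}$-norm, and the constant $C$ in \eqref{EquationProductEstimatesHolderSpaces1} is just the combinatorial constant from counting the terms (at most the binomial coefficients $2i+j\le 2$). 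There is no real obstacle here — this is a standard Banach-algebra-type argument — so I would present it compactly: state the seminorm product inequality as a one-line lemma, then run through $k=0,1,2$ citing it, and conclude by summing.
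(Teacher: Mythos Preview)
Your argument is correct and follows the standard Banach-algebra route for parabolic H\"older spaces: the seminorm product inequality plus the Leibniz rule, iterated through $k=0,1,2$, with the embeddings $C^{\frac{2+\alpha}{2},2+\alpha}\hookrightarrow C^{\frac{1+\alpha}{2},1+\alpha}\hookrightarrow C^{\frac{\alpha}{2},\alpha}$ handling the lower-order factors. The paper itself does not give a proof but simply refers to \cite[Lemma 2.16]{goesswein2019Dissertation}; your write-up is therefore more detailed than what appears in the paper, and is precisely the kind of direct verification one would expect to find behind that citation.
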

\begin{proof}
	Cf. \cite[Lemma 2.16]{goesswein2019Dissertation}.
\end{proof}
\begin{lemma}[Contractivity property of lower order terms in parabolic H\"older spaces]\label{LemmaContractivityLowerOrderTerms}\ \\
	Let $k,k'\in\{0,1,2\},k'<k,\alpha\in(0,1)$. Then, we have for any $f\in C^{\frac{k+\alpha}{2}, k+\alpha}(I\times\Omega)$ that
	\begin{align}
		\|f\|_{C^{\frac{k'+\alpha}{2}, k'+\alpha}(I\times\Omega)}\le \|f\big|_{t=0}\|_{C^{k'+\alpha}(\Omega)}+CT^{\bar{\alpha}}\|f\|_{C^{\frac{k+\alpha}{2}, k+\alpha}(I\times\Omega)}.
	\end{align}
	Hereby, the constants $C$ and $\bar{\alpha}$ depend on $\alpha,k,k'$ and $\Omega$. Especially, if $f\big|_{t=0}\equiv 0$, we have
	\begin{align}\label{EquationContractivityofLowerOrderTerms}
		\|f\|_{C^{\frac{k'+\alpha}{2},k'+\alpha}(I\times\Omega)}\le CT^{\bar{\alpha}}\|f\|_{\frac{k+\alpha}{2}, C^{k+\alpha}(I\times\Omega)}.
	\end{align}
\end{lemma}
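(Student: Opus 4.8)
The plan is to reduce the estimate to the vanishing-initial-data case \eqref{EquationContractivityofLowerOrderTerms}, and then to prove that special case by interpolating between the temporal and spatial Hölder norms after subtracting the time-$0$ slice. For the first reduction I would write $f = (f - f|_{t=0}) + f|_{t=0}$, where $f|_{t=0}$ is regarded as a function on $I\times\Omega$ constant in $t$. The function $g := f - f|_{t=0}$ satisfies $g|_{t=0}\equiv 0$ and lies in $C^{\frac{k+\alpha}{2},k+\alpha}(I\times\Omega)$ with norm bounded by $C\|f\|_{C^{\frac{k+\alpha}{2},k+\alpha}}$ (note $\|f|_{t=0}\|_{C^{k+\alpha}(\Omega)}\le \|f\|_{C^{\frac{k+\alpha}{2},k+\alpha}}$ since taking the time-slice only drops seminorms). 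The term $f|_{t=0}$, being $t$-independent, has $\|f|_{t=0}\|_{C^{\frac{k'+\alpha}{2},k'+\alpha}(I\times\Omega)} = \|f|_{t=0}\|_{C^{k'+\alpha}(\Omega)} \le \|f|_{t=0}\|_{C^{k+\alpha}(\Omega)}$, which is the first summand on the right-hand side (one may freely enlarge the $k'$-norm to the $k$-norm of the slice, or keep it as stated). So everything comes down to \eqref{EquationContractivityofLowerOrderTerms}.

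For \eqref{EquationContractivityofLowerOrderTerms}, with $f|_{t=0}\equiv 0$, the key point is that the sup-norm and each seminorm appearing in $\|f\|_{C^{\frac{k'+\alpha}{2},k'+\alpha}}$ can be "traded" against a power of $T$ times a higher-order quantity. The model mechanism is: if $\phi:I\to\R$ is Hölder of order $\beta$ and $\phi(0)=0$, then $\|\phi\|_{\infty}\le T^{\beta}\langle\phi\rangle_{t,\beta}$; more generally, interpolation between a $C^{\beta}$-in-time seminorm (which vanishes in the $T\to 0$ limit after the subtraction) and a higher $C^{\beta'}$-in-time seminorm gives a factor $T^{\beta'-\beta}$. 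One applies this slice-by-slice in $x$ to each of the finitely many terms defining $\|f\|_{C^{\frac{k'+\alpha}{2},k'+\alpha}}$: the spatial sup-norms of $f$ and its $x$-derivatives up to order $k'$ (gaining $T$-powers from vanishing at $t=0$ together with the time-regularity of these derivatives, which is available because $k'<k$), the spatial Hölder seminorms $\langle\partial_x^j f\rangle_{x,\alpha}$ for $j\le k'$, and the temporal seminorms. The crucial structural fact is that the $C^{\frac{k+\alpha}{2},k+\alpha}$-norm controls strictly more time-derivatives / higher parabolic-order quantities than the $C^{\frac{k'+\alpha}{2},k'+\alpha}$-norm, so in every case there is genuine room to extract a positive power $T^{\bar\alpha}$. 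Collecting the (finitely many) resulting estimates and taking $\bar\alpha$ to be the smallest exponent that appears yields \eqref{EquationContractivityofLowerOrderTerms}.

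I would note that this is a standard, by-now-classical parabolic interpolation estimate, and the cleanest exposition simply cites it: the statement and proof are essentially \cite[Lemma 2.17]{goesswein2019Dissertation} (companion to the product estimate \cite[Lemma 2.16]{goesswein2019Dissertation} already used), and analogous statements appear in \cite[Section 2]{GarckeGoessweinOnTheSurfaceDiffusionFlow}; so the proof can be given as a one-line reference with the reduction above sketched for the reader's convenience. The only mildly delicate bookkeeping — and hence the main obstacle if one insists on writing it out — is verifying that for the particular mixed seminorms (e.g.\ $\langle\partial_x f\rangle_{t,\frac{1+\alpha}{2}}$ when $k'=1$ versus the quantities available in the $k=2$ norm) the "spare" order of regularity is always strictly positive, so that the exponent $\bar\alpha$ is strictly positive and uniform; this is a finite check over the three relevant pairs $(k',k)\in\{(0,1),(0,2),(1,2)\}$ and presents no real difficulty.
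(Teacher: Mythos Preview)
Your proposal is correct and in fact subsumes the paper's own proof, which consists solely of the citation ``Cf.\ \cite[Lemma 2.17]{goesswein2019Dissertation}.'' The reduction to vanishing initial data and the interpolation sketch you give are the standard argument behind that reference, so there is nothing to add.
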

\begin{proof}
	Cf. \cite[Lemma 2.17]{goesswein2019Dissertation}.
\end{proof}

Further useful properties can be found for instance in \cite{goesswein2019Dissertation} and \cite{KroenerNovagaPozziAnisoCurvFloImmeredNetworks}.

As a final remark we want to note that to avoid confusion we denote the classical H\"older spaces by $C^{k+\alpha}$ instead of $C^{k,\alpha}$, which is the more common notation. So for example we have that
\begin{align*}
	C^{2+\alpha}(\Omega)=C^{2,\alpha}(\Omega):=\{f\in C^2(\Omega)\big| \sum_{i=0}^2\|\partial_x^if\|_{\infty}+[\partial_x^2f]_{x,\alpha}<\infty\}.
\end{align*}

\section{Additional proofs}\label{AppendixAdditionalProofs}

\begin{lemma}\label{LemmaEquivalenceForceBalanceYoungModulus}\ \\
	Suppose that $\theta^1, \theta^2, \theta^3\in(0,\pi)$ are such that $\theta^1+\theta^2+\theta^3=2\pi$. Furthermore, let $\nu^1,\nu^2,\nu^3\in\R^2$ be such that $\angle(\nu^1,\nu^2)=\theta^3, \angle(\nu^2,\nu^3)=\theta^1,\angle(\nu^3,\nu^1)=\theta^2$ and $|\nu^i|=1, i=1,2,3$. Finally, let $\tilde{\alpha}^1,\tilde{\alpha}^2,\tilde{\alpha}^3\in\R_+$. Then, \eqref{EquationYoungsModulus} and \eqref{EquationForceBalanceTripleJunction} are equivalent. 
\end{lemma}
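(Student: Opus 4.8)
The statement is the classical equivalence between a force balance at a triple junction and a sine rule for the three contact angles (sometimes called Lami's theorem). The plan is to work with the scalar cross product in $\R^2$, $a\wedge b:=a_1b_2-a_2b_1$, and to reduce both implications to the elementary identities $\nu^i\wedge\nu^j=\pm\sin\theta^k$.

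First I would normalize the configuration. Since $\theta^1,\theta^2,\theta^3\in(0,\pi)$ and $\theta^1+\theta^2+\theta^3=2\pi$, after a rotation (which changes neither side of the claimed equivalence) I may assume that, read counterclockwise, the unit vectors $\nu^1,\nu^2,\nu^3$ have polar angles $0$, $\theta^3$ and $\theta^3+\theta^1$. The relation $\theta^3+\theta^1=2\pi-\theta^2$ then confirms that the angle between $\nu^3$ and $\nu^1$ is exactly $\theta^2$, so this parametrization is compatible with all three prescribed angles. A direct computation gives $\nu^1\wedge\nu^2=\sin\theta^3$, $\nu^2\wedge\nu^3=\sin\theta^1$, $\nu^3\wedge\nu^1=\sin\theta^2$, all strictly positive, and these are the only properties of the $\nu^i$ I will use. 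I also note that any two of the $\nu^i$ are linearly independent, because the angle between them lies strictly between $0$ and $\pi$.

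For the direction \eqref{EquationForceBalanceTripleJunction} $\Rightarrow$ \eqref{EquationYoungsModulus} I would take the cross product of $\sum_{i=1}^3\tilde\alpha^i\nu^i=0$ with each $\nu^j$. Using $\nu^j\wedge\nu^j=0$ and the values above, wedging with $\nu^1$ yields $-\tilde\alpha^2\sin\theta^3+\tilde\alpha^3\sin\theta^2=0$, i.e. $\sin\theta^2/\tilde\alpha^2=\sin\theta^3/\tilde\alpha^3$, while wedging with $\nu^2$ yields $\tilde\alpha^1\sin\theta^3-\tilde\alpha^3\sin\theta^1=0$, i.e. $\sin\theta^1/\tilde\alpha^1=\sin\theta^3/\tilde\alpha^3$; together these are \eqref{EquationYoungsModulus}. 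For the converse, assume \eqref{EquationYoungsModulus} and set $v:=\sum_{i=1}^3\tilde\alpha^i\nu^i$. The same computation gives $v\wedge\nu^1=-\tilde\alpha^2\sin\theta^3+\tilde\alpha^3\sin\theta^2$ and $v\wedge\nu^2=\tilde\alpha^1\sin\theta^3-\tilde\alpha^3\sin\theta^1$, both of which vanish by \eqref{EquationYoungsModulus}. Hence $v$ is parallel to both $\nu^1$ and $\nu^2$, and since these are linearly independent, $v=0$, which is \eqref{EquationForceBalanceTripleJunction}.

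I do not expect a genuine obstacle here: the only point requiring care is to fix, once and for all, the orientation convention (the counterclockwise ordering of the $\nu^i$ and the sign of the scalar cross product) so that the signs of $\nu^i\wedge\nu^j$ are pinned down correctly; everything else is one line of trigonometry. As an alternative one could argue purely geometrically — \eqref{EquationForceBalanceTripleJunction} says the vectors $\tilde\alpha^i\nu^i$ close up into a triangle with side lengths $\tilde\alpha^i$ and interior angles $\pi-\theta^i$, and \eqref{EquationYoungsModulus} is precisely the law of sines for that triangle — but the cross-product argument avoids a separate discussion of the triangle's orientation.
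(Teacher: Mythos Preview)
Your proof is correct and takes a genuinely different route from the paper. The paper argues geometrically: it places $\tilde\alpha^2\nu^2$ and a translate $u$ of $\tilde\alpha^3\nu^3$ so as to form a triangle with third side along $\nu^1$, checks that the interior angles of this triangle are $\pi-\theta^i$, and then invokes the law of sines in both directions. Your argument is purely algebraic via the planar cross product: you compute $\nu^i\wedge\nu^j=\sin\theta^k$ once and for all, and then the two implications are each a one-line wedge computation. What you gain is that there is no picture, no case distinction on which angle is smallest, and no separate check that the auxiliary triangle is nondegenerate or correctly oriented; what the paper's approach buys is perhaps a more vivid geometric interpretation (the force-balance triangle), which you in fact mention at the end as an alternative.

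One small point: strictly speaking the normalization ``after a rotation I may assume the $\nu^i$ are ordered counterclockwise with polar angles $0,\theta^3,\theta^3+\theta^1$'' may also require a reflection, since the hypotheses only prescribe the unsigned pairwise angles and the triple could be arranged clockwise. This is harmless---a reflection flips the sign of every $\nu^i\wedge\nu^j$ simultaneously and leaves both \eqref{EquationYoungsModulus} and \eqref{EquationForceBalanceTripleJunction} invariant---but it is worth saying explicitly.
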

\begin{proof}
	W.l.o.g. assume that $\theta^1$ is the smallest angle. Then due to $\theta^1+\theta^2+\theta^3=2\pi$ we have that $\theta^2,\theta^3\in(\pi/2,\pi)$. We can illustrate the geometrical situation in the following image.
\begin{align*}
	\begin{tikzpicture}[scale=0.5]
	\draw[dashed, thick] (-5, 0) -- (5, 0);
	\draw[->, thick] (0, 0) -- (0, 5) node [right] {$\alpha^1 \nu^1$};
	\draw[->, dotted] (0, 0) -- (0, -5.8) node [right] {$w$};
	\draw[->, thick] (0, 0) -- (-3, -3) node [left] {$\tilde{\alpha}^2 \nu^2$};
	\draw[->, thick] (0, 0) -- (3, -3) node [right] {$\tilde{\alpha}^3 \nu^3$};
	\draw[->] (-3, -3) -- (0, -6) node [left=2pt] {$u$};
	\draw[thick] (0, 1) arc [start angle=90, end angle=-43.5, radius=1cm] node [xshift=5mm, yshift=4mm] {$\theta^2$};
	\draw[thick, gray] (0, 1.5) arc [start angle=90, end angle=0, radius=1.5cm]; 
	\filldraw[fill=gray, draw=gray] (0.5, 0.5) circle (2pt);
	\draw[thick] (0, 1) arc [start angle=90, end angle=222, radius=1cm] node [xshift=-4.5mm, yshift=3.5mm] {$\theta^3$};
	\draw[thick, color=red] (0, -1.28) arc [start angle=280, end angle=200, radius=0.7cm] node [xshift=4.5mm, yshift=0.8mm] {$\bar{\theta}^3$};
	\draw[thick, color=red] (0, -4.6) arc [start angle=90, end angle=150, radius=1cm] node [xshift=5.3mm, yshift=0.8mm] {$\bar{\theta}^2$};
	\draw[thick, color=red] (-2, -2) arc [start angle=22.5, end angle=-40.5, radius=1.7cm] node [xshift=-1mm, yshift=8mm] {$\bar{\theta}^1$};
	\draw[thick] (-1.7, -1.7) arc [start angle=200, end angle=333, radius=2cm] node [xshift=-5mm, yshift=-11mm] {$\theta^1$};
\end{tikzpicture}
\end{align*}
Hereby, $u\in\R^2$ is parallel to $\nu^3$ and such that $w:=\tilde{\alpha}^2\nu^2+u\in\langle\nu^1\rangle$. Observe that $-\nu^1$ will always be in the cone between $\nu^2$ and $\nu^3$ since by assumption $\theta_{i} <\pi$ for $i=1,2,3$.
Thus, $u$ is always a positive multiple of $\nu^3$. 
By construction we have that $\bar{\theta}_{1}=\pi -\theta_{1}$ and $\bar{\theta}_{3}=\pi -\theta_{3}$, so that using that $\sum_{i} \bar{\theta}_{i}=\pi$ and $\sum_{i} \theta_{i}=2\pi$, we obtain $\theta_{2}= \pi -\bar{\theta}_{2}$. It follows 
that $\sin(\theta^i)=\sin(\bar{\theta}^i)$ due to the symmetry of the sine function. 
First assume that $\eqref{EquationForceBalanceTripleJunction}$ holds. Then we have that $w=-\tilde{\alpha}^1\nu^1$ and $u=\tilde{\alpha}^3\nu^3$. Then, \eqref{EquationYoungsModulus} is a consequence of the law of sines. On the other hand if \eqref{EquationYoungsModulus} holds, we can use the law of sines to deduce that $|u|=\tilde{\alpha}^3$ and $|w|=\tilde{\alpha}^1$. Therefore we have that $u=\tilde{\alpha}^3\nu^3$ and $w=-\tilde{\alpha}^1\nu^1$. This implies then \eqref{EquationForceBalanceTripleJunction}.
\end{proof}

\section{Used results}\label{AppendixUsedResults}
\begin{teo}[Implicit function theorem on Banach spaces]\label{TheoremImplicitFunction} \ \\
	Let $X,Y,Z$ be real Banach spaces, $(x_0,y_0)\in X\times Y, \Lambda\times\Omega$ be an open neighborhood of $(x_0,y_0)$ in $X\times Y$ and $F: \Lambda\times\Omega\to Z$ such that
	\begin{itemize}
		\item[1.] $F(x_0,y_0)=0$;
		\item[2.] $\partial_y F$ exists as partial Fr\'echet-derivative on $\Lambda\times\Omega$ and $\partial_y F(x_0,y_0): Y\to Z$ is bijective;
		\item[3.] $F$ and $\partial_y F$ are continuous at $(x_0,y_0)$.
	\end{itemize}
Then there exist positive numbers $r_0$ and $r$ such that $B_{r_0}(x_0)\times B_r(y_0)\subset\Lambda\times\Omega$ and for every $x\in X$ satisfying $\|x-x_0\|_X\le r_0$, there is exactly one $y=y(x)\in Y$ for which $\|y-y_0\|_{Y}\le r$ and $F(x,y)=0$. Moreover,
\begin{itemize}
	\item if $F$ is continuous in a neighborhood of $(x_0,y_0)$, then $y(\cdot)$ is continuous in a neighborhood of $x_0$.
	\item if $F$ is a $C^{m}$-map for $1\le m\le\infty$ on a neighborhood of $(x_0,y_0)$, then $y$ is a $C^{m}$-map on a neighborhood of $x_0$.\\
	\item if $F$ is analytic at $(x_0,y_0)$, then $y$ is analytic at $x_0$.
\end{itemize}
\end{teo}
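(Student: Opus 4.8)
The statement is the classical implicit function theorem on Banach spaces, so the plan is to follow the standard Banach fixed point argument. First I would reduce to the case $x_0=0$, $y_0=0$, $Z=Y$ by replacing $F$ with the map $(x,y)\mapsto [\partial_y F(x_0,y_0)]^{-1}F(x_0+x,y_0+y)$; this is legitimate since $\partial_y F(x_0,y_0)$ is a bounded linear bijection between Banach spaces and hence, by the open mapping theorem, has a bounded inverse. After this normalization we have $F(0,0)=0$ and $\partial_y F(0,0)=\mathrm{Id}_Y$.

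Next I would set up the fixed point scheme. Define $\Psi(x,y):=y-F(x,y)$, so that $F(x,y)=0$ is equivalent to $y=\Psi(x,y)$. The key estimate is that $\Psi(x,\cdot)$ is a contraction in $y$ for $x$ and $y$ near the origin: writing $\Psi(x,y_1)-\Psi(x,y_2)=(y_1-y_2)-(F(x,y_1)-F(x,y_2))$ and using that $\partial_y F$ is continuous at $(0,0)$ with value $\mathrm{Id}_Y$, one gets on a small ball $B_r(0)\subset\Omega$ (shrinking $\Lambda$, $\Omega$ if necessary) the bound $\|\Psi(x,y_1)-\Psi(x,y_2)\|_Y\le \tfrac12\|y_1-y_2\|_Y$. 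Here one should be careful: the hypotheses as stated only give continuity of $F$ and $\partial_y F$ \emph{at} the single point $(0,0)$ and existence of $\partial_y F$ on the neighborhood, so the mean value / fundamental theorem of calculus estimate must be run using $\partial_y F$ along the segment joining $y_1$ to $y_2$ together with the bound $\|\partial_y F(x,y)-\mathrm{Id}_Y\|\le\tfrac12$ valid on a small enough neighborhood — this is the point where one has to be slightly delicate about exactly which continuity is invoked. Then I would use continuity of $F$ at $(0,0)$ (and $F(0,0)=0$) to choose $r_0>0$ so small that $\|x\|_X\le r_0$ forces $\|\Psi(x,0)\|_Y=\|F(x,0)\|_Y\le r/2$; combined with the contraction estimate this shows $\Psi(x,\cdot)$ maps the closed ball $\overline{B_r(0)}$ into itself. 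The Banach fixed point theorem then yields, for each $x$ with $\|x\|_X\le r_0$, a unique $y=y(x)\in\overline{B_r(0)}$ with $F(x,y(x))=0$, which is the main conclusion after undoing the normalization.

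For the three regularity refinements I would argue in increasing strength. Continuity of $y(\cdot)$ follows from the uniform contraction: from $y(x_1)-y(x_2)=\Psi(x_1,y(x_1))-\Psi(x_2,y(x_2))$, split as $(\Psi(x_1,y(x_1))-\Psi(x_1,y(x_2)))+(\Psi(x_1,y(x_2))-\Psi(x_2,y(x_2)))$, bound the first term by $\tfrac12\|y(x_1)-y(x_2)\|_Y$ and absorb it, leaving $\|y(x_1)-y(x_2)\|_Y\le 2\|F(x_1,y(x_2))-F(x_2,y(x_2))\|_Y\to0$ as $x_1\to x_2$ by continuity of $F$. For the $C^m$ statement I would differentiate the identity $F(x,y(x))=0$: the candidate derivative is $Dy(x)=-[\partial_y F(x,y(x))]^{-1}\partial_x F(x,y(x))$, which is well defined since $\partial_y F$ stays invertible near $(0,0)$ (Neumann series), and one checks this is genuinely the Fréchet derivative by the usual estimate using differentiability of $F$; a bootstrap then upgrades $y\in C^k$ to $y\in C^{k+1}$ as long as $F\in C^{k+1}$, giving $C^m$ for finite $m$ and then $C^\infty$. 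For the analytic case I would invoke the standard fact (e.g. via the majorant method, or via complexification and Hartogs' theorem for Banach-space-valued analytic maps as in the reference cited in the excerpt) that the fixed point of an analytically-parametrized contraction depends analytically on the parameter; equivalently, the formal power series for $y$ obtained by repeatedly differentiating $F(x,y(x))=0$ can be shown to converge in a neighborhood of $x_0$.

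\textbf{Main obstacle.} The genuinely substantive point is the contraction estimate under the \emph{weak} hypotheses stated (continuity of $F$ and $\partial_y F$ only at $(0,0)$, mere existence of the partial Fréchet derivative elsewhere): one must verify that $\|\partial_y F(x,y)-\mathrm{Id}_Y\|\le\tfrac12$ on a neighborhood — which is exactly the continuity of $\partial_y F$ at $(0,0)$ — and that the difference $F(x,y_1)-F(x,y_2)$ can be controlled by $\sup\|\partial_y F\|$ along the segment, which requires the segment to lie in the domain (true for a ball) and a Hahn–Banach / fundamental theorem of calculus argument for the Banach-valued map $t\mapsto F(x,ty_1+(1-t)y_2)$. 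Everything else — the fixed point theorem itself, the continuity and differentiability bootstraps — is routine once this estimate is in hand.
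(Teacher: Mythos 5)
The paper does not prove this statement itself but simply cites \cite[Theorem 4.B]{ZeidlerBook1} and \cite[Corollary 4.23]{ZeidlerBook1}, whose proofs are exactly the normalized contraction-mapping argument you outline (including the delicate point that continuity of $\partial_y F$ only at $(x_0,y_0)$ suffices for the uniform bound $\|\partial_y F-\mathrm{Id}\|\le\tfrac12$ feeding the mean value inequality). Your proposal is correct and takes essentially the same route as the cited source.
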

\begin{proof}
	See \cite[Theorem 4.B]{ZeidlerBook1} and \cite[Corollary 4.23]{ZeidlerBook1}.
\end{proof}
In the following all Banach spaces are assumed to be real. Following \cite{FeehannMaridakisLojasiewiczSimon}, let $V$ be a Banach space and $V'$ denote its continuous dual space. We call a bilinear form $b: V \times V \to \R$ definite if $b(x,x) \neq 0$ for all $x \in V \setminus \{0\} $.
We say that a continuous embedding of a Banach space into its continuous dual space, $j : V \to V'$, is definite if the pullback of the canonical pairing, $V \times V \ni (x, y) \mapsto \langle x, j(y) \rangle_{V \times V'} \to \R $, is a definite bilinear form.

	Before we state the next result, we have to define gradient maps (see \cite[Definition 1.5]{FeehannMaridakisLojasiewiczSimon}):
	\begin{defi}[Gradient map]\label{DefinitionGradientMap}\ \\
		Let $U\subset V$ be an open subset of a Banach space $V$ and $W$ a Banach space with continuous embedding $W\subset V'$. A continuous map $\mathcal{M}: U\to W$ is called a gradient map if there exists a $C^1$-function $E: U\to\R$, such that
		\begin{align*}
			E'(x)v=\langle v, \mathcal{M}(x)\rangle_{V\times V'},\quad \forall x\in U, v\in V,
		\end{align*}
		where $\langle\cdot,\cdot\rangle_{V\times V'}$ is the canonical bilinear form on $V\times V'$. The real valued function $E$ is called a potential for the gradient map $\mathcal{M}$.
	\end{defi}
	\begin{teo}[Theorem 2 in \cite{FeehannMaridakisLojasiewiczSimon}]\label{TheoremFeehanMaridakisTheo2}\ \\
		Let $V$ and $W$ be Banach spaces with continuous embedding, $V\subset W\subset V'$, and such that the embedding $V\subset V'$ is definite. Let $U\subset V$ be an open subset, $E: U\to\R$ a $C^2$-function with real analytic gradient map, $\mathcal{M}: U\to W$ and $x_{\ast}\in U$ a critical point of $E$, that is, $\mathcal{M}(x_{\ast})=0$. If $\mathcal{M}'(x_{\ast}): V\to W$ is a Fredholm operator with index zero, then there are constants, $C_{LSI}>0$, $\sigma_{LSI}\in(0,1]$ and $\theta\in(0,\frac{1}{2}]$ with the following significance. If $x\in U$ obeys
		\begin{align*}
			\|x-x_{\ast}\|_V<\sigma_{LSI},
		\end{align*}
		then
		\begin{align*}
		|E(x)-E(x_{\ast})|^{1-\theta}\le C_{LSI}\|\mathcal{M}(x)\|_{W}.
		\end{align*}
	\end{teo}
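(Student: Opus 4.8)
Since Theorem~\ref{TheoremFeehanMaridakisTheo2} reproduces a result of \cite{FeehannMaridakisLojasiewiczSimon}, the plan is to follow the classical Lyapunov--Schmidt reduction, which reduces the infinite--dimensional inequality to the finite--dimensional \L ojasiewicz gradient inequality for real analytic functions. I would first normalise $x_{\ast}=0$, $E(x_{\ast})=0$, and set $L:=\mathcal{M}'(0):V\to W$. Since $L$ is Fredholm of index $0$, $\ker L$ has some finite dimension $m$, $\im L$ is closed of codimension $m$, and I would fix bounded projections $P:V\to\ker L$ (complement $X_{1}$) and $\Pi:W\to\im L$ (complement $Y_{1}$, $\dim Y_{1}=m$). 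The decisive use of the hypothesis that $V\subset V'$ is \emph{definite} occurs here: since the canonical pairing is then nondegenerate on the finite--dimensional $\ker L$, one can choose $Y_{1}\subset W$ so that $\langle v,w\rangle_{V\times V'}=0$ for $v\in X_{1}$, $w\in Y_{1}$, while the pairing is nondegenerate on $\ker L\times Y_{1}$.

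\textbf{Reduced functional.} Next I would solve the ``range'' equation $\Pi\mathcal{M}(\xi+\eta)=0$: since $\partial_{\eta}\Pi\mathcal{M}(0)\big|_{X_1}=\Pi L|_{X_{1}}:X_{1}\to\im L$ is an isomorphism, the analytic version of the implicit function theorem (Theorem~\ref{TheoremImplicitFunction}) gives an analytic $\eta:B_{r}(0)\subset\ker L\to X_{1}$ with $\eta(0)=0$, $\eta'(0)=0$ and $\Pi\mathcal{M}(\xi+\eta(\xi))\equiv 0$. Writing $\bar x(\xi):=\xi+\eta(\xi)$, I would introduce the reduced function $\Psi:B_{r}(0)\to\R$, $\Psi(\xi):=E(\bar x(\xi))$, which is real analytic on $\ker L\cong\R^{m}$ because $E$ is analytic (being a potential for the analytic gradient map $\mathcal{M}$, cf.\ Definition~\ref{DefinitionGradientMap}). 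The key identity to verify is that, for $\zeta\in\ker L$,
\begin{align*}
\Psi'(\xi)\zeta=\langle\zeta+\eta'(\xi)\zeta,\mathcal{M}(\bar x(\xi))\rangle_{V\times V'}=\langle\zeta,\mathcal{M}(\bar x(\xi))\rangle_{V\times V'},
\end{align*}
where the second equality uses $\mathcal{M}(\bar x(\xi))\in Y_{1}$ (because $\Pi\mathcal{M}(\bar x(\xi))=0$) together with $\langle X_{1},Y_{1}\rangle=0$; consequently, by the nondegeneracy of the pairing on $\ker L\times Y_{1}$ and the equivalence of norms in finite dimensions, $\abVal{\nabla\Psi(\xi)}$ is comparable to $\norm{\mathcal{M}(\bar x(\xi))}_{W}$ (for any fixed inner product on $\ker L$).

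\textbf{Transfer and conclusion.} For a general $x=\xi+\eta$ with $\norm{x}_{V}$ small I would compare $x$ with the slice point $\bar x:=\bar x(\xi)$, noting $x-\bar x\in X_{1}$. A Taylor expansion of $\Pi\mathcal{M}$ at $\bar x$ (where $\Pi\mathcal{M}(\bar x)=0$), combined with invertibility of $\Pi L|_{X_{1}}$, should give $\norm{x-\bar x}_{V}\le C\norm{\mathcal{M}(x)}_{W}$ on a small enough ball; then from $E(x)-E(\bar x)=\int_{0}^{1}\langle x-\bar x,\mathcal{M}(\bar x+t(x-\bar x))\rangle_{V\times V'}\,dt$ and $\langle x-\bar x,\mathcal{M}(\bar x)\rangle=0$ one obtains $\abVal{E(x)-E(\bar x)}\le C\norm{x-\bar x}_{V}^{2}\le C\norm{\mathcal{M}(x)}_{W}^{2}$, and similarly $\norm{\mathcal{M}(\bar x)}_{W}\le C\norm{\mathcal{M}(x)}_{W}$. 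Finally I would invoke the classical \L ojasiewicz gradient inequality for the analytic function $\Psi$ at the critical point $\xi=0$: for some $\theta_{0}\in(0,1)$, $C_{0}>0$ one has $\abVal{\Psi(\xi)}^{1-\theta_{0}}\le C_{0}\abVal{\nabla\Psi(\xi)}$ near $0$. Putting $\theta:=\min\{\theta_{0},\tfrac12\}$ and combining,
\begin{align*}
\abVal{E(x)}^{1-\theta}\le C\bigl(\abVal{E(x)-E(\bar x)}^{1-\theta}+\abVal{\Psi(\xi)}^{1-\theta}\bigr)\le C\bigl(\norm{\mathcal{M}(x)}_{W}^{2(1-\theta)}+\norm{\mathcal{M}(x)}_{W}\bigr);
\end{align*}
since $2(1-\theta)\ge 1$, shrinking the radius so that $\norm{\mathcal{M}(x)}_{W}\le 1$ (hence $\sigma_{LSI}\in(0,1]$) absorbs the first term into the second and yields $\abVal{E(x)}^{1-\theta}\le C_{LSI}\norm{\mathcal{M}(x)}_{W}$, which is the claim.

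\textbf{Main obstacle.} The genuinely hard input is the finite--dimensional \L ojasiewicz gradient inequality for real analytic functions, which I would cite as a black box and which is precisely where analyticity (rather than mere $C^{2}$) is indispensable. Within the reduction itself the delicate point is arranging the complement $Y_{1}\subset W$ compatibly with the pairing, so that $\Psi$ is honestly a potential for $\mathcal{M}$ restricted to $Y_{1}$ --- this is the role of the definiteness of $V\subset V'$; the remaining estimates are routine Taylor expansions with uniform remainder control on a sufficiently small $V$--ball, valid because $\mathcal{M}$ is (real analytic, hence) $C^{1}$ there.
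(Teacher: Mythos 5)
Note first that the paper does not prove this theorem at all: it is imported verbatim from Feehan--Maridakis and the ``proof'' consists of a citation. Your sketch reconstructs exactly the Lyapunov--Schmidt strategy used in that reference (and in Chill's earlier work): split off the finite-dimensional kernel, solve the range equation by the analytic implicit function theorem, transfer the estimate from $x$ to the slice point $\bar x(\xi)$, and invoke the classical finite-dimensional \L ojasiewicz inequality for the reduced analytic function $\Psi$. The reduction, the identity $\Psi'(\xi)\zeta=\langle\zeta,\mathcal{M}(\bar x(\xi))\rangle_{V\times V'}$, the comparison estimates $\norm{x-\bar x}_{V}\le C\norm{\mathcal{M}(x)}_{W}$ and $\abVal{E(x)-E(\bar x)}\le C\norm{\mathcal{M}(x)}_{W}^{2}$, and the final absorption using $2(1-\theta)\ge 1$ on a ball where $\norm{\mathcal{M}(x)}_{W}\le 1$ are all correct and standard.

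One step is genuinely under-justified. You attribute the existence of the complement $Y_{1}$ of $\im L$ in $W$ satisfying $\langle X_{1},Y_{1}\rangle=0$ solely to the definiteness of $V\subset V'$. Definiteness only yields that the pairing is nondegenerate on the finite-dimensional $\ker L$, so that $V=\ker L\oplus X_{1}$ with $X_{1}$ the pairing-orthogonal complement, and it forces the candidate $Y_{1}$ to be $\ker L$ itself, viewed inside $W$ via $V\subset W$. But for $\ker L$ to actually complement $\im L$ in $W$ one needs $\ker L\cap\im L=\{0\}$, and this is where the gradient structure enters: since $E$ is $C^{2}$ and $\mathcal{M}$ is its gradient map, $\langle u,Lv\rangle_{V\times V'}=E''(x_{\ast})(u,v)$ is symmetric, hence every $u\in\ker L$ annihilates $\im L$ under the pairing, and definiteness then kills any $w\in\ker L\cap\im L$ via $\langle w,w\rangle_{V\times V'}=0$. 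Without this symmetry the splitting can fail (take $L$ nilpotent and non-symmetric on $\R^{2}$, where $\ker L=\im L$), so the point is not cosmetic; it is the reason the theorem is stated for gradient maps rather than arbitrary analytic Fredholm maps. With that supplement your argument is complete modulo the finite-dimensional \L ojasiewicz inequality, which is legitimately cited as a black box.
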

	\begin{proof}
		See proof of \cite[Theorem 2]{FeehannMaridakisLojasiewiczSimon}.
	\end{proof}


\end{document}